%
\documentclass{ps}
%

\usepackage [utf8]{inputenc}
\usepackage [T1] {fontenc}
\usepackage{xspace} 
\usepackage{graphicx} 
\usepackage{hyperref} 
\usepackage{enumerate}
\usepackage[ruled,vlined,linesnumbered,noresetcount]{algorithm2e} 
\usepackage[toc,page]{appendix} 
\usepackage{geometry}
\geometry{hscale=0.75,vscale=0.80,centering}
\usepackage[numbers]{natbib}
\usepackage[english]{babel}
\usepackage{caption} 
\usepackage{subcaption}

\addtolength{\textwidth}{8pt}

\usepackage{amsthm}
\usepackage{amsmath}
\usepackage{amssymb} 
\usepackage{mathtools} 

\newcommand{\R}{\mathbb{R}}

\newcommand{\N}{\mathbb{N}}
\newcommand{\trans}{^{\top}}

\newcommand{\bs}{\boldsymbol}

\newcommand{\corr}{ \bs{ \Sigma }_{ \bs{ \theta } } }

\newcommand{\norme}{ \| \bs{\mu} \| }
\newcommand{\manqueI}{\bs{\mu}_{-i}}
\newcommand{\normeI}{ \| \bs{\mu}_{-i} \| }
\newcommand{\coIrr}{ \bs{ \Sigma }_{ \bs{ \mu } } }
\newcommand{\coI}{ \bs{ \Sigma }_{ \mu_i } }
\newcommand{\co}{ \bs{ \Sigma }_{ \mu } }
\newcommand{\coIrrI}{\bs{\Sigma}_{\bs{\mu}_{-i}}}
\newcommand{\deIrivee}{ \frac{\partial}{\partial \mu_i } \coIrr }

\newcommand{\corrNouveauAncien}{\bs{\Sigma}_{\bs{\theta},0,\cdot}}
\newcommand{\corrAncienNouveau}{\bs{\Sigma}_{\bs{\theta},\cdot,0}}

\DeclareMathOperator{\Tr}{Tr}

\DeclareMathOperator{\argtanh}{argtanh}

\numberwithin{equation}{section}
\newtheorem{thm}{Theorem}[section]
\newtheorem{cor}[thm]{Corollary}
\newtheorem{prop}[thm]{Proposition}
\newtheorem{lem}[thm]{Lemma}
\newtheorem{defn}[thm]{Definition}

\theoremstyle{remark}
\newtheorem*{rmq}{Remark}

\begin{document}


\title{Optimal compromise between incompatible conditional probability distributions, with application to Objective Bayesian Kriging}

\author{Joseph Muré}

\address{EDF R\&D, Dpt. PRISME, 6 quai Watier, 78401 Chatou, France.  E-mail: \textsf{joseph.mure@edf.fr} }

\secondaddress{ Université Paris Diderot, Laboratoire de Probabilités, Statistique et Modélisation, France.}


\begin{abstract}
Models are often defined through conditional rather than joint distributions, but it can be difficult to check whether the conditional distributions are compatible, i.e. whether there exists a joint probability distribution which generates them. When they are compatible, a Gibbs sampler can be used to sample from this joint distribution. When they are not, the Gibbs sampling algorithm may still be applied, resulting in a ``pseudo-Gibbs sampler''. We show its stationary  probability distribution to be the optimal compromise between the conditional distributions, in the sense that it minimizes a mean squared misfit between them and its own conditional distributions. This allows us to perform Objective Bayesian analysis of correlation parameters in Kriging models by using univariate conditional Jeffreys-rule posterior distributions instead of the widely used multivariate Jeffreys-rule posterior. This strategy makes the full-Bayesian procedure tractable. Numerical examples show it has near-optimal frequentist performance in terms of prediction interval coverage.
\end{abstract}

\begin{resume}
Les modèles statistiques sont souvent définis par lois de probabilité conditionnelles plutôt que jointes. Il peut cependant être difficile de contrôler la compatibilité des lois conditionnelles, i.e. l'existence d'une loi jointe qui les engendre. Quand les lois conditionnelles sont compatibles, un échantillonneur de Gibbs peut être utilisé pour échantillonner selon la loi jointe dont elles procèdent. Dans le cas contraire, l'algorithme de l'échantillonneur de Gibbs peut parfois tout de même être appliqué. Une telle procédure est appelée \flqq{} pseudo-échantillonneur de Gibbs \frqq{}. Nous montrons que sa distribution stationnaire est le compromis optimal entre les lois conditionnelles, au sens où elle minimise un écart quadratique moyen entre elles et ses propres lois conditionnelles. Cela permet une analyse bayésienne objective des paramètres de corrélation de modèles de krigeage : nous utilisons des lois \emph{a posteriori} de Jeffreys univariées conditionnelles plutôt que la très usitée loi \emph{a posteriori} de Jeffreys multivariée. Cette stratégie rend la procédure pleinement bayésienne abordable. Des exemples numériques montrent que ses performances fréquentistes en termes de taux de couverture des intervalles prédictifs sont quasi-optimales.
\end{resume}

\subjclass{Primary 62F15; secondary 62M30, 60G15}

\keywords{Incompatibility, Conditional distribution, Markov kernel, Optimal compromise, Kriging, Reference prior, Integrated likelihood, Gibbs sampling, Posterior propriety, Frequentist coverage.}

\maketitle

\selectlanguage{english}


\section{Introduction}

Generally speaking, there are two ways to create statistical models for multiple random variables. One can either consider them simultaneously and directly define their joint distribution, or one can define a system of conditional distributions. The first approach is conceptually easier and often (but not always) leads to models with well understood properties because a closed-form expression is available. The second one allows for more flexibility in modeling but makes theoretical analysis more difficult. \medskip 

The main problem with the second approach is that conditional distributions may not be compatible. In this context, it means that there exists no joint distribution from which the conditional distributions can all be derived. Other definitions of compatibility exist in the literature. For example, in the context of a model with a given prior distribution, \citet{DL01} examine the problem of eliciting a compatible prior distribution for a submodel. In the domain of Bayesian Networks, a probability distribution can be compatible or not with a given Directed Acyclic Graph (DAG) \citep{RC04}. Moreover, an abstraction (simplification) of a DAG can be compatible or not \citep{YM14}. A concept of compatibility of two prior distributions also exists in the field of Bayesian model selection. It is based on the Kullback-Leibler divergence of the corresponding marginal (predictive) distributions \citep{CMR05}. In this paper however, the notion of compatibility concerns families of conditional distributions. A family of conditional distributions is called compatible if there exists a joint distribution that agrees with them all \citep{HC98}. Uniqueness of this joint distribution is desirable but not included in the requirements of compatibility. \medskip

To illustrate this definition of compatibility, consider the following random-effect model (\citep{HC98}, cited by \citet{Rob06} page 41), which shows that ``in general, reasonable-seeming conditional models will not be compatible with any single joint distribution'' \citep{GR01}. Define $y_{ij}= \beta + u_i + \epsilon_{ij}$, $i \in [\![1,I]\!]$ and $j \in [\![1,J]\!]$ where $u_i \sim \mathcal{N}(0,\sigma^2)$ and $\epsilon_{ij} \sim \mathcal{N}(0,\tau^2)$. The parameters of the model being $(\beta,\sigma^2, \tau^2)$, let us consider the prior distribution $\pi(\beta,\sigma^2,\tau^2) \propto (\sigma^2 \tau^2 )^{-1}$: the corresponding posterior is improper. The conditional posterior distributions cannot therefore be compatible. Nevertheless, they are all proper. With $\bar{y}$ and $\bar{u}$ being the empirical means of $\bs{y} = (y_{ij})_{i \in [\![1,I]\!], j \in [\![1,J]\!]}$ and $\bs{u} = (u_i)_{i \in [\![1,I]\!]}$, $\beta |\bs{u},\bs{y},\sigma^2,\tau^2 \sim \mathcal{N}(\bar{y} - \bar{u}, \tau^2 / IJ)$, $\sigma^2 | \bs{u}, \bs{y}, \beta, \tau^2 \sim \mathcal{IG} \left( I/2, \sum_i u_i^2 / 2 \right)$, $\tau^2 | \bs{u}, \beta, \bs{y}, \sigma^2 \sim \mathcal{IG} \left( IJ/2, \sum_{ij} (y_{ij} - u_i  - \beta)^2 /2 \right)$. If the posterior distribution were presented only through these conditionals, one might miss the fact that Gibbs sampling is impossible in this case due to the corresponding Markov chain being null recurrent.  \medskip

For finite state spaces, null recurrent Markov chains are impossible, but attempting to define a joint probability distribution through its conditional distributions may still lead to incompatibility.
Accordingly, the problem of efficiently determining whether a given system of conditionals is compatible has received considerable attention over the years. \citet{KSJ17}, after listing previous attempts, provide probably the best solution to date. Their idea relies on the Structural Ratio Matrix which contains ratios between conditional distributions. \medskip

However, even if a system contains incompatible conditional probability distributions, it does not follow that it is useless. Since \citet{HCMK00}, practitioners have been using systems of conditional probability distributions without reference to compatibility. 
Indeed, providing the Markov chain is positive recurrent, it is always possible to fire up Gibbs samplers to deal with a system of conditional distributions. Some authors use the colorful acronym PIGS for ``Potentially Incompatible Gibbs Sampler'' to describe such a procedure. When the conditionals are definitely known to be incompatible, the most widely used term seems to be ``Pseudo-Gibbs Sampler'' (PGS). \medskip

Behind the practice of PIGS is the intuition that the Gibbs sampler should converge to the joint distribution that best represents the system of conditionals. \citet{KW17} provide a detailed analysis and geometrical interpretation of the behavior of Pseudo-Gibbs Samplers for discrete conditional distributions. In particular, they show how the scanning order determines its stationary distribution. In Section \ref{Sec:compromis_optimal} of the present paper, we provide some theoretical foundation for the intuition that the stationary distribution of a PGS with random scanning order is, in case of uniqueness, 
the best ``compromise'' between incompatible conditionals. Section \ref{Sec:testing_compatibility} provides further discussion of this theory by considering alterations to the main definitions and showing them to lead to undesirable results. \medskip

In Section \ref{Sec:convergence_Gibbs_algorithm}, we use the theory of optimal compromise to derive Objective Bayesian inference on correlation parameters of Kriging models. Kriging models are widely used in spatial statistics, but they are more complex than standard models. Their parameters are numerous, and their interactions complex, which makes eliciting a joint Objective prior difficult.  This makes an approach resting on conditional priors attractive despite the risk of incompatibility of the associated conditional posteriors. It is to deal with such situations that the theory of optimal compromise was developed. \medskip

The Objective Bayesian paradigm as explained by \citet{Ber06} consists in eliciting for every model a ``default'', reasonable prior distribution that could be used when no explicit prior information is available. In particular, the Berger-Bernardo reference prior \citep{Ber05}, hereafter simply named ``reference prior'', can be algorithmically computed with minimal user intervention. \medskip

For models with a single scalar parameter, the reference prior rewards parameter values that are easily discriminated by the likelihood function.  Its definition 
is related to the Kullback-Leibler divergence between posterior and prior \citep{Ber05}. 
For usual continuous models -- essentially models where the Fisher information matrix is equal to the opposite of the expectancy of the second derivative of the log-likelihood, see \citet{CB94} for an exhaustive list of conditions -- it coincides with the Jeffreys-rule prior. \medskip

For models with multiple parameters, the reference prior algorithm requires the user to specify an ordering on the parameters and then iteratively compute the reference prior on each parameter conditionally to all subsequent parameters.  The only user input is therefore this ordering, and common sense arguments often make one more sensible than others. \citet{YB96} list different reference priors obtained with different parameter orderings for a large number of statistical  models.  Of course, one could also group several parameters and treat them as one single multi-dimensional parameter, but doing so tends to produce less satisfactory inference \citep{BB92}. In particular, for usual continuous models \citet{BBS15} state `` 
  We actually know of no multivariable example in which we would recommend the
Jeffreys-rule prior. In higher dimensions, the prior always seems to be either ‘too diffuse’ [...] or ‘too concentrated’ ''. \medskip

\citet{BDOS01} were the first to derive a reference prior for the parameters of a Gaussian Process regression model. This model contained only one correlation parameter, however. When several correlation parameters are involved, there is no reasonable way to order them. Even if one were arbitrarily picked, computation of the prior would be analytically intractable. Several authors \citep{paulo05,RSH12,KP12,RSS13,GWB18} 
 have therefore resolved to treat all correlation parameters as a single multidimensional parameter. It is in order to avoid having to do this that we make use of PIGS. \medskip
 
The idea is simple: for every correlation parameter, it is possible to analytically derive the reference prior for this parameter conditionally to all others. Each of the corresponding posterior distributions can be seen as a conditional probability distribution on one correlation parameter when all others are known. These conditional distributions then serve as input to a PIGS. \medskip

Theorem \ref{Thm:existence_posterior_gibbs} is the main result with respect to the application.

First, under reasonable assumptions, the PIGS admits one single stationary probability distribution. Second, the Markov kernel defined by the PIGS is uniformly ergodic. Since this Markov kernel is defined over an uncountable state space, the latter fact is significant. The stationary distribution, which we call the Gibbs reference posterior distribution, can be used to improve prediction of the value taken by the Gaussian process at unobserved points. Sections \ref{Sec:comp_MLE_MAP} and \ref{Sec:comp_predictions} illustrate the inferential and predictive performance of the stationary distribution respectively.

\section{Optimal compromise: a general theory} \label{Sec:compromis_optimal}

\subsection{Definitions and notations} \label{Sec:compromis_optimal_def}

In this section we introduce the concepts necessary to define the optimal compromise between potentially incompatible conditional distributions. For the sake of readability, all proofs are provided in Appendix \ref{App:compromis_optimal}. \medskip

First, note that in this context, ``conditional distribution'' is really an informal way of referring to a Markov kernel.

\begin{defn}
Let $(A,\mathcal{A})$ and $(B,\mathcal{B})$ be measurable sets. A mapping $\pi:A \times \mathcal{B} \rightarrow [0,1]$ is called a Markov kernel if:
\begin{enumerate}
\item for all $x \in A$, $\pi(x,\cdot): \mathcal{B} \rightarrow [0,1]$ is a probability distribution and
\item for all $S \in \mathcal{B}$, $\pi(\cdot,S): A \rightarrow [0,1]$ is $\mathcal{A}$-measurable.
\end{enumerate}
We use the following notation: for every $(x,S) \in A \times \mathcal{B}$, $\pi(S|x):=\pi(x,S)$.
\end{defn}

Let $r$ be a positive integer and let $(\Omega_1,\mathcal{A}_1)$,...,$(\Omega_r,\mathcal{A}_r)$ be measurable sets. Define $\Omega = \bigtimes_{i=1}^r \Omega_i = \Omega_1 \times ... \times \Omega_r$ and $\mathcal{A}:= \bigotimes_{i=1}^r \mathcal{A}_r = \mathcal{A}_1 \otimes ... \otimes \mathcal{A}_r$. \medskip

For every $i \in [\!|1,r]\!]$, let $\pi_i$ be a Markov kernel $\left( \bigtimes_{j \neq i} \Omega_j \right) \times \mathcal{A}_i \rightarrow [0,1]$. \medskip

Intuitively (we formalize this below), every $\pi_i$ should be assembled with a distribution $m_{\neq i}$ on $\bigotimes_{j \neq i} \mathcal{A}_j$ to create a ``joint'' distribution, that is a probability distribution on $\mathcal{A}$. We refer to every $m_{\neq i}$ ($i \in [\![1,r]\!])$ as an $(r-1)$-dimensional distribution. If the $m_{\neq i}$ can be chosen in such a way as to make all joint distributions equal, then the Markov kernels in the sequence $(\pi_i)_{i \in [\![1,r]\!]}$ are called compatible. And if no choice of $(m_{\neq i})_{i \in [\![1,r]\!]}$ can make all joint distributions equal, we have to look for a ``compromise'' between the Markov kernels. \medskip

Consider the following example \citep{ACS01} with $r=2$ and $\Omega_1 = \Omega_2 = (0,+\infty)$ are endowed with their Borel $\sigma$-algebra: let $\pi_1$ and $\pi_2$ be Markov kernels such that for all $x, y>0$ $\pi_1(\cdot |y)$ and $\pi_2(\cdot |x)$ are absolutely continuous with respect to the Lebesgue measure on $(0,+\infty)$. Let $\lambda$ be this measure and let the densities of $\pi_1$ and $\pi_2$ respectively be

\begin{align}
p_1(x|y) := \frac{d \pi_1(\cdot|y)}{d \lambda} (x) &= (y+2) \exp(-(y+2)x); \\
p_2(y|x) := \frac{d \pi_2(\cdot|x)}{d \lambda} (y) &= (x+3) \exp(-(x+3)y).
\end{align}

Let us define $m_{\neq 1}$ and $m_{\neq 2}$ as the probability distributions with the following densities with respect to the Lebesgue measure on $(0,+\infty)$:

\begin{align}
\frac{d m_{\neq 1}}{d \lambda} (y) & = \frac{ (y+2)^{-1} \exp(-3y) }{ \int_0^{+ \infty} (t+6)^{-1} \exp(-t) dt}; \\
\frac{d m_{\neq 2}}{d \lambda} (x) & = \frac{ (x+3)^{-1} \exp(-2x) }{ \int_0^{+ \infty} (t+6)^{-1} \exp(-t) dt}.
\end{align}

Then the joint probability distributions $\pi_1 m_{\neq 1}$ and $\pi_2 m_{\neq 2}$ are equal. Now denoting by $\lambda$ the Lebesgue measure on $(0,+\infty) \times (0,+\infty)$, the density of $\pi_1 m_{\neq 1} = \pi_2 m_{\neq 2}$ is

\begin{align}
\frac{d \pi_1 m_{\neq 1}}{d \lambda} (x,y) = \frac{d \pi_2 m_{\neq 2}}{d \lambda} (x,y) = \frac{ \exp(-xy -2x -3y) }{ \int_0^{+\infty} (t+6)^{-1} \exp(-t) dt }.
\end{align}

\begin{rmq}[Producing incompatibility is easy]
Take $r=2$ and let $\Omega_1=\Omega_2$ be a Borel subset of $\R$. Assume that for all $x,y \in \Omega_1$, $\pi_1(\cdot|y)$ and $\pi_2(\cdot|x)$ are absolutely continuous with respect to $\lambda$, which denotes here the Lebesgue measure on $\Omega_1$. Let $p_1(\cdot|y)$ and $p_2(\cdot|x)$ be their respective density functions and further assume that for $\lambda$-almost all real numbers $x$ and $y$, $p_1(x|y)>0$ and $p_2(y|x)>0$. 
A necessary condition \citep{ACS01} for the compatibility of $\pi_1$ and $\pi_2$ can be derived from Bayes' rule: there must exist two mappings $u$ and $v$ defined on $\Omega_1$ such that for $\lambda$-almost all real numbers $x$ and $y$

\begin{equation} \label{Eq:compatible_conditionals}
\left. \frac{d \pi_1(\cdot|y)}{d \lambda} (x)\right/ \frac{d \pi_2(\cdot|x)}{d \lambda} (y)= u(x) v(y).
\end{equation}

In the previous example, $\Omega_1 = \Omega_2 = (0,+\infty)$ and this necessary condition is fulfilled: for all $x,y>0$,

\begin{equation}
\frac{p_1(x|y)}{p_2(y|x)}  = \frac{(x+3)^{-1} \exp(-2x)}{(y+2)^{-1} \exp(-3y)}.
\end{equation}

Taking $p_1(x|y) = (y+2) \exp(-(y+2)x)$ and $p_2(y|x)=\exp(-y)$ makes $\pi_1$ and $\pi_2$ fail the necessary condition for compatibility.
\end{rmq}

\begin{defn}
Let $\phi$ be a probability distribution on $\mathcal{A}$. For every $i \in [\![1,r]\!]$, denote by $\phi_{-i}$ the probability distribution on $\bigotimes_{j \neq i} \mathcal{A}_j$ defined as follows. For every set $S_{-i}$ that can be decomposed as $S_{-i} = \bigtimes_{j \neq i} S_j$ (with $S_j \in \mathcal{A}_j$ for every $j \neq i$),
\begin{equation}
\phi_{-i}(S_{-i}) = \phi(\bigtimes_{j<i} S_j \times \Omega_i \times \bigtimes_{k>i} S_k).
\end{equation}
$\phi_{-i}$ is called the $i$-th $(r-1)$-marginal distribution of $\phi$.
\end{defn}

\begin{rmq}
The above definition is valid because any probability distribution on $\mathcal{A}$ can be characterized by its values on ``rectangles'' $\bigtimes_{i=1}^r S_i$ (where for every $i \in [\![1,r]\!]$, $S_i \in \mathcal{A}_i$).
\end{rmq}

For every $i \in [\![1,r]\!]$ and every probability distribution $m_{\neq i}$ on $\bigotimes_{j \neq i} \mathcal{A}_j$, denote by $\pi_i m_{\neq i}$ the distribution on $\mathcal{A}$ defined as follows. For every $i \in [\![1,r]\!]$, for every set $S_{<i} \in \bigotimes_{j < i} \mathcal{A}_j$, every set $S_{>i} \in \bigotimes_{k > i} \mathcal{A}_k$ and every set $S_i \in \mathcal{A}_i$,
 
\begin{equation}
\pi_i m_{\neq i}(S_{<i} \times S_i \times S_{>i}) = \int_{S_{<i} \times S_{>i}} \pi_i(S_i|\omega_{-i}) d m_{\neq i}(\omega_{-i}).
\end{equation}

Naturally, for $i=1$ (resp. $i=r$), remove $S_{<i}$ (resp. $S_{>i}$) from the formula above. In the following, do this kind of operation when $i=1$ or $i=r$. \medskip

Notice that for every $i \in [\![1,r]\!]$, $m_{\neq i}$ is the $i$-th $(r-1)$-marginal distribution of $\pi_i m_{\neq i}$: 

\begin{equation}
\left(\pi_i m_{\neq i}\right)_{-i} = m_{\neq i}.
\end{equation}

If there exists a sequence of $(r-1)$-dimensional distributions $(m_{\neq i})_{i \in [\![1,r]\!]}$ such that all distributions $\pi_i m_{\neq i}$ are equal, then the Markov kernels $(\pi_i)_{i \in [\![1,r]\!]}$ are compatible. If no such sequence $(m_{\neq i})_{i \in [\![1,r]\!]}$ exists, then we wish to find a sequence $(m_{\neq i})_{i \in [\![1,r]\!]}$ that makes the $\pi_i m_{\neq i}$ share some ``common ground''. The following definition expresses this constraint formally.

\begin{defn} \label{Def:compatibilite}
A sequence of $(r-1)$-dimensional distributions $(m_{\neq i})_{i \in [\![1,r]\!]}$ (each $m_{\neq i}$ being a probability distribution on $\bigotimes_{j \neq i} \mathcal{A}_j$) is said to be \emph{compatible} with the sequence of Markov kernels $(\pi_i)_{i \in [\![1,r]\!]} $ if for every $i \in [\![1,r]\!]$ 

\begin{equation}
 m_{\neq i} = \frac{1}{r} \sum_{j=1}^r (\pi_j m_{\neq j})_{-i}.
\end{equation}

\end{defn}

So the ``common ground'' we require for the sequence of distributions $(\pi_i m_{\neq i})_{i \in [\![1,r]_!]}$ is that their $(r-1)$-marginal distributions should be the same on average. Other constraints would have been possible, and we discuss some of them in Section \ref{Sec:compatibility_discussion} below. \medskip

Consider the case where $r=2$, $\Omega_1 = \Omega_2 = \R$ and for all $x,y \in \R$, $\pi_1(\cdot|y) = \mathcal{N}(y/4,1/8)$ and $\pi_2(\cdot|x) = \mathcal{N}(x,1)$, the second argument of $\mathcal{N}(\cdot,\cdot)$ being the variance. Now define $m_{\neq 1} = \mathcal{N}(0,2)$ and $m_{\neq 2} = \mathcal{N}(0,1)$. We have

\begin{align}
\pi_1 m_{\neq 1} = \mathcal{N} \left(
\begin{pmatrix}
0 \\ 0
\end{pmatrix},
\begin{pmatrix}
1 & 1/2 \\ 1/2 & 2
\end{pmatrix}
\right)
\quad
\mathrm{and}
\quad
\pi_2 m_{\neq 2} = \mathcal{N} \left(
\begin{pmatrix}
0 \\ 0
\end{pmatrix},
\begin{pmatrix}
1 & 1 \\ 1 & 2
\end{pmatrix}
\right)
\end{align}

Since $(\pi_1 m_{\neq 1})_{-2} = \mathcal{N}(0,1) = m_{\neq 2}$ and $(\pi_2 m_{\neq 2})_{-1} = \mathcal{N}(0,2) = m_{\neq 1}$, we have \emph{a fortiori} $m_{\neq 1} = 1/2 (\pi_1 m_{\neq 1})_{-1} + 1/2 (\pi_2 m_{\neq 2})_{-1}$ and $m_{\neq 2} = 1/2 \; (\pi_1 m_{\neq 1})_{-2} + 1/2 \; (\pi_2 m_{\neq 2})_{-2}$, so  $(m_{\neq 1},m_{\neq 2})$ is compatible with $(\pi_1,\pi_2)$ in the sense of Definition \ref{Def:compatibilite}. \medskip

The definition of a compromise follows from this new definition of compatibility.

\begin{defn} \label{Def:compromis}

A probability distribution $P$ on $\mathcal{A}$ is called a \emph{compromise} between the Markov kernels $(\pi_i)_{i \in [\![1,r]\!]} $ if these two conditions are verified:
\begin{enumerate}
\item for every $i \in [\![1,r]\!]$, $\pi_i P_{-i}$ is absolutely continuous with respect to $P$;
\item the sequence $(P_{-i})_{i \in [\![1,r]\!]}$ of $P$'s $(r-1)$-marginal distributions 
is compatible with $(\pi_i)_{i \in [\![1,r]\!]} $.
\end{enumerate}
\end{defn}

In the definition of a compromise, the first condition exists to give meaning to the definition of an optimal compromise below. It is reasonable on its own though: a compromise should not deem events impossible if they are considered possible by the Markov kernels. \medskip

Returning to the previous example, both $\pi_1 m_{\neq 1}$ and $\pi_2 m_{\neq 2}$ are compromises, and any convex combination of these two distributions is one as well. The following definition introduces a cost function for compromises in order to determine which compromises are optimal.

\begin{defn} \label{Def:compromis_optimal}
Let $\lambda$ be a positive measure on $\mathcal{A}$. Let $P$ be a compromise between the sequence of Markov kernels $(\pi_i)_{i \in [\![1,r]\!]} $ that is absolutely continuous with respect to $\lambda$. $P$ is called an \emph{optimal compromise} with respect to $\lambda$ between the sequence of Markov kernels $(\pi_i)_{i \in [\![1,r]\!]} $ if it minimizes the functional $E_\lambda$ over all compromises between $(\pi_i)_{i \in [\![1,r]\!]} $ that are absolutely continuous with respect to $\lambda$. $E_\lambda$ is defined by:

\begin{equation} \label{Eq:energie_compromis}
E_\lambda(P) = \sum_{i=1}^r \int_{\mathcal{A}} \left[ \frac{d (\pi_i P_{-i})}{d \lambda}(\omega) - \frac{dP}{d \lambda}(\omega) \right]^2 d \lambda (\omega).
\end{equation}

\end{defn}

In the previous example, the optimal compromise with respect to the Lebesgue measure on $\R \times \R$ can be shown (cf. Section \ref{Sec:Gibbs_compromise_optimal_compromise}) to be $1/2 \; \pi_1 m_{-1} + 1/2 \; \pi_2 m_{-2}$.

\begin{prop} \label{Prop:set_compromises_convex}
The set of all compromises between $(\pi_i)_{i \in [\![1,r]\!]} $ is convex, as is the subset of all compromises absolutely continuous with respect to $\lambda$. 
\end{prop}

If the Markov kernels $(\pi_i)_{i \in [\![1,r]\!]} $ are compatible and there exists a joint distribution $\pi$ on $\mathcal{A}$ that agrees with them all, then for every positive measure $\lambda$ on $\mathcal{A}$ such that $\pi$ is absolutely continuous with respect to $\lambda$, $E_\lambda(\pi)=0$ and $\pi$ is an optimal compromise with respect to $\lambda$. \medskip

Even though Definition \ref{Def:compromis_optimal} makes it seem like the notion of optimal compromise is tied to a reference measure $\lambda$, it turns out that in many situations there exists a compromise that is optimal with respect to all possible reference measures -- cf. Theorem \ref{Thm:Gibbs_compromis_optimal} below. In the previous example, it is given by $1/2 \pi_1 m_{-1} + 1/2 \pi_2 m_{-2}$.

\subsection{Deriving the optimal compromise} \label{Sec:Gibbs_compromise_optimal_compromise}

The concepts of compromise and optimal compromise defined in Section \ref{Sec:compromis_optimal_def} are intimately linked to Gibbs sampling, or (because the Markov kernels are incompatible) pseudo-Gibbs sampling (PGS). Let us therefore recall the Gibbs sampling algorithm.

\begin{algorithm}[H]
\SetAlgoLined
\SetKwInOut{Input}{input}
\Input{Variable $x_i$ for $i \in [\![1,r]\!]$ and Markov kernels $\pi_i$ for $i \in [\![1,r]\!]$}

Initialize $x_1$,...,$x_r$ \;

\While{additional samples are desired}{
Select index $i$ from $[\![1,r]\!]$\;
Sample $x_i$ from $\pi_i(\cdot | x_1,...,x_{i-1},x_{i+1},...,x_r)$ \;
}
 \caption{Gibbs sampling}
\end{algorithm}

The method used to sample the index $i$ is called \emph{scanning order}. A \emph{systematic} scan moves through each index in turn using a deterministic pattern while an \emph{equiprobable random} scan selects for each step an index within $[\![1,r]\!]$ with probability $1/r$. \medskip

If the Markov kernels $(\pi_i)_{i \in [\![1,r]\!]}$ are compatible, then the scanning order influences only the rate of convergence of the algorithm. \citet{HDSMR16} provide theoretical results on the subject and \citet{MM17} propose a measure for the quality of any scan order for Gibbs sampling on finite state spaces. \medskip

If the Markov kernels are not compatible, then every scan order may produce a different target distribution. \citet{KW17} thoroughly study the links between all possible systematic scan orders for Markov kernels on finite state spaces. The equiprobable random scan order has yet another target distribution. \medskip

The notion of Gibbs compromise is central to this theory of compromises between incompatible Markov kernels.

\begin{defn}
A probability distribution $P_G$ on $\mathcal{A}$ is called a \emph{Gibbs compromise} between the sequence of Markov kernels $(\pi_i)_{i \in [\![1,r]\!]}$ if it satisfies:
\begin{equation}
P_G = \frac{1}{r} \sum_{i=1}^r \pi_i (P_G)_{-i}.
\end{equation}
\end{defn}

If it exists, a Gibbs compromise is a stationary distribution for the Gibbs sampler with equiprobable random scan order. This fact makes it practical from a sampling standpoint. In the following, we show how it relates to the concepts of compromise and optimal compromise in the sense of Definitions \ref{Def:compromis} and \ref{Def:compromis_optimal}.

\begin{prop} \label{Prop:Gibbs_compromise_is_compromise}
A Gibbs compromise between the sequence of Markov kernels $(\pi_i)_{i \in [\![1,r]\!]}$ is also a compromise between this sequence of Markov kernels in the sense of Definition \ref{Def:compromis}.
\end{prop}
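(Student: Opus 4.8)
The plan is to take $P = P_G$ in Definition \ref{Def:compromis} and verify its two requirements directly from the defining identity
\begin{equation*}
P_G = \frac{1}{r} \sum_{j=1}^r \pi_j (P_G)_{-j},
\end{equation*}
using only the linearity of the $(r-1)$-marginal operator $\phi \mapsto \phi_{-i}$ and the fact that a term of a finite convex combination of probability distributions is absolutely continuous with respect to the whole combination. No delicate analysis is needed; the work is essentially bookkeeping.

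First I would dispatch the second (compatibility) condition. The assignment $\phi \mapsto \phi_{-i}$ is linear in $\phi$: for a fixed rectangle $S_{-i} = \bigtimes_{j \neq i} S_j$, the value $\phi_{-i}(S_{-i}) = \phi(\bigtimes_{j<i} S_j \times \Omega_i \times \bigtimes_{k>i} S_k)$ depends linearly on $\phi$, and by the Remark following the definition of the $(r-1)$-marginal a distribution is determined by its values on such rectangles. Applying $(\cdot)_{-i}$ to both sides of the Gibbs-compromise identity therefore yields, for every $i \in [\![1,r]\!]$,
\begin{equation*}
(P_G)_{-i} = \frac{1}{r} \sum_{j=1}^r \big(\pi_j (P_G)_{-j}\big)_{-i},
\end{equation*}
which is exactly the condition of Definition \ref{Def:compatibilite} stating that $\big((P_G)_{-i}\big)_{i \in [\![1,r]\!]}$ is compatible with $(\pi_i)_{i \in [\![1,r]\!]}$.

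Then I would check the first (absolute continuity) condition. Fix $i \in [\![1,r]\!]$ and let $S \in \mathcal{A}$ satisfy $P_G(S) = 0$. Evaluating the defining identity on $S$ gives $\frac{1}{r} \sum_{j=1}^r \pi_j(P_G)_{-j}(S) = 0$; since every summand is nonnegative, each $\pi_j(P_G)_{-j}(S)$ vanishes, and in particular $\pi_i(P_G)_{-i}(S) = 0$. Hence $\pi_i(P_G)_{-i} \ll P_G$, which is condition one. This completes the verification that $P_G$ is a compromise. There is no real obstacle here; the only point deserving care is the justification of linearity of the marginal operator, i.e. that passing the convex combination through $(\cdot)_{-i}$ is legitimate, which is precisely what the characterization of measures by their values on rectangles guarantees.
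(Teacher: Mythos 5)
Your argument is correct and coincides with the paper's own proof: both conditions are verified directly from the identity $P_G = \frac{1}{r}\sum_j \pi_j (P_G)_{-j}$, using nonnegativity of the summands for absolute continuity and linearity of the marginal operator for compatibility. The only difference is cosmetic (order of the two checks and the explicit remark on why the marginal passes through the convex combination).
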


The denomination ``Gibbs compromise'' is justified because it is a stationary distribution for the Gibbs sampler with random equiprobable scanning order. \medskip 

The proposition below shows that all compromises are tied to Gibbs compromises.

\begin{prop} \label{Prop:hypermarginales_fixes}
If a sequence of $(r-1)$-dimensional probability distributions $(m_{\neq i})_{i \in [\![1,r]\!]}$ (each $m_{\neq i}$ being a probability distribution on $\bigotimes_{j \neq i} \mathcal{A}_j$) is compatible with the sequence of Markov kernels $(\pi_i)_{i \in [\![1,r]\!]}$, then it is the sequence of $(r-1)$-dimensional distributions of a Gibbs compromise between the Markov kernels $(\pi_i)_{i \in [\![1,r]\!]}$.
\end{prop}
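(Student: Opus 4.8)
The plan is to exhibit an explicit Gibbs compromise whose sequence of $(r-1)$-marginals is precisely the given compatible sequence $(m_{\neq i})_{i \in [\![1,r]\!]}$. The natural candidate is
\begin{equation}
P_G := \frac{1}{r} \sum_{j=1}^r \pi_j m_{\neq j}.
\end{equation}
Since each $\pi_j m_{\neq j}$ is a probability distribution on $\mathcal{A}$ and the coefficients $1/r$ sum to one, $P_G$ is itself a probability distribution on $\mathcal{A}$, so the candidate is well defined.

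First I would establish that the marginalization operator $\phi \mapsto \phi_{-i}$ commutes with convex combinations. This follows from the earlier remark that a probability distribution on $\mathcal{A}$ is characterized by its values on rectangles $\bigtimes_{j} S_j$: evaluating a convex combination of distributions on the rectangle $\bigtimes_{j<i} S_j \times \Omega_i \times \bigtimes_{k>i} S_k$ returns the corresponding convex combination of the individual marginal values. Applying this to $P_G$ yields $(P_G)_{-i} = \frac{1}{r} \sum_{j=1}^r (\pi_j m_{\neq j})_{-i}$ for every $i \in [\![1,r]\!]$.

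The compatibility hypothesis of Definition \ref{Def:compatibilite}, namely $m_{\neq i} = \frac{1}{r} \sum_{j=1}^r (\pi_j m_{\neq j})_{-i}$, then identifies the right-hand side with $m_{\neq i}$, so that $(P_G)_{-i} = m_{\neq i}$ for every $i$. This simultaneously shows that the sequence of $(r-1)$-marginals of $P_G$ is the given sequence, and, substituting back, that $\frac{1}{r} \sum_{i=1}^r \pi_i (P_G)_{-i} = \frac{1}{r} \sum_{i=1}^r \pi_i m_{\neq i} = P_G$, which is exactly the defining equation of a Gibbs compromise. I expect no genuine obstacle here: the content of the argument is carried entirely by the construction of $P_G$ and by the linearity of marginalization, the only point meriting care being the justification that marginalization respects convex combinations, which reduces to the characterization of measures by their values on rectangles.
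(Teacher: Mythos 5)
Your proof is correct and follows exactly the paper's own argument: it defines the same candidate $P = \frac{1}{r}\sum_{j=1}^r \pi_j m_{\neq j}$, uses linearity of marginalization together with the compatibility hypothesis to get $P_{-i} = m_{\neq i}$, and substitutes back to obtain the Gibbs compromise equation. The extra care you take in justifying that marginalization respects convex combinations via the characterization on rectangles is a welcome but minor elaboration of a step the paper leaves implicit.
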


\begin{rmq}[Equivalence relation and convexity]
Let us say that two compromises are equivalent if they share the same sequence of $(r-1)$-marginal distributions. This is obviously an equivalence relation and every class of equivalence can be represented by a single Gibbs compromise. Moreover, each class of equivalence is a convex subset of the set of all compromises. And for any positive measure $\lambda$ on $\mathcal{A}$, its intersection with the set of all compromises absolutely continuous with respect to $\lambda$ is also convex. Finally, the functional $E_\lambda$ is convex over this intersection.
\end{rmq}

\subsection{A theoretical justification of Pseudo-Gibbs sampling}

At this stage, all necessary tools are available to derive the two most important results of this theory of compromise between incompatible Markov kernels.

\begin{thm} \label{Thm:Gibbs_compromis_optimal}
If there exists a unique Gibbs compromise $P_G$ between the sequence of Markov kernels $(\pi_i)_{i \in [\![1,r]\!]}$, then the following statements holds:
\begin{enumerate}
\item $P_G$ is absolutely continuous with respect to any compromise between the sequence of Markov kernels $(\pi_i)_{i \in [\![1,r]\!]}$;
\item for any positive measure $\lambda$ on $\mathcal{A}$ such that $P_G$ is absolutely continuous with respect to $\lambda$, $P_G$ is the unique optimal compromise with respect to $\lambda$.
\end{enumerate}
Because of these two properties, we call $P_G$ \emph{the} optimal compromise.
\end{thm}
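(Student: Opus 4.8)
The plan is to use the uniqueness hypothesis to collapse the entire set of compromises onto a single equivalence class (in the sense of the Remark following Proposition \ref{Prop:hypermarginales_fixes}), after which minimizing $E_\lambda$ reduces to the elementary fact that an average minimizes a sum of squared $L^2$-distances.

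The decisive first step is to show that \emph{every} compromise has the same family of $(r-1)$-marginals as $P_G$. Let $P$ be any compromise. By the second requirement of Definition \ref{Def:compromis}, the sequence $(P_{-i})_{i \in [\![1,r]\!]}$ is compatible with $(\pi_i)_{i \in [\![1,r]\!]}$. Proposition \ref{Prop:hypermarginales_fixes} then yields a Gibbs compromise whose sequence of $(r-1)$-marginals is exactly $(P_{-i})_{i}$; since the Gibbs compromise is assumed unique, this must be $P_G$, so $P_{-i} = (P_G)_{-i}$ for every $i$, and hence $\pi_i P_{-i} = \pi_i (P_G)_{-i}$. With this in hand, part (1) is immediate: the Gibbs equation gives $P_G = \frac{1}{r}\sum_{i=1}^r \pi_i (P_G)_{-i} = \frac{1}{r}\sum_{i=1}^r \pi_i P_{-i}$, and by the first requirement of Definition \ref{Def:compromis} each $\pi_i P_{-i}$ is absolutely continuous with respect to $P$, so their average $P_G$ is too.

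For part (2), fix a positive measure $\lambda$ with $P_G \ll \lambda$ and let $P$ be any compromise with $P \ll \lambda$. Then $\pi_i P_{-i} \ll P \ll \lambda$, so all the relevant Radon--Nikodym derivatives exist. Writing $p = dP/d\lambda$, $p_G = dP_G/d\lambda$, and $q_i = d(\pi_i P_{-i})/d\lambda = d(\pi_i (P_G)_{-i})/d\lambda$ (the last equality by the first step), the Gibbs equation reads $\frac{1}{r}\sum_i q_i = p_G$ $\lambda$-a.e., i.e. $\sum_i (q_i - p_G) = 0$. Expanding each integrand of \eqref{Eq:energie_compromis} as $q_i - p = (q_i - p_G) + (p_G - p)$ and summing over $i$, the cross term $2\int_{\mathcal{A}} (p_G - p)\sum_i (q_i - p_G)\, d\lambda$ vanishes, leaving
\begin{equation}
E_\lambda(P) = E_\lambda(P_G) + r \int_{\mathcal{A}} (p_G - p)^2 \, d\lambda .
\end{equation}
The added term is nonnegative, so $P_G$ minimizes $E_\lambda$; and equality forces $\int_{\mathcal{A}} (p_G - p)^2\, d\lambda = 0$, i.e. $p = p_G$ $\lambda$-a.e., so $P = P_G$, giving uniqueness. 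Since $\lambda$ was an arbitrary admissible measure, the same $P_G$ is optimal for all of them, justifying the name \emph{the} optimal compromise.

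I expect the only genuinely delicate point to be this first step: converting uniqueness of the Gibbs compromise into equality of the $(r-1)$-marginals of \emph{all} compromises through Proposition \ref{Prop:hypermarginales_fixes}. Everything afterward is the standard ``mean minimizes sum of squared distances'' argument, whose crux is the vanishing of the cross term, guaranteed precisely by the Gibbs-compromise identity $\sum_i q_i = r\,p_G$. One caveat worth flagging is the degenerate case $E_\lambda(P_G) = +\infty$: the displayed identity still delivers minimality, but the uniqueness assertion should then be read modulo finiteness of the cost, which holds in the applications of interest.
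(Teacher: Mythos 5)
Your proposal is correct and follows essentially the same route as the paper: uniqueness of the Gibbs compromise plus Proposition \ref{Prop:hypermarginales_fixes} forces every compromise to share $P_G$'s $(r-1)$-marginals, absolute continuity then follows from the Gibbs identity, and optimality reduces to the fact that the arithmetic mean minimizes the sum of squared deviations. Your explicit cross-term (Pythagorean) decomposition is just a sharper way of writing the pointwise inequality the paper invokes, and your caveat about the case $E_\lambda(P_G)=+\infty$ is a fair observation that applies equally to the paper's own argument.
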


\begin{rmq}[Equivalence class and convexity -- continued]
The arguments used in the proof of Theorem \ref{Thm:Gibbs_compromis_optimal} (cf. Appendix \ref{App:compromis_optimal}) can also be used to deal with the case where there exist several different Gibbs compromises. First, one can show that each Gibbs compromise is absolutely continuous with respect to any equivalent compromise. Now let $\mathcal{C}$ be an equivalence class and let $\pi_\mathcal{C}$ be the unique Gibbs compromise in this class of equivalence. Let $\lambda$ be a positive measure on $\mathcal{A}$ such that $\pi_\mathcal{C}$ is absolutely continuous with respect to $\lambda$. $\pi_C$ (uniquely) minimizes $E_\lambda$ over the intersection of $\mathcal{C}$ and the set of all compromises absolutely continuous with respect to $\lambda$. This implies that for any positive measure $\lambda$ on $\mathcal{A}$, any optimal compromise with respect to $\lambda$ is a Gibbs compromise. Finally, note that the set of all Gibbs compromises is convex (however there is no reason $E_\lambda$ should be convex over this set!).
\end{rmq}

Theorem \ref{Thm:Gibbs_compromis_optimal} has important practical implications. It opens the possibility of using Gibbs sampling to find the optimal compromise between incompatible Markov kernels and justifies using PIGS. \medskip

The next result shows that, under the conditions of Theorem \ref{Thm:Gibbs_compromis_optimal}, the optimal compromise remains the same for a fairly large class of reparametrizations. This result is key to the application of PIGS in an Objective Bayesian framework, where some degree of invariance by reparametrization of priors and posteriors is usually expected. \medskip

For every $i \in [\![1,r]\!]$, let $(\tilde{\Omega}_i,\tilde{\mathcal{A}}_i)$ be a measurable space and let $f_i$ be bijective measurable mapping $\Omega_i \rightarrow \tilde{\Omega}_i$ whose inverse $f_i^{-1}$ is also measurable. Define $f = (f_1,...,f_r) : \bigtimes_{i \in [\![1,r]\!]} \Omega_i \rightarrow \bigtimes_{i \in [\![1,r]\!]} \tilde{\Omega}_i$ and for every $i \in [\![1,r]\!]$ $f_{-i} = (f_1,..,f_{i-1},f_{i+1},...,f_r) : \bigtimes_{j \neq i} \Omega_j \rightarrow \bigtimes_{j \neq i} \tilde{\Omega}_j$. \medskip

Also let $\tilde{\pi}_i$ be the Markov kernel $\left( \bigtimes_{j \neq i} \tilde{\Omega}_j \right) \times \tilde{\mathcal{A}}_i \rightarrow [0,1]$ such that for every $\omega_{-i} \in \bigtimes_{j \neq i} \Omega_j$ and every $S_i \in \mathcal{A}_i$, $\tilde{\pi}_i(f_i(S_i) | f_{-i}(\omega_{-i})) = \pi_i(S_i|\omega_{-i})$.

\begin{prop} \label{Prop:invariant_reparametrisation}
 Assume there exists a unique Gibbs compromise $P_G$ between the sequence of Markov kernels $(\pi_i)_{i \in [\![1,r]\!]}$. Then the push-forward measure of $P_G$ by $f$ $\tilde{P}_G := P_G \ast f$ is the unique Gibbs compromise between the sequence of Markov kernels $(\tilde{\pi}_i)_{i \in [\![1,r]\!]}$.
\end{prop}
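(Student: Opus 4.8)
The plan is to reduce the statement to two commutation identities describing how the push-forward interacts with the two operations appearing in the Gibbs-compromise fixed-point equation, and then to exploit the linearity of the push-forward together with the symmetry of the construction under exchanging $f$ and $f^{-1}$. The first identity concerns marginalization: since $f = (f_1,\dots,f_r)$ acts coordinatewise, so does $f^{-1}$, whence for any rectangle $f^{-1}(\bigtimes_{j<i}\tilde{S}_j \times \tilde{\Omega}_i \times \bigtimes_{k>i}\tilde{S}_k) = \bigtimes_{j<i} f_j^{-1}(\tilde{S}_j) \times \Omega_i \times \bigtimes_{k>i} f_k^{-1}(\tilde{S}_k)$. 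Evaluating the push-forward of any distribution $\phi$ on $\mathcal{A}$ on such a rectangle and comparing with the definition of the $(r-1)$-marginal yields
\begin{equation}
(\phi \ast f)_{-i} = \phi_{-i} \ast f_{-i};
\end{equation}
that is, the push-forward commutes with $(r-1)$-marginalization.

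Next I would prove the analogous statement for the assembly: for any $(r-1)$-dimensional distribution $m_{\neq i}$,
\begin{equation}
\tilde{\pi}_i \left( m_{\neq i} \ast f_{-i} \right) = (\pi_i m_{\neq i}) \ast f.
\end{equation}
This is where the defining relation $\tilde{\pi}_i(f_i(S_i)|f_{-i}(\omega_{-i})) = \pi_i(S_i|\omega_{-i})$ enters, and it is the main obstacle of the proof. I would evaluate the left-hand side on an image rectangle $f_{<i}(S_{<i}) \times f_i(S_i) \times f_{>i}(S_{>i})$, expand it via the definition of the assembled measure, and change variables $\tilde{\omega}_{-i} = f_{-i}(\omega_{-i})$ in the integral against $m_{\neq i} \ast f_{-i}$ (legitimate because $f_{-i}$ is a measurable bijection with measurable inverse). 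The defining relation then turns the integrand into $\pi_i(S_i|\omega_{-i})$ and the integral into $\pi_i m_{\neq i}(S_{<i} \times S_i \times S_{>i})$, which is exactly $(\pi_i m_{\neq i}) \ast f$ evaluated on the same image rectangle. Since a probability measure is determined by its values on rectangles (the remark following the marginal definition), both identities extend from rectangles to all of $\tilde{\mathcal{A}}$. The delicate part is correctly tracking the three blocks of coordinates $<i$, $=i$, $>i$ through both the kernel's defining relation and the push-forward simultaneously.

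With these identities in hand, existence is immediate. Applying the first identity to $P_G$ gives $(\tilde{P}_G)_{-i} = (P_G)_{-i} \ast f_{-i}$, and the second then gives $\tilde{\pi}_i (\tilde{P}_G)_{-i} = (\pi_i (P_G)_{-i}) \ast f$. Averaging over $i$ and using the linearity of the push-forward,
\begin{equation}
\frac{1}{r}\sum_{i=1}^r \tilde{\pi}_i (\tilde{P}_G)_{-i} = \left( \frac{1}{r}\sum_{i=1}^r \pi_i (P_G)_{-i} \right) \ast f = P_G \ast f = \tilde{P}_G,
\end{equation}
where the penultimate equality is the Gibbs-compromise equation for $P_G$. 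Hence $\tilde{P}_G$ is a Gibbs compromise between $(\tilde{\pi}_i)_{i \in [\![1,r]\!]}$.

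For uniqueness I would observe that the setup is symmetric under the substitution $f \leftrightarrow f^{-1}$, $\pi_i \leftrightarrow \tilde{\pi}_i$: rewriting the defining relation as $\pi_i(f_i^{-1}(\tilde{S}_i)|f_{-i}^{-1}(\tilde{\omega}_{-i})) = \tilde{\pi}_i(\tilde{S}_i|\tilde{\omega}_{-i})$ shows that $(\pi_i)$ is obtained from $(\tilde{\pi}_i)$ by the reparametrization $f^{-1}$ exactly as $(\tilde{\pi}_i)$ is from $(\pi_i)$ by $f$. Given any Gibbs compromise $\tilde{Q}$ between $(\tilde{\pi}_i)_{i \in [\![1,r]\!]}$, the existence argument run in this reversed direction shows that $\tilde{Q} \ast f^{-1}$ is a Gibbs compromise between $(\pi_i)_{i \in [\![1,r]\!]}$; by the assumed uniqueness of $P_G$ it must equal $P_G$, and pushing forward by $f$ while using $(\tilde{Q} \ast f^{-1}) \ast f = \tilde{Q}$ yields $\tilde{Q} = P_G \ast f = \tilde{P}_G$. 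This establishes that $\tilde{P}_G$ is the unique Gibbs compromise between $(\tilde{\pi}_i)_{i \in [\![1,r]\!]}$.
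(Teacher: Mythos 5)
Your proof is correct and follows essentially the same route as the paper's: the same change of variables via the defining relation $\tilde{\pi}_i(f_i(S_i)\,|\,f_{-i}(\omega_{-i}))=\pi_i(S_i\,|\,\omega_{-i})$, the same identity $(P_G)_{-i}\ast f_{-i}=(P_G\ast f)_{-i}$, and the same uniqueness argument by pulling any Gibbs compromise for $(\tilde{\pi}_i)_i$ back through $f^{-1}$. The only difference is presentational — you factor the computation into two standalone commutation identities before assembling them, whereas the paper carries out the single chain of equalities directly on $P_G$.
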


\section{Testing the definitions of compatibility} \label{Sec:testing_compatibility}

While the previous section presented the theory of compromise, this section aims to test its foundations, namely Definitions \ref{Def:compatibilite} and \ref{Def:compromis}. Subsection \ref{Sec:compatibility_discussion} shows that strengthening their requirements is not possible since it would in many cases threaten the very existence of a compromise. Subsection \ref{Sec:compromise_examples} on the other hand shows that these requirements cannot be weakened at a small price.

\subsection{Stronger definitions of compromises are not possible} \label{Sec:compatibility_discussion}

While the definition of the \emph{optimal compromise} is straightforward, as it involves minimizing some measure of distance between the ``targeted'' conditionals and the conditionals of the compromise, the definition of \emph{a compromise} may seem arbitrary. To motivate this definition, let us focus on the two-dimensional case. \medskip

Suppose that $r=2$ and that $\pi_1$ and $\pi_2$ are incompatible. This means there exists no joint distribution $\pi$ which agrees with both Markov kernels. This being the case, it seems sensible to weaken the definition of compatibility by applying it to the ``marginals'' instead of the ``joint'' distribution. The following definition makes this idea precise.

\begin{defn} \label{Def:compatibilite_2d}
A pair of probability distributions $m_{\neq 1}$ (resp. $m_{\neq 2}$) on $\mathcal{A}_2$ (resp. $\mathcal{A}_1$) is \emph{compatible} with the pair of Markov kernels $\pi_1$ and $\pi_2$ if the distributions $\pi_1 m_{\neq 1}$ and $\pi_2 m_{\neq 2}$ verify
\begin{align}
\left( \pi_1 m_{\neq 1} \right)_{-2} = m_{\neq 2} \emph{ and} \left( \pi_2 m_{\neq 2} \right)_{-1} = m_{\neq 1}.
\end{align}
\end{defn}

While this definition may seem more restrictive at first glance than Definition \ref{Def:compatibilite}, both definitions are in fact equivalent when applied to a pair of Markov kernels, because $(r-1)$-dimensional distributions are simply $1$-dimensional distributions in this case. Indeed, following directly from Definition \ref{Def:compatibilite}, we have this result which holds for any $r$:

\begin{prop} \label{Prop:marginales_identiques}
If a sequence of $(r-1)$-dimensional probability distributions $(m_{\neq i})_{i \in [\![1,r]\!]} $ (each $m_{\neq i}$ being a probability distribution on $\bigotimes_{j \neq i} \mathcal{A}_j$) is compatible (in the sense of Definition \ref{Def:compatibilite}) with the sequence of Markov kernels $(\pi_i)_{i \in [\![1,r]\!]} $, then all joint distributions in the sequence $(\pi_i m_{\neq i})_{i \in [\![1,r]\!]} $ share the same marginals, that is

\begin{equation} \label{Eq:weak_compatibility}
\forall i,j,k \in [\![1,r]\!],  \quad \forall S_k \in \mathcal{A}_k, \quad \pi_i m_{\neq i} \left( \bigtimes_{k'<k} \Omega_{k'} \times S_k \times \bigtimes_{k''>k} \Omega_{k''} \right) =  \pi_j m_{\neq j} \left( \bigtimes_{k'<k} \Omega_{k'} \times S_k \times \bigtimes_{k''>k} \Omega_{k''} \right).
\end{equation}
\end{prop}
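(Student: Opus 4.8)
The plan is to reduce the statement, which is about one-dimensional marginals, to the compatibility equations of Definition~\ref{Def:compatibilite}, which are about $(r-1)$-marginals. For a probability distribution $\phi$ on $\mathcal{A}$ and an index $k$, write $\phi^{(k)}$ for its one-dimensional marginal onto the $k$-th coordinate, so that $\phi^{(k)}(S_k) = \phi\left( \bigtimes_{k'<k}\Omega_{k'} \times S_k \times \bigtimes_{k''>k}\Omega_{k''}\right)$; this is well defined on rectangles, which is all we need. In this notation \eqref{Eq:weak_compatibility} says precisely that, for each fixed $k$, the distribution $(\pi_i m_{\neq i})^{(k)}$ does not depend on $i$. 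If $r=1$ there is a single joint distribution and nothing to prove, so I would assume $r \geq 2$.

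The one elementary fact I would isolate first is that a one-dimensional marginal survives the $(r-1)$-marginal operator as long as the discarded coordinate is a different one: for any $\phi$ on $\mathcal{A}$ and any $l \neq k$ one has $\left(\phi_{-l}\right)^{(k)} = \phi^{(k)}$. This is immediate on rectangles, since forming $\phi_{-l}$ replaces the $l$-th factor by $\Omega_l$, and then taking the $k$-marginal (with $k \neq l$) replaces all remaining factors except the $k$-th by the full spaces, yielding exactly the rectangle that computes $\phi^{(k)}$. With this in hand, I would fix $k$, pick any $l \neq k$ (possible because $r \geq 2$), and apply the compatibility equation for the index $l$, namely $m_{\neq l} = \frac{1}{r}\sum_{j=1}^r (\pi_j m_{\neq j})_{-l}$. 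Taking the $k$-marginal of both sides and using the elementary fact term by term (each term carries the subscript $-l$ with $l \neq k$) gives $m_{\neq l}^{(k)} = \frac{1}{r}\sum_{j=1}^r (\pi_j m_{\neq j})^{(k)} =: c_k$, a quantity that does not depend on $l$. Combining this with the identity $(\pi_l m_{\neq l})_{-l} = m_{\neq l}$ recorded before Definition~\ref{Def:compatibilite}, and applying the elementary fact once more, I obtain $(\pi_l m_{\neq l})^{(k)} = m_{\neq l}^{(k)} = c_k$ for every $l \neq k$.

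It remains to cover the single index $i = k$, and this is the only delicate point: $m_{\neq k}$ lives on the coordinates other than $k$, so it carries no $k$-marginal that I could feed into the argument above. I would close this gap algebraically rather than by a new marginalization. The sum defining $c_k$ has $r$ terms; the $r-1$ of them with $j \neq k$ have just been shown to equal $c_k$, and since their average together with the remaining term equals $c_k$ as well, the remaining term $(\pi_k m_{\neq k})^{(k)}$ is forced to equal $c_k$. Therefore $(\pi_i m_{\neq i})^{(k)} = c_k$ for all $i$, which is exactly \eqref{Eq:weak_compatibility}. The main obstacle is thus not any analytic estimate but the bookkeeping of which coordinate is being marginalized in which product space; in particular the case $i=k$ does not yield to the direct route and has to be extracted from the averaging structure built into Definition~\ref{Def:compatibilite}.
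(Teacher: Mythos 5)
Your proof is correct. The paper does not actually supply an argument for Proposition \ref{Prop:marginales_identiques}: it is introduced with the phrase ``following directly from Definition \ref{Def:compatibilite}'' and no proof appears in the appendix, so there is no official route to compare yours against; your write-up fills in the details that the paper treats as immediate. The two ingredients you isolate are exactly the right ones. The observation that the one-dimensional $k$-marginal commutes with the operator $\phi \mapsto \phi_{-l}$ whenever $l \neq k$, applied to the compatibility identity $m_{\neq l} = \frac{1}{r}\sum_{j=1}^r (\pi_j m_{\neq j})_{-l}$ together with the recorded identity $(\pi_l m_{\neq l})_{-l} = m_{\neq l}$, settles every index $i \neq k$ and shows all these marginals equal the common value $c_k$. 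You are also right that $i = k$ is the one place where the claim is not literally immediate, since $m_{\neq k}$ carries no $k$-th coordinate to marginalize; recovering $(\pi_k m_{\neq k})^{(k)} = c_k$ from the fact that the average of the $r$ terms equals $c_k$ while the other $r-1$ terms have already been shown to equal $c_k$ is a clean way to close that gap, and the algebra is valid set by set because all quantities involved are finite measures evaluated at the same $S_k$.
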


Now let us consider the three-dimensional case ($r=3$). Because the aim of this section is merely to motivate the definitions of compromises and optimal compromises, there is no need for the discussion to be fully general. Let us therefore restrict the discussion to an important particular case. Assume that $\Omega_1$, $\Omega_2$ and $\Omega_3$ are finite sets and that $\mathcal{A}_1$, $\mathcal{A}_2$ and $\mathcal{A}_3$ are respectively the sets of all their subsets. This has an important consequence: any mapping from a subset of $\Omega$ to a subset of $\Omega$ is measurable. \medskip

Also assume that the Markov kernels $\pi_1$, $\pi_2$ and $\pi_3$ are positive mappings. For $\pi_1$, this means that for every $(\omega_1,\omega_2,\omega_3) \in \Omega_1 \times \Omega_2 \times \Omega_3$, $\pi_1(\{\omega_1\} | \omega_2,\omega_3) > 0$. \medskip

If we consider $\omega_3$ known, then the situation is reduced to the two-dimensional case. Because $\pi_1$ and $\pi_2$ are positive mappings and both $\Omega_1$ and $\Omega_2$ are finite sets, Markov chain theory ensures there exists a unique Gibbs compromise $P(\cdot|\omega_3)$. For every $(\omega_1,\omega_2) \in \Omega_1 \times \Omega_2$, 

\begin{equation} \label{Eq:compromis_ideal}
P(\{\omega_1\} \times \{\omega_2\} |\omega_3) = \frac{1}{2} \pi_1(\{\omega_1\}|\omega_2,\omega_3) P(\Omega_1 \times \{\omega_2\}|\omega_3) + \frac{1}{2} \pi_1(\omega_2|\omega_1,\omega_3) P(\{\omega_1\} \times \Omega_2|\omega_3).
\end{equation}

Thanks to Theorem \ref{Thm:Gibbs_compromis_optimal}, $P(\cdot|\omega_3)$ is the optimal compromise. Moreover, notice that
Equation \eqref{Eq:compromis_ideal} defines a Markov kernel on $\Omega_3 \times \left( \mathcal{A}_1 \otimes \mathcal{A}_2 \right)$. \medskip

We may similarly derive Markov kernels $Q: \Omega_1 \times \left( \mathcal{A}_2 \otimes \mathcal{A}_3 \right)$ and $R:  \Omega_2 \times \left( \mathcal{A}_1 \otimes \mathcal{A}_3 \right)$. \medskip

Once again, because $\pi_1$, $\pi_2$ and $\pi_3$ are positive mappings, it follows from Markov chain theory that $P$, $Q$ and $R$ are also positive mappings. We now show it using only elementary arguments, because these arguments will be useful again later. \medskip

Assume $P$ is not a positive mapping. Then there exists $(\omega_1^{(0)},\omega_2^{(0)},\omega_3^{(0)}) \in \Omega_1 \times \Omega_2 \times \Omega_2$ such that $P(\{\omega_1^{(0)}\} \times \{\omega_2^{(0)}\} | \omega_3^{(0)}) = 0$. Equation \eqref{Eq:compromis_ideal} then implies that $P(\Omega_1 \times \{\omega_2^{(0)}\}|\omega_3^{(0)})=0$. So for every $\omega_1 \in \Omega_1$, $P(\{\omega_1\} \times \{\omega_2^{(0)}\} | \omega_3^{(0)}) = 0$. But then Equation \eqref{Eq:compromis_ideal} implies that $P(\{\omega_1\} \times \Omega_2 | \omega_3^{(0)}) = 0$. Since this holds for every $\omega_1 \in \Omega_1$, $P(\Omega_1 \times \Omega_2 |\omega_3^{(0)}) = 0$, which is absurd since $P(\cdot | \omega_3^{(0)})$ is supposed to be a probability distribution. So $P$ is a positive mapping. \medskip

Ideally, we would wish to define the optimal compromise between $\pi_1$, $\pi_2$ and $\pi_3$ as the joint distribution $\phi$ such that for every $(\omega_1,\omega_2,\omega_3) \in \Omega_1 \times \Omega_2 \times \Omega_3$

\begin{align}
\phi(\{\omega_1\} \times \{\omega_2\} \times \{\omega_3\})
&= P(\{\omega_1\} \times \{\omega_2\} |\omega_3) \phi(\Omega_1 \times \Omega_2 \times \{\omega_3\}) \label{Eq:ideal_compromise_expression1}\\
&= Q(\{\omega_2\} \times \{\omega_3\} |\omega_1) \phi(\{\omega_1\} \times \Omega_2 \times \Omega_3) \\
&= R(\{\omega_1\} \times \{\omega_3\} |\omega_2) \phi(\Omega_2 \times \{\omega_2\} \times \Omega_3). \label{Eq:ideal_compromise_expression2}
\end{align}

Unfortunately, the existence of such an ``optimal compromise'' $\phi$ implies that $\pi_1$, $\pi_2$ and $\pi_3$ are compatible. First, one can show that for every $(\omega_1,\omega_2,\omega_3) \in \Omega_1 \times \Omega_2 \times \Omega_3$, $\phi(\{\omega_1\} \times \{\omega_2\} \times \{\omega_3\})>0$. The arguments are similar to those used above to show that $P$ is a positive mapping. Rewriting Equations \eqref{Eq:ideal_compromise_expression1} and \eqref{Eq:ideal_compromise_expression2} yields

\begin{align}
\frac{\phi(\{\omega_1\} \times \{\omega_2\} \times \{\omega_3\})}{\phi(\Omega_1 \times \{\omega_2\} \times \{\omega_3\})}
&= \frac{P(\{\omega_1\} \times \{\omega_2\} |\omega_3)}{P(\Omega_1 \times \{\omega_2\} |\omega_3)}
= \frac{1}{2} \pi_1(\{\omega_1\} | \omega_2,\omega_3) + \frac{1}{2} \pi_2(\{\omega_2\} | \omega_1,\omega_3) \frac{P(\{\omega_1\} \times \Omega_2 |\omega_3)}{P(\Omega_1 \times \{\omega_2\} |\omega_3)} \label{Eq:ideal_compromise_expression12}\\
&= \frac{R(\{\omega_1\} \times \{\omega_3\} |\omega_2)}{R(\Omega_1 \times \{\omega_2\} |\omega_3)}
= \frac{1}{2} \pi_1(\{\omega_1\} | \omega_2,\omega_3) + \frac{1}{2} \pi_3(\{\omega_3\} | \omega_1,\omega_2) \frac{R(\{\omega_1\} \times \Omega_3 |\omega_2)}{R(\Omega_1 \times \{\omega_3\} |\omega_2)}. \label{Eq:ideal_compromise_expression22}
\end{align}

Now combine Equations \eqref{Eq:ideal_compromise_expression12} and \eqref{Eq:ideal_compromise_expression22}:

\begin{equation}
\frac{1}{2} \pi_2(\{\omega_2\} | \omega_1,\omega_3) \frac{\phi(\{\omega_1\} \times \Omega_2 \times \{\omega_3\})}{\phi(\Omega_1 \times \{\omega_2\} \times \{\omega_3\})}
=
 \frac{1}{2} \pi_3(\{\omega_3\} | \omega_1,\omega_2) \frac{\phi(\{\omega_1\} \times \{\omega_2\} \times \Omega_3)}{\phi(\Omega_1 \times \{\omega_2\} \times \{\omega_3\})}.
\end{equation}

As this holds for every $(\omega_1,\omega_2,\omega_3) \in \Omega_1 \times \Omega_2 \times \Omega_3$, it implies that $\pi_2 \phi_{-2} = \pi_3 \phi_{-3}$. A similar proof then shows that $\pi_3 \phi_{-3} = \pi_1 \phi_{-1}$. This means that if an ``optimal compromise'' $\phi$ exists, then $\pi_1$, $\pi_2$ and $\pi_3$ are compatible and no compromise was needed. \medskip

Similarly to what was done in the two-dimensional case, we avoid this difficulty by weakening the compatibility requirements: we no longer require $P(\cdot|\omega_3)$ to be the optimal compromise between $\pi_1$ and $\pi_2$ \emph{for every} $\omega_3 \in \Omega_3$ (as expressed by  Equation \eqref{Eq:compromis_ideal}), but only 
\emph{on average} over $\omega_3 \in \Omega_3$. So a compromise $\phi$ should still verify Equation \eqref{Eq:ideal_compromise_expression1}, but it would only need to verify this weakened version of Equation \eqref{Eq:compromis_ideal} for every $(\omega_1,\omega_2) \in \Omega_1 \times \Omega_2$:

\begin{align}
&\sum_{\omega_3 \in \Omega_3} P(\{\omega_1\} \times \{\omega_2\} |\omega_3) \phi(\Omega_1 \times \Omega_2 \times \{\omega_3\}) \nonumber \\
=& \sum_{\omega_3 \in \Omega_3}
 \left[  \frac{1}{2} \pi_1(\{\omega_1\}|\omega_2,\omega_3) P(\Omega_1 \times \{\omega_2\}|\omega_3)+  \frac{1}{2} \pi_2(\{\omega_2\}|\omega_1,\omega_3) P(\{\omega_1\} \times \Omega_2|\omega_3) \right] \phi(\Omega_1 \times \Omega_2 \times \{\omega_3\}). \label{Eq:pragmatic_compromise}
\end{align}

Because Equation \eqref{Eq:ideal_compromise_expression1} is still expected to hold, Equation \eqref{Eq:pragmatic_compromise} is equivalent to

\begin{equation}
\phi_{-3}(\{\omega_1\} \times \{\omega_2\})
=
\frac{1}{2}  \sum_{\omega_3 \in \Omega_3} \pi_1(\{\omega_1\}|\omega_2,\omega_3) \phi_{-1} (\{\omega_2\} \times \{\omega_3\})
+
\frac{1}{2}  \sum_{\omega_3 \in \Omega_3} \pi_2(\{\omega_2\}|\omega_1,\omega_3) \phi_{-2} (\{\omega_1\} \times \{\omega_3\}).
\end{equation}

So the requirement boils down to $2 \phi_{-3} = (\pi_1 \phi_{-1})_{-3} + (\pi_2 \phi_{-2})_{-3}$. Of course, we symmetrically require $2 \phi_{-1} = (\pi_2 \phi_{-2})_{-1} + (\pi_3 \phi_{-3})_{-3}$ and $2 \phi_{-2} = (\pi_1 \phi_{-1})_{-2} + (\pi_3 \phi_{-3})_{-2}$ as well. \medskip

To sum this part of the discussion up, the necessity of weakening our ``ideal'' requirements for ``optimal compatibility'' made us downgrade from a requirement about a ``joint'' $3$-dimensional distribution $\phi$ to requirements about its $(3-1)$-marginal distributions $\phi_{-1}$, $\phi_{-2}$ and $\phi_{-3}$.  Definition \ref{Def:compatibilite} is just another formulation of these requirements, which are taken to define a compatible sequence of $(r-1)$-dimensional distributions. \medskip

Proposition \ref{Prop:hypermarginales_fixes} shows that in cases where there exists a unique Gibbs compromise, even with this weakened set of requirements for compatibility, there exists only one compatible sequence of $(r-1)$-dimensional distributions, so we may not strenghten it if there is to be any solution. Indeed, in cases where no Gibbs compromise exists, no compatible set of $(r-1)$-dimensional distributions 
exists either!

\subsection{Weaker definitions of compromises are inconvenient} \label{Sec:compromise_examples}

As was shown in the previous subsection, the requirements given by Definition \ref{Def:compatibilite} for the compatibility of a sequence of $(r-1)$-dimensional distributions with a given sequence of Markov kernels cannot be strengthened. The following shows they cannot be weakened either.

\subsubsection{Why we need some notion of compatibility: example in 2 dimensions.}

Why bother with the compatibility of $(r-1)$-dimensional distributions and not simply minimize the functional $E_\lambda$ of Definition \ref{Def:compromis_optimal} over all distributions absolutely continuous with respect to $\lambda$ ? The following 2-dimensional example shows that doing so yields unsatisfactory results. \medskip

Consider the following situation: $\Omega_1 = \Omega_2 = \{0,1\}$ and $\mathcal{A}_1=\mathcal{A}_2 = \{\emptyset,\{0\},\{1\},\{0,1\}\}$. Let $X_1$ (resp. $X_2$) be the identity function on $\Omega_1$ (resp. $\Omega_2$). Both $X_1$ and $X_2$ are measurable functions (and thus random variables when $\mathcal{A}_1 \otimes \mathcal{A}_2$ is endowed with a probability measure). Define the following Markov kernels:

\begin{align}
\pi_1(X_1 = 1|\omega_2) = \bs{1}_{\{\omega_2=0\}} + 1/2 \, \bs{1}_{\{\omega_2=1\}}; \\
\pi_2(X_2 = 1|\omega_1) = 1/2 \, \bs{1}_{\{\omega_1=0\}} + \bs{1}_{\{\omega_1=1\}}.
\end{align}

Let $\lambda$ be the counting measure.
Denote by $\pi_C$ the optimal compromise with respect to $\lambda$ (minimizing $E_\lambda$ over all compromises) and $\pi_E$ the distribution on $\mathcal{A}_1 \otimes \mathcal{A}_2$ that minimizes $E_\lambda$ over all distributions on $\mathcal{A}_1 \otimes \mathcal{A}_2$. We have $E_\lambda(\pi_C) = 2/25 > E_\lambda(\pi_E) = 1/15$ and 

\begin{align}
\pi_C(\{\omega_1\} \times \{\omega_2\}) &= 1/10 \left( \bs{1}_{\{(\omega_1,\omega_2)=(0,0)\}} + \bs{1}_{\{(\omega_1,\omega_2)=(1,0)\}} + 3 \, \bs{1}_{\{(\omega_1,\omega_2)=(0,1)\}} + 5 \, \bs{1}_{\{(\omega_1,\omega_2)=(1,1)\}} \right); \\
\pi_E(\{\omega_1\} \times \{\omega_2\}) &= 1/30 \left( 3 \, \bs{1}_{\{(\omega_1,\omega_2)=(0,0)\}} + \bs{1}_{\{(\omega_1,\omega_2)=(1,0)\}} + 11 \, \bs{1}_{\{(\omega_1,\omega_2)=(0,1)\}} + 15 \, \bs{1}_{\{(\omega_1,\omega_2)=(1,1)\}} \right).
\end{align}

As $\pi_C$ is the unique Gibbs compromise between $\pi_1$ and $\pi_2$, its marginals are the only compatible marginals: $(\pi_C)_{-1}(X_2=1) = 4/5$ and $(\pi_C)_{-2}(X_1=1) = 3/5$. The marginals of $\pi_E$ are noticeably different: $(\pi_E)_{-1}(X_2=1) = 13/15$ and $(\pi_E)_{-2}(X_1=1) = 8/15$. \medskip

Observe that according to $\pi_1$, $X_2=0$ implies $X_1=1$ but that according to $\pi_2$, $X_1=1$ implies that $X_2=1 \neq 0$. This discrepancy is a major source of incompatibility between the two Markov kernels. 
So, as $\pi_E$ makes both $X_1=1$ and $X_2=0$ less likely than $\pi_C$, it ``ignores the inconsistent parts''  of $\pi_1$ and $\pi_2$ to some extent. Therefore, if the marginals are not set in advance (say, by imposing compatibility with the conditionals in the sense of Definition \ref{Def:compatibilite}), one may ``cheat'' by having the marginals disadvantage inconvenient values for the parameters.

\subsubsection{Why the compatibility requirements can hardly be weakened: example in 3 dimensions.}

In the two-dimensional case, because of Proposition \ref{Prop:marginales_identiques}, Definitions \ref{Def:compatibilite_2d} and \ref{Def:compatibilite} give the same meaning to the concept of compatibility of marginals, so Definition \ref{Def:compatibilite} may be thought of as a generalization of Definition \ref{Def:compatibilite_2d} to cases with more than two dimensions. However, another generalization of the latter definition is possible. To avoid confusion, this other generalization will be called \emph{weak compatibility}. In the following, the sequence of Markov kernels $(\pi_i)_{i \in [\![1,r]\!]} $ is defined as in Section \ref{Sec:compromis_optimal_def}.

\begin{defn} \label{Def:compatibilite_faible}
A sequence of $(r-1)$-dimensional distributions $(m_{\neq i})_{i \in [\![1,r]\!]} $ is \emph{weakly compatible} with a sequence of Markov kernels $(\pi_i)_{i \in [\![1,r]\!]} $ if Equation \eqref{Eq:weak_compatibility} holds.
\end{defn}

Proposition \ref{Prop:marginales_identiques} means that compatibility in the sense of Definition \ref{Def:compatibilite} implies weak compatibility in the sense of Definition \ref{Def:compatibilite_faible}, hence its denomination as ``weak''. \medskip

Using the concept of weak compatibility of a sequence of $(r-1)$-marginal distributions, we define weak compromises and the optimal weak compromise as analogues to compromises and optimal compromises respectively.

\begin{defn}
A probability distribution $P$ on $\bigotimes_{i \in [\![1,r]\!]} \mathcal{A}_i$ is called a \emph{weak compromise} between the Markov kernels $(\pi_i)_{i \in [\![1,r]\!]} $ if these two conditions are verified:
\begin{enumerate}
\item for every $i \in [\![1,r]\!]$, $\pi_i P_{-i}$ is absolutely continuous with respect to $P$;
\item the sequence $(P_{-i})_{i \in [\![1,r]\!]}$ of $P$'s $(r-1)$-marginal distributions  
is weakly compatible with $(\pi_i)_{i \in [\![1,r]\!]} $.
\end{enumerate}
\end{defn}

\begin{defn}
Let $\lambda$ be a positive measure on $\mathcal{A}$. Let $P$ be a weak compromise between the sequence of Markov kernels $(\pi_i)_{i \in [\![1,r]\!]} $ that is absolutely continuous with respect to $\lambda$. $P$ is called an \emph{optimal weak compromise} with respect to $\lambda$ between the sequence of Markov kernels $(\pi_i)_{i \in [\![1,r]\!]} $ if it minimizes the functional $E_\lambda$ over all compromises between $(\pi_i)_{i \in [\![1,r]\!]} $ that are absolutely continuous with respect to $\lambda$. $E_\lambda$ is defined by Equation \eqref{Eq:energie_compromis}.
\end{defn}

Because, for any positive measure $\lambda$ on $\mathcal{A}$, the set of all weak compromises absolutely continuous with respect to $\lambda$ includes the set of all compromises absolutely continuous with respect to $\lambda$, an optimal weak compromise with respect to $\lambda$ makes the 
functional $E_\lambda$ 
no greater than an optimal compromise with respect to $\lambda$. 
However, as shown in the following example with $r=3$, optimal weak compromises may have undesirable behavior. \medskip

Assume $\Omega_1 = \Omega_2 = \Omega_3 = \{0,1\}$ and $\mathcal{A}_1 = \mathcal{A}_2 = \mathcal{A}_3 =  \{\emptyset,\{0\},\{1\},\{0,1\}\}$. Let $X_1$ (resp. $X_2$, $X_3$) be the identity function on $\Omega_1$ (resp. $\Omega_2$, $\Omega_3$).
 
Consider the following Markov kernels. For every $(\omega_1,\omega_2,\omega_3) \in \Omega_1 \times \Omega_2 \times \Omega_3$

\begin{align}
\pi_1(X_1 = 1| \omega_2, \omega_3) &= 1/2; \\
\pi_2(X_2 = 1| \omega_1, \omega_3) &= 1/2; \\
\pi_3(X_3 = 1| \omega_1, \omega_2) &=  \bs{1}_{\{\omega_1=\omega_2\}} + 1/2 \,\bs{1}_{\{\omega_1\neq \omega_2\}}.
\end{align}

Notice that, provided $\omega_3$ is known, $\pi_1$ and $\pi_2$ are compatible Markov kernels. The unique probability distribution on $\mathcal{A}_1 \otimes \mathcal{A}_2$ that fits both $\pi_1$ and $\pi_2$ (conditional to $\omega_3$) verifies for every $(\omega_1,\omega_2) \in \Omega_1 \times \Omega_2$ 

\begin{equation}
P(\{\omega_1\} \times \{\omega_2\} | \omega_3) = 1/4.
\end{equation}

Thus, any joint distribution fitting the Markov kernel $P$ would make $X_1$, $X_2$ and $X_3$ mutually independent. Unfortunately, no such joint distribution could fit $\pi_3$, but we may expect compromises between $\pi_1$, $\pi_2
$ and $\pi_3$ to retain the independence of $X_1$ and $X_2$. \medskip

Let $\lambda$ be the counting measure on $\mathcal{A}$. \medskip

Denote by $\pi_C$ the (unique) optimal compromise between $\pi_1$, $\pi_2$ and $\pi_3$ with respect to $\lambda$: we have $E_\lambda(\pi_C) = 1/48 \approx 0.021$. For every $(\omega_1,\omega_2,\omega_3) \in \Omega_1 \times \Omega_2 \times \Omega_3$

\begin{equation}
\pi_C(\{\omega_1\} \times \{\omega_2\} \times \{\omega_3\}) = \frac{1}{24} \bs{1}_{\{\omega_1=\omega_2,\omega_3=0\}} +
\frac{1}{12} \bs{1}_{\{\omega_1 \neq \omega_2,\omega_3=0\}} +
\frac{5}{24} \bs{1}_{\{\omega_1=\omega_2,\omega_3=1\}} +
\frac{1}{6} \bs{1}_{\{\omega_1 \neq \omega_2,\omega_3=1\}}.
\end{equation}

Notably, its third 2-marginal distribution $(\pi_C)_{-3}$ verifies for every $(\omega_1,\omega_2) \in \Omega_1 \times \Omega_2$

\begin{equation}
(\pi_C)_{-3}(\{\omega_1\} \times \{\omega_2\}) = 1/4.
\end{equation}

So, as expected, $\pi_C$ retains the independence of $X_1$ and $X_2$. Because $\pi_C$ is the unique Gibbs compromise between $\pi_1$, $\pi_2$ and $\pi_3$, Proposition \ref{Prop:hypermarginales_fixes} implies any other compromise between $\pi_1$, $\pi_2$ and $\pi_3$ also retains this property.

Let us now consider an optimal weak compromise $\pi_W$ between $\pi_1$, $\pi_2$ and $\pi_3$ with respect to $\lambda$. 
Numerical computation gives us the following approximation, with $E_\lambda(\pi_W) \approx 0.019$.
For every $(\omega_1,\omega_2,\omega_3) \in \Omega_1 \times \Omega_2 \times \Omega_3$,

\begin{equation}
\pi_W(\{\omega_1\} \times \{\omega_2\} \times \{\omega_3\}) \approx 0.04 \, \bs{1}_{\{\omega_1=\omega_2,\omega_3=0\}} +
0.10 \, \bs{1}_{\{\omega_1 \neq \omega_2,\omega_3=0\}} +
0.19 \, \bs{1}_{\{\omega_1=\omega_2,\omega_3=1\}} +
0.17 \, \bs{1}_{\{\omega_1 \neq \omega_2,\omega_3=1\}}.
\end{equation}

Its third 2-marginal distribution $(\pi_W)_{-3}$ is approximately

\begin{equation}
(\pi_W)_{-3}(\{\omega_1\} \times \{\omega_2\}) \approx 0.23 \, \bs{1}_{\{\omega_1 = \omega_2\}} + 0.27 \, \bs{1}_{\{\omega_1 \neq \omega_2\}}.
\end{equation}

Thus the independence of $X_1$ and $X_2$ is lost. Therefore, weak compatibility is no adequate notion of compatibility. As a matter of fact, although $(\pi_W)_{-3}$ and $(\pi_C)_{-3}$ share the same marginals, that is

\begin{align}
(\pi_W)_{-3}(\{\omega_1\} \times \Omega_2 ) = 1/2, \\
(\pi_W)_{-3}(\Omega_1 \times \{\omega_2\}) = 1/2,
\end{align}

$(\pi_W)_{-3}$ slightly disadvantages the event $X_1=X_2$, which is where the incompatibility between $\pi_3$ and the pair $(\pi_1,\pi_2)$ is most obvious: according to $\pi_3$, $X_1=X_2$ implies $X_3=1$, so conversely, $X_3=0$ should imply $X_1 \neq X_2$, when in fact $\pi_1$ and $\pi_2$ state that even given $\omega_3=0$, $\{X_1 \neq X_2 \}$ only happens with probability 1/2. On the other hand, according to $\pi_3$, if $X_1 \neq X_2$, then $X_3$ can with equal probability be 0 or 1, which matches $\pi_1$ and $\pi_2$ better.

\section{Optimal compromise between Objective Posterior conditional distributions in Gaussian Process regression} \label{Sec:convergence_Gibbs_algorithm}

Kriging is a surrogate model used to emulate a real-valued function on a spatial domain $\mathcal{D}$ when said function can only be evaluated on a finite subset of $\mathcal{D}$ called ``design set''. 
The ``Kriging prediction'' is the mean function of the process taken conditionally to all known values of the emulated function, i.e. the values at the points in the design set. The main advantage of the framework is its natural way of 
representing uncertainty about the value of the function at unobserved points \cite{SWN}. Prediction does not consist of a single value but of a complete Normal distribution. ``Kriging'' is the name given to the framework in the geostatistical literature \cite{JH78}, but is also frequently used in the context of computer experiments and machine learning under the label ``Gaussian Process regression'' \cite{Ras06}.  In this work, we focus on Simple Kriging, where the  Gaussian Process is assumed to be stationary with known mean, as opposed to Universal Kriging, which incorporates an unknown mean function. \medskip

The probability distribution of a stationary Gaussian Process 
is characterized by a variance parameter and a correlation function (also known as ``correlation kernel'') which itself depends on parameters. So 
one should deal with uncertainty about model parameters.

The problem is ``notoriously difficult'', as highlighted by \citet{KOH01}, because the likelihood function may often be quite flat \cite{LS05}. In a Bayesian framework, this uncertainty is represented by a prior distribution on the parameters. \medskip

\subsection{Issues raised by objective prior elicitation for Gaussian processes}

Let $Y(\bs{x})$, $\bs{x} \in \mathcal{D} $ be a real-valued random field on a bounded subset $\mathcal{D}$ of $\R^r$. 
We assume $Y$ is Gaussian with zero mean (or known mean) and with covariance
of the form ${\rm Cov}( Y(\bs{x}),Y(\bs{x}')) = \sigma^2 K_{\bs{\theta}}(\bs{x}-\bs{x}')$.
$\sigma^2$ thus denotes the variance of the Gaussian Process
 and $\bs{\theta} \in (0,+\infty)^r$, hereafter named the ``vector of correlation lengths'', is the vector of scaling parameters used by the chosen class of correlation kernels $K_{\bs{\theta}}$. \medskip

Consider a set of $n \in \N$ points $(\bs{x}^{(i)})_{i \in [\![1,n]\!]}$ belonging to the domain $\mathcal{D}$. This set is called the design set and $Y$ is observed at all points of this set. Let $\bs{Y}$ be the Gaussian vector $(Y(\bs{x}^{(i)}))_{i \in [\![1,n]\!]}$ and let $\corr$ be its correlation matrix: the distribution of $\bs{Y}$ is therefore $\mathcal{N}(\bs{0}_n, \sigma^2 \corr)$.\medskip

Let $\bs{y}$ be the vector of observations. When applied to a matrix, $ | \cdot | $ refers to its determinant. \medskip

With these notations, 
the likelihood of the parameters $\sigma^2$ and  $\bs{\theta}$ is

\begin{equation} \label{Eq:vraisemblance}
L( \bs{y} \; | \;  \sigma^2 , \bs{ \theta } ) = 
\left( \frac{ 1 }{ 2 \pi \sigma^2 } \right) ^ { \frac{n}{2} } | \bs{ \Sigma }_{ \bs{ \theta } } | ^ {- \frac{1}{2} } \exp \left\{ - \frac{ 1 }{ 2 \sigma^2 } \bs{y} \trans \bs{ \Sigma }_{ \bs{ \theta } }^{-1} \bs{y} \right\} \; .
\end{equation}

The reference prior with parameter ordering $\sigma^2 \prec \bs{\theta}$ is given by \citet{BDOS01}:

\begin{align}
\pi(\sigma^2,\bs{\theta}) =  \pi(\sigma^2 | \bs{\theta}) \pi(\bs{\theta}) \quad \mathrm{with} \quad
\pi(\sigma^2 | \bs{\theta}) \propto 1 / \sigma^2. 
\end{align}

The distribution $\pi(\sigma^2 | \bs{\theta})$ has infinite mass: it is an improper prior. \medskip

Let us integrate $\sigma^2$ out of the likelihood (\ref{Eq:vraisemblance}):

\begin{equation} \label{Eq:vraisemblance_integree}
L^{1} ( \bs{y} \; | \; \bs{ \theta } ) \propto
\int_0^{ \infty } L( \bs{y} \; | \; \sigma^2 , \bs{ \theta } ) \pi ( \sigma^2 | \; \bs{ \theta } ) \; d( \sigma^2 ) =
\left( \frac{ 2 \pi^{\frac{n}{2}} }{ \Gamma \left(\frac{n}{2}\right) } \right)^{-1}   | \bs{ \Sigma }_{ \bs{ \theta } } | ^ {- \frac{1}{2} } 
 \left( \bs{y} \trans \corr^{-1} \bs{y}  \right)^{ - \frac{ n }{2}  }.  
\end{equation}

\citet{RSH12} provide the reference prior $\pi(\bs{\theta})$, where $\bs{\theta}$ is regarded as a single multidimensional parameter. It is proportional to the square root of the determinant of the $r \times r$ matrix with $(i,j)$-th entry

\begin{equation} \label{Eq:Fisher_info}
\Tr \left[ \left( \frac{ \partial }{\partial \theta_i } \left( \corr \right) \corr^{-1} \right) 
\left( \frac{ \partial }{\partial \theta_j } \left( \corr \right) \corr^{-1} \right)  \right] 
- \frac{1}{ n } \Tr \left[ \frac{ \partial }{\partial \theta_i } \left( \corr \right) \corr^{-1} \right] 
\Tr \left[ \frac{ \partial }{\partial \theta_j } \left( \corr \right) \corr^{-1} \right].
\end{equation}

However, this method has the disadvantage of requiring the use of a multidimensional Jeffreys-rule prior distribution, which may show the sort of undesirable behavior mentioned in the introduction.

Alternatively, we could draw inspiration from the one-dimensional case in the following way. Suppose that we know every entry of $ \bs{ \theta } $ except one, $ \theta_i $. Then, according to Equation \eqref{Eq:Fisher_info}, the prior density on $ \theta_i $ knowing all entries $ \theta_j $ ($ j \neq i $) would be

\begin{equation} \label{PriorCond}
\pi_i( \theta_i \; | \; \theta_j \; \forall j \neq i ) \propto \sqrt{ \Tr \left[ \left( \frac{ \partial }{\partial \theta_i } (\corr) \corr^{-1} \right)^2 \right] - \frac{1}{ n } \Tr \left[ \frac{ \partial }{\partial \theta_i } (\corr) \corr^{-1} \right]^2 }.
\end{equation}

The density functions $\pi_i( \theta_i \; | \; \theta_j \; \forall j \neq i )$ ($i \in [\![1,r]\!]$) define Markov kernels. Indeed, they are continuous with respect to the $\theta_j$ ($j \neq i$) and are probability densities with respect to the Lebesgue measure. They are unfortunately likely to violate the necessary condition for compatibility given by Equation \eqref{Eq:compatible_conditionals}. \medskip

Let us now consider the corresponding posterior conditional densities $\pi_i( \theta_i \; |  \; \bs{y}, \; \theta_j \; \forall j \neq i )$ ($i \in [\![1,r]\!]$). Just like their prior counterparts, they are likely to violate the necessary condition for compatibility. However, each of them represents our opinion about one parameter if all others were known. This is a setting where the results of Section \ref{Sec:compromis_optimal} can be applied in order to find the optimal compromise between the Markov kernels $\R^{r-1} \times \mathcal{B}(\R)$ they define. This optimal compromise will then be taken as posterior probability of the vector $\bs{\theta}$. In the following, we describe settings in which there exists a single Gibbs compromise between these Markov kernels. Theorem \ref{Thm:Gibbs_compromis_optimal} then asserts it is the optimal compromise. We call this compromise the Gibbs reference posterior distribution because of its link to the reference posterior distribution in settings with a one-dimensional parameter $\bs{\theta}$. \medskip

However, even though we call it a ``posterior'' distribution, it is unclear whether there exists a prior distribution from which it could be derived using Bayes' rule. Denote by $\pi_G(\bs{\theta} | \bs{y})$ the probability density with respect to the Lebesgue measure of the Gibbs reference posterior distribution. Bayes' rule requires that in case a (proper or improper) prior density $\pi_G(\bs{\theta})$ exists, there should also exist a function $\tilde{L}(\bs{y})$ such that, for almost every $\bs{\theta} \in \R^r$ in the sense of the Lebesgue measure,

\begin{equation} \label{Eq:condition_exists_Gibbs_prior}
\frac{\pi_G(\bs{\theta} | \bs{y})}{L^1(\bs{y} | \bs{\theta})} = \frac{\pi_G(\bs{\theta})}{\tilde{L}(\bs{y})}.
\end{equation}

As we have no explicit expression of $\pi_G(\bs{\theta} | \bs{y})$, we have no way to check whether Equation \eqref{Eq:condition_exists_Gibbs_prior} holds or not. \medskip

In this section, we establish that, whenever Matérn anisotropic geometric or tensorized kernels with known smoothness parameter $\nu$ are used, under certain conditions to be detailed later, there exists a unique Gibbs compromise between the reference posterior conditionals, which thanks to Theorem \ref{Thm:Gibbs_compromis_optimal} is the optimal compromise. Henceforth, it will be called ``Gibbs reference posterior distribution'', even though this ``posterior'' has not been derived from a prior distribution using the Bayes rule. \medskip

All proofs for this section can be found in Appendix \ref{App:noyaux_Matérn}.

\subsection{Definitions}

In this work, we 
use the following convention for the Fourier transform: the Fourier transform $\widehat{g}$ of a smooth function $g : \R^r \rightarrow \R$ verifies $ g(\bs{x}) = \int_{\R^r} \widehat{g} (\bs{\omega}) e^{i \langle \bs{\omega} | \bs{x} \rangle} d \bs{\omega} $ and $ \widehat{g} (\bs{\omega}) = (2 \pi)^{-r} \int_{\R^r} g(\bs{x}) e^{-i \langle \bs{\omega} | \bs{x} \rangle} d \bs{x} $. \medskip

Let us set up a few notations.

\begin{enumerate}[(a)]
\item $\mathcal{K}_\nu$ is the modified Bessel function of second kind with parameter $\nu$ ;
\item $K_{r,\nu}$ is the $r$-dimensional Matérn isotropic covariance kernel with variance 1, correlation length 1 and smoothness $\nu \in (0,+\infty)$ and $\widehat{K}_{r,\nu}$ is its Fourier transform:

\begin{enumerate}[(i)]
\item $\forall \bs{x} \in \R^r$,
\begin{equation} \label{Eq:Matern_anis_geom}
K_{r,\nu}(\bs{x}) = \frac{1}{\Gamma(\nu) 2^{\nu - 1} } \left( 2 \sqrt{\nu} \|\bs{x}\| \right) ^{\nu} \mathcal{K}_\nu \left( 2 \sqrt{\nu} \|\bs{x}\| \right) \; ;
\end{equation}
\item 
$
\forall \bs{\omega} \in \R^r,
$
\begin{equation}
\widehat{K}_{r,\nu} (\bs{\omega}) 
= \frac{M_r(\nu)}{( \|\bs{\omega}\|^2 + 4 \nu  )^{  \nu + \frac{r}{2}  }}
\text{ with }
M_r(\nu) = \frac{\Gamma ( \nu + \frac{r}{2} ) (2 \sqrt{\nu} )^{2 \nu } }{ \pi^{\frac{r}{2} } \Gamma(\nu) }.
\end{equation}
\end{enumerate}

\item $K_{r,\nu}^{tens}$ is the $r$-dimensional Matérn tensorized covariance kernel with variance 1, correlation length 1 and smoothness $\nu \in \R_+$ and $\widehat{K}_{r,\nu}^{tens}$ is its Fourier transform:

\begin{enumerate}[(i)]
\item $\forall \bs{x} \in \R^r$,
\begin{equation} \label{Eq:Matern_tens}
K_{r,\nu}^{tens}(\bs{x}) = \prod_{j=1}^r K_{1,\nu}(\bs{x}_j) \; ;
\end{equation}
\item $\forall \bs{\omega} \in \R^r$,
\begin{equation}
\widehat{K}_{r,\nu}^{tens} (\bs{\omega}) =  \prod_{j=1}^r \widehat{K_{1,\nu}}(\bs{\omega}_j).
\end{equation}
\end{enumerate}
\item if $\bs{t} \in \R^r$, 
  $ \frac{\bs{t}}{\bs{\theta}} = \left( \frac{t_1}{\theta_1},...,\frac{t_r}{\theta_r} \right)$ and 
  $ \bs{t} \, \bs{\mu} = \left( t_1 \mu_1,..., t_r \mu_r \right)$.
\end{enumerate}

We define the Matérn geometric anisotropic covariance kernel with variance parameter $\sigma^2$, correlation lengths $\bs{\theta}$ (resp. inverse correlation lengths $\bs{\mu}$) and smoothness $\nu$ as the function $\bs{x} \mapsto \sigma^2 K_{r,\nu}\left(\frac{\bs{x}}{\bs{\theta}}\right)$ (resp. $\bs{x} \mapsto \sigma^2 K_{r,\nu}\left(\bs{x}\bs{\mu}\right)$).

Similarly, we define the Matérn tensorized covariance kernel with variance parameter $\sigma^2$, correlation lengths $\bs{\theta}$ (resp. inverse correlation lengths $\bs{\mu}$) and smoothness $\nu$ as the function $\bs{x} \mapsto \sigma^2 K_{r,\nu}^{tens}\left(\frac{\bs{x}}{\bs{\theta}}\right)$ (resp. $\bs{x} \mapsto \sigma^2 K_{r,\nu}^{tens} \left(\bs{x}\bs{\mu}\right)$). \medskip

Thanks to Proposition \ref{Prop:invariant_reparametrisation}, we may choose any parametrization we wish for the Matérn correlation kernels. We have found that the parametrization involving inverse correlation lengths makes proofs easier. \medskip

Several key passages in the proofs (to be found in Appendix \ref{App:noyaux_Matérn}) involve a technical assumption on the design set: 
\begin{defn}
A design set is said to have coordinate-distinct points, or simply to be coordinate-distinct, if 
for any distinct points in the set $\bs{x}$ and $\bs{x'}$, every component of the vector $\bs{x} - \bs{x'}$ differs from 0.
\end{defn}

Most randomly sampled design sets almost surely have coordinate-distinct points -- for instance Latin Hypercube Sampling. Cartesian product design sets, however, do not. 

\subsection{Main result}

The result is valid for Simple Kriging models with the following characteristics:
\begin{enumerate}[(a)]
\item the design set contains $n$ coordinate-distinct points in $\R^r$ ($n$ and $r$ are positive integers);
\item the covariance function is Matérn anisostropic geometric or tensorized with variance parameter $\sigma^2>0$, smoothness parameter $\nu$ and vector of correlation lengths (resp. inverse correlation lengths) $\bs{\theta}\in (0,+\infty)^r$ (resp. $\bs{\mu} \in (0,+\infty)^r$) ;
\item one of the following conditions is verified:
\begin{enumerate}[(i)]
\item $\nu \in (0,1)$ and $n>1$ and the Matérn kernel is tensorized ;
\item $\nu \in (1,2)$ and $n>r+2$ ;
\item $\nu \in (2,3)$ and $n>r(r+1)/2 + 2r + 3$.
\end{enumerate}
\end{enumerate}

\begin{thm} \label{Thm:existence_posterior_gibbs}

In a Simple Kriging model with the characteristics described above, there exists a hyperplane $\mathcal{H}$ of $\R^n$ such that, for any $\bs{y} \in \R^n \setminus \mathcal{H}$,
there exists a unique Gibbs compromise $\pi_G(\bs{\theta} | \bs{y})$ (resp. $\pi_G(\bs{\mu} | \bs{y})$) between the reference posterior conditionals $\pi_i(\theta_i | \bs{y}, \bs{\theta}_{-i})$ (resp. $\pi_i(\mu_i | \bs{y}, \manqueI)$). It is the unique stationary distribution of the Markov kernel $ P_{\bs{y}} : (0,+\infty)^r \times \mathcal{B}\left((0,+\infty)^r\right) \rightarrow [0,1]$ defined by
\begin{align}
P_{\bs{y}}(\bs{\theta}^{(0)}, d \bs{\theta}) &= \frac{1}{r} \sum_{i=1}^r \pi_i(\theta_i | \bs{y}, \bs{\theta}_{-i}^{(0)}) d \theta_i \;  \delta_{\bs{\theta}_{-i}^{(0)}} (d \bs{\theta}_{-i}) \nonumber \\
\text{(resp.} \; P_{\bs{y}}(\bs{\mu}^{(0)}, d \bs{\mu}) &= \frac{1}{r} \sum_{i=1}^r \pi_i(\mu_i | \bs{y}, \manqueI^{(0)}) d \mu_i \;  \delta_{\manqueI^{(0)}} (d \manqueI) \text{).} \nonumber
\end{align}
The Markov kernel $ P_{\bs{y}} $ is uniformly ergodic: 
$ \lim_{n \to \infty} \sup_{\bs{\theta}^{(0)} \in (0,+\infty)^r } \| P_{\bs{y}}^n (\bs{\theta}^{(0)},\cdot) - \pi_G(\cdot | \bs{y}) \|_{TV} = 0$
(resp. $ \lim_{n \to \infty} \sup_{\bs{\mu}^{(0)} \in (0,+\infty)^r } \| P_{\bs{y}}^n (\bs{\mu}^{(0)},\cdot) - \pi_G(\cdot | \bs{y}) \|_{TV} = 0$), where $\| \cdot \|_{TV}$ is the total variation norm.
\end{thm}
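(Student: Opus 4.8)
The plan is to prove the three assertions simultaneously by showing that the kernel $P_{\bs{y}}$ is uniformly ergodic, since a Doeblin (uniform ergodicity) argument automatically delivers a unique stationary distribution together with the claimed total-variation convergence. I would work throughout in the inverse-correlation-length parametrization $\bs{\mu}$, which by Proposition \ref{Prop:invariant_reparametrisation} entails no loss of generality. The link to Section \ref{Sec:compromis_optimal} is the observation that a probability distribution $\pi_G$ is stationary for $P_{\bs{y}}$, i.e. $\int P_{\bs{y}}(\bs{\mu}^{(0)},\cdot)\,\pi_G(d\bs{\mu}^{(0)}) = \pi_G$, if and only if $\pi_G = \frac{1}{r}\sum_{i=1}^r \pi_i (\pi_G)_{-i}$: the Dirac factor $\delta_{\manqueI^{(0)}}$ in $P_{\bs{y}}$ collapses the conditioning block onto the $(r-1)$-marginal $(\pi_G)_{-i}$, which is exactly the Gibbs-compromise fixed-point equation. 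Hence the unique stationary distribution is the unique Gibbs compromise, and Theorem \ref{Thm:Gibbs_compromis_optimal} then certifies it as the optimal compromise.

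\textbf{Boundary analysis (the crux).} I would first reparametrize each $\mu_i$ by a bijection onto a bounded interval (for instance via $\argtanh$ of a rescaled variable), so that the state space becomes the open box $(0,1)^r$ with compact closure $[0,1]^r$, the boundary faces corresponding to correlation lengths collapsing to $0$ or diverging to $+\infty$. The heart of the proof is to show that each conditional posterior density $\pi_i(\mu_i \mid \bs{y}, \manqueI)$ --- the product of the integrated likelihood $L^1$ of \eqref{Eq:vraisemblance_integree} with the conditional reference prior \eqref{PriorCond} --- is a \emph{proper} density that, once transported to the bounded coordinates, extends to a jointly continuous, strictly positive function on all of $[0,1]^r$. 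Propriety is precisely what forces the $(\nu,n)$ thresholds in hypothesis (c): I would analyze the two competing factors as a single coordinate approaches a boundary face, using the Fourier representations $\widehat{K}_{r,\nu}$ and $\widehat{K}_{r,\nu}^{tens}$ to extract the leading asymptotics of $\coIrr$, of $|\coIrr|$, of the quadratic form $\bs{y}\trans \coIrr^{-1}\bs{y}$, and of the Fisher-information integrand $\Tr[\,\cdot\,]^2$ appearing in \eqref{PriorCond}. Coordinate-distinctness of the design set is what keeps these limits non-degenerate. The exceptional hyperplane $\mathcal{H}$ emerges as the locus of $\bs{y}$ on which the limiting quadratic form degenerates as all correlations tend to $1$ (the $\bs{\mu}\to 0$ face, where $\coIrr \to \bs{1}_n \bs{1}_n\trans$); I expect it to be $\mathcal{H} = \{\bs{y}\in\R^n : \bs{1}_n\trans \bs{y} = 0\}$. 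Off $\mathcal{H}$, every factor stays bounded and bounded away from $0$, and the integrability exponents produced by the dimension counts make each conditional integrable at both ends.

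\textbf{From the boundary estimates to uniform ergodicity.} Granting this continuity-and-positivity statement, I would obtain uniform ergodicity by a Doeblin minorization. Consider the $r$ updates performed in the fixed order $1,2,\dots,r$, an event of probability $(1/r)^r$ under the random scan; conditionally on it the $r$-step transition has, with respect to Lebesgue measure, the density $\prod_{i=1}^r \pi_i\!\big(\mu_i \mid \bs{y},\, \mu_1,\dots,\mu_{i-1},\, \mu_{i+1}^{(0)},\dots,\mu_r^{(0)}\big)$. Since each factor is, after reparametrization, continuous and strictly positive on the compact box in all its arguments, the infimum over the conditioning variables yields for each $i$ a continuous strictly positive lower bound $g_i(\mu_i)$ independent of the starting point $\bs{\mu}^{(0)}$. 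Therefore $P_{\bs{y}}^r(\bs{\mu}^{(0)},\cdot) \geq (1/r)^r \big(\prod_i g_i\big)\,d\bs{\mu} =: \varepsilon\,\phi(\cdot)$ uniformly in $\bs{\mu}^{(0)}$, with $\varepsilon>0$ and $\phi$ a probability measure charging only the interior. This Doeblin condition gives uniform ergodicity and a unique stationary distribution; the fixed-point equation $\pi_G = \frac{1}{r}\sum_i \pi_i(\pi_G)_{-i}$, each summand being absolutely continuous in coordinate $i$, forces $\pi_G$ to give zero mass to every boundary face, so it descends to a genuine distribution on $(0,+\infty)^r$. Combined with the stationarity-versus-Gibbs-compromise equivalence above, this yields uniqueness of the Gibbs compromise and the stated total-variation convergence in iteration count. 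The main obstacle is unmistakably the boundary analysis of the second paragraph: controlling the interplay of $|\coIrr|$, $\bs{y}\trans\coIrr^{-1}\bs{y}$ and the Fisher-information term as correlation lengths degenerate is delicate, and it is exactly there that the Matérn Fourier asymptotics, coordinate-distinctness, and the counts $n>r+2$ and $n>r(r+1)/2+2r+3$ are consumed.
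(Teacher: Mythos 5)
Your overall architecture coincides with the paper's: reduce everything to a Doeblin minorization of $P_{\bs{y}}$ by establishing, for each $i$, a lower bound on $\pi_i(\mu_i \mid \bs{y}, \manqueI)$ that is positive and depends only on $\mu_i$, uniformly over the conditioning block; the paper's Lemma \ref{Lem:implique_theoreme} is exactly your $g_i$, and the final step $P_{\bs{y}}^n \geqslant r^{-r}\prod_i m_{i,\bs{y}}(\mu_i)\,d\mu_i$ is your fixed-scan-order event. Two points in your boundary analysis, however, do not survive contact with the details. First, your identification of the exceptional hyperplane is backwards. The degeneracy at the $\bs{\mu}\to 0$ face is controlled by Proposition \ref{Prop:norm_corr_bounded_by_ySigmay}: one needs $\bs{y}\trans\coIrr^{-1}\bs{y}$ to blow up at rate $\norme^{-2\nu}$ so that the integrated likelihood decays fast enough to tame the prior, and this happens precisely when $\bs{y}$ has a nonzero component in $k_\nu$, the orthogonal complement of $\mathrm{span}(\bs{1},\bs{X}_i,\dots)$. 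The excluded set is therefore $k_\nu^{\perp}=\mathrm{span}(\bs{1},\bs{X}_i,\dots)$ (for $\nu\in(0,1)$, the $\bs{y}$ \emph{collinear} to $\bs{1}$), a low-dimensional subspace that one then encloses in a hyperplane $\mathcal{H}$ — not the set $\{\bs{y}:\bs{1}\trans\bs{y}=0\}$ of vectors \emph{orthogonal} to $\bs{1}$, which is essentially the complement of the bad set. With your $\mathcal{H}$ the theorem would be false (take $\bs{y}=\bs{1}$).

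Second, your plan to compactify to $[0,1]^r$ and extend each conditional density to a \emph{jointly continuous, strictly positive} function on the closed box claims more than is true (or at least more than is needed or proved). At the face where $\normeI\to\infty$, both the unnormalized density $L(\bs{y}\mid\bs{\mu})f_i(\mu_i\mid\manqueI)$ and its normalizing integral tend to $0$ (since $\coIrr\to\bs{I}_n$ and $\|\deIrivee\|\to 0$), so the conditional density is a $0/0$ ratio whose limit is direction-dependent; the paper does not establish continuity there but only a lower bound, via the ratio estimate of Lemma \ref{Lem:densite_minoree} and the kernel properties (P1)--(P2) of Proposition \ref{Prop:grandmu_anis_geom} (which, incidentally, is where the restriction to tensorized kernels for $\nu\in(0,1)$ enters — a case distinction your proposal does not account for). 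Your Doeblin step only consumes the lower bound, so the argument can be repaired by weakening the continuity claim to exactly that; but as stated, the continuous-extension step is the one that would fail, and it is where the real work of Propositions \ref{Prop:grandmu_tens}, \ref{Prop:grandmu_anis_geom}, \ref{Prop:minoration_proba} and \ref{Prop:densite_continue_0} is hidden.
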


\begin{rmq}
The reference posterior conditionals are invariant by reparametrization, so the Markov kernel $ P_{\bs{y}} $ does not depend on whether the chosen parametrization uses correlation lengths $\bs{\theta}$ or inverse correlation lengths $\bs{\mu}$. Due to Proposition \ref{Prop:invariant_reparametrisation}, the Gibbs compromise does not either. The parametrization using inverse correlation lenghts $\bs{\mu}$ is more convenient for proving this theorem, however.
\end{rmq}

Notice that in such a Kriging model, the vector of observations $\bs{y}$ almost surely belongs to $\R^n \setminus \mathcal{H}$, so this assumption is of no practical consequence. Theorem \ref{Thm:existence_posterior_gibbs} therefore asserts that the Gibbs compromise between the incompatible conditionals $\pi_i(\mu_i | \bs{y}, \manqueI)$ exists, is unique, and can be sampled from using Potentially Incompatible Gibbs Sampling (PIGS). In the following, it is called ``Gibbs reference posterior distribution''. 

\subsection{Using the Gibbs reference posterior distribution} \label{Sec:usage_posterior}

Let $\bs{x}_0$ be a point in the domain $\mathcal{D}$ that does not belong to the design set. Denote by $\corrNouveauAncien$ the correlation matrix between $Y(\bs{x}_0)$ and $\bs{Y}$, and by $\corrAncienNouveau$ its transpose the correlation matrix between $\bs{Y}$ and $Y(\bs{x}_0)$. \medskip

Theorem 4.1.2. (case 4) of \citet{SWN} provides this useful result for prediction:

\begin{prop}
Conditionally to $\bs{Y} = \bs{y}$ and assuming $\bs{\theta}$ is known, 
the random variable $Z_0$ defined below follows the Student t-distribution with $n$ degrees of freedom.
$$ Z_0 := \sqrt{ \frac{n}{ \bs{y} \trans \corr^{-1} \bs{y} } } \frac{ Y(\bs{x}_0) - \corrNouveauAncien \corr^{-1} \bs{y} } { \sqrt{1 - \corrNouveauAncien \corr^{-1} \corrAncienNouveau } }. $$
\end{prop}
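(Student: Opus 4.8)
The plan is to exploit the normal--inverse-gamma structure of the Simple Kriging model: condition first on $\sigma^2$ to obtain a Gaussian predictive law for $Y(\bs{x}_0)$, then integrate $\sigma^2$ out against its reference posterior and recognize the resulting scale mixture of Gaussians as a Student t-distribution. The cited result of \citet{SWN} can thus be re-derived from the ingredients already present in this section.

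First I would record the joint law of the augmented vector $(Y(\bs{x}_0),\bs{Y})$. Since $Y$ is a centered Gaussian process with covariance $\sigma^2 K_{\bs{\theta}}$ and $K_{\bs{\theta}}(\bs{0})=1$, conditionally on $\sigma^2$ and $\bs{\theta}$ this vector is Gaussian with mean $\bs{0}$ and covariance $\sigma^2$ times the $2\times 2$ block matrix with diagonal blocks $1$ and $\corr$ and off-diagonal blocks $\corrNouveauAncien$ and $\corrAncienNouveau$. Applying the standard Gaussian conditioning (Schur complement) formula gives
\begin{equation}
Y(\bs{x}_0) \mid \bs{Y}=\bs{y}, \sigma^2, \bs{\theta} \sim \mathcal{N}\!\left( \corrNouveauAncien \corr^{-1} \bs{y}, \; \sigma^2 \left(1 - \corrNouveauAncien \corr^{-1} \corrAncienNouveau\right) \right),
\end{equation}
which identifies the conditional mean and the conditional variance $\sigma^2 \tau^2$, with $\tau^2 := 1 - \corrNouveauAncien \corr^{-1} \corrAncienNouveau$.

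Next I would compute the posterior of $\sigma^2$. Combining the likelihood \eqref{Eq:vraisemblance} with the reference prior $\pi(\sigma^2\mid\bs{\theta}) \propto 1/\sigma^2$ yields a density proportional to $(\sigma^2)^{-n/2-1}\exp\{-\bs{y}\trans\corr^{-1}\bs{y}/(2\sigma^2)\}$, i.e.
\begin{equation}
\sigma^2 \mid \bs{y}, \bs{\theta} \sim \mathcal{IG}\!\left( \tfrac{n}{2}, \; \tfrac{1}{2}\,\bs{y}\trans\corr^{-1}\bs{y} \right), \qquad \text{equivalently}\qquad W := \frac{\bs{y}\trans\corr^{-1}\bs{y}}{\sigma^2} \;\Big|\; \bs{y},\bs{\theta} \;\sim\; \chi^2_n .
\end{equation}
The final step assembles the two ingredients. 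Writing $T := (Y(\bs{x}_0) - \corrNouveauAncien\corr^{-1}\bs{y})/\sqrt{\sigma^2 \tau^2}$, the first display shows $T\mid\sigma^2 \sim \mathcal{N}(0,1)$ whatever the value of $\sigma^2$; hence $T$ is independent of $\sigma^2$, and therefore of $W$. A direct cancellation of the $\sigma$ factors then gives $T/\sqrt{W/n} = Z_0$, so $Z_0$ is the ratio of a standard normal variable to the square root of an independent $\chi^2_n$ variable divided by its degrees of freedom, which is precisely the Student t-distribution with $n$ degrees of freedom.

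The computations are all routine; the only point requiring genuine care is the independence argument. One must check that $T$ and $W$ are independent under the joint posterior predictive law of $(Y(\bs{x}_0),\sigma^2)$ given $\bs{y},\bs{\theta}$, and not merely conditionally independent given $\sigma^2$. This follows because the conditional law of $T$ given $\sigma^2$ does not depend on $\sigma^2$, but it is worth stating explicitly, since it is exactly what licenses recognizing the scale mixture as an \emph{exact} $t$-distribution. I would also display the algebraic cancellation in $T/\sqrt{W/n}$ to confirm the stated normalizing constant $\sqrt{n/(\bs{y}\trans\corr^{-1}\bs{y})}$.
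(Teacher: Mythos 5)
Your proposal is correct. Note, however, that the paper does not actually prove this Proposition: it simply invokes Theorem 4.1.2 (case 4) of \citet{SWN}. What you have written is a self-contained derivation of that cited result, and it is the standard one: the Gaussian conditioning step, the identification of the posterior $\sigma^2 \mid \bs{y}, \bs{\theta} \sim \mathcal{IG}\left(\tfrac{n}{2}, \tfrac{1}{2}\bs{y}\trans\corr^{-1}\bs{y}\right)$ (which is exactly the computation behind the integrated likelihood \eqref{Eq:vraisemblance_integree} already displayed in the paper), and the scale-mixture representation of the Student t-distribution. Your emphasis on the independence of $T$ and $W$ is well placed; that $T \mid \sigma^2, \bs{y}, \bs{\theta}$ is standard normal \emph{for every} value of $\sigma^2$ is precisely what upgrades conditional normality to genuine independence from $W = \bs{y}\trans\corr^{-1}\bs{y}/\sigma^2$, and the cancellation of $\sigma$ in $T/\sqrt{W/n}$ reproduces the normalizing factor $\sqrt{n/(\bs{y}\trans\corr^{-1}\bs{y})}$ exactly. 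The degrees of freedom also come out right ($n$ rather than $n-p$) because the model is Simple Kriging with known mean. In short: the paper buys brevity by citation; your argument buys self-containedness at the cost of a page, with no gap.
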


\begin{rmq}
If $n$ exceeds 30, it is usually accepted that the Student t-distribution with $n$ degrees of freedom can be approximated by the standard Normal distribution. As this threshold should be exceeded in practical cases, we would recommend performing all computations as though the Student t-distribution were Normal. 
\end{rmq}

This Proposition implicitely gives the distribution of $y_0 := Y(\bs{x}_0)$ conditionally to $\bs{Y} = \bs{y}$ and $\bs{\theta}$. For later reference, denote it by $L^1(y_0 | \bs{y}, \bs{\theta})$. 
In practice, 
when $\bs{\theta}$ is unknown, the distribution of $y_0 = Y(\bs{x}_0)$ conditionally to $\bs{Y}=\bs{y}$ can be obtained once $\bs{\theta}$ has been sampled from the Gibbs reference posterior distribution:

$$
P(y_0 | \bs{y}) := \int L^1(y_0 | \bs{y}, \bs{\theta}) \pi_G(\bs{\theta} | \bs{y}) d \bs{\theta}.
$$

 Its cdf can be approximated by averaging the cdfs of the Student t-distributions (or their Normal approximations) corresponding to every point in the sample.

\section{Comparisons between the MLE and MAP estimators} \label{Sec:comp_MLE_MAP}

To illustrate the inferential performance of the Gibbs reference posterior distribution, let us introduce the Maximum A Posteriori estimator (MAP). It takes the value $\hat{\bs{\theta}}_{MAP}$ of $\bs{\theta}$ where the density with respect to the Lebesgue measure of the Gibbs reference posterior distribution is largest. We contrast it with the Maximum Likelihood Estimator (MLE) $\hat{\bs{\theta}}_{MLE}$ which does the same with the likelihood function.

\subsection{Methodology}
In this section, we compare the MLE and MAP estimators for accuracy and robustness. \medskip

Our test cases are 3-dimensional Gaussian Processes with Matérn anisotropic geometric correlation kernels with smoothness 5/2. Their mean is the null function, which only leaves us with the matter of estimating their correlation length for each dimension. \medskip

We use uniform designs: our observation points are randomly generated according to the uniform distribution on a cube with side length 1. \medskip

In order to measure the performance of our estimators, we define a suitable distance between two vectors of correlation lengths. Then the error of an estimator is defined as its distance to the ``true'' vector of correlation lengths. \medskip

Let $g$ be the function such that for any $t$ in $(-1,1)$, $g(t) = \argtanh(t)$ and $g(-1)=g(1)=0$. We use the convention that, for any matrix $\bs{M}$ with elements in $[0,1]$, $g(\bs{M})$ is the matrix resulting from applying $g$ to every element of $\bs{M}$.

\begin{defn} \label{Def:distance_fisher}
For a given design set, the distance between two vectors of correlation lengths $\bs{\theta}^1$ and $\bs{\theta}^2$ is
 $\| g(\bs{\Sigma}_{\bs{\theta}^1}) - g(\bs{\Sigma}_{\bs{\theta}^2}) \|$, where $\| \cdot \|$ denotes the Frobenius norm.
\end{defn}

This distance involves applying the Fisher transformation \cite{Hot53} (that is, the inverse hyperbolic tangent function) to every (non-unitary) correlation coefficient in both associated correlation matrices. This is a variance-stabilizing transformation. For any random variables $U$ and $V$ following the normal distribution with mean $0$ and variance $1$, let $\rho$ denote the correlation coefficient between $U$ and $V$ ($-1<\rho<1$). If $(U_i,V_i)$ $(1 \leqslant i \leqslant N)$ are independent copies of $(U,V)$, then  $\hat{\rho} =  \sum_{i=1}^N U_i V_i / n$ is a random variable and $\argtanh(\hat{\rho})$ follows the normal distribution with mean $\argtanh(\rho)$ and variance $1/(N-3)$. So the variance of $\argtanh(\hat{\rho})$ does not depend on $\rho$, whereas the variance of $\hat{\rho}$ does and goes to zero for $|\rho| \rightarrow 1$. Involving the Fisher transformation in the definition of the distance between two vectors of correlation lengths is therefore a way to assert that vectors of correlation lengths can be far apart even if they both lead to highly correlated observations.

This allows us to make sure errors made when estimating near-1 correlation coefficients are no less taken into account than errors made when estimating near-0 correlation coefficients.
\medskip

Let us choose a ``true'' vector of correlation lengths (and also a variance parameter, but this parameter has no effect on either the MLE or the MAP). Then we need to:

\sf

\begin{enumerate}
\item Sample $n$ points randomly according to the uniform distribution on the unit cube (in the following, $n=30$).
\item Generate the observations of the Gaussian Process at the sampled points according to the selected ``true'' variance and correlation lengths.
\item Sample the vector of correlation lengths according to the Gibbs reference posterior distribution $\pi_G(\bs{\theta}|\bs{y})$ through PIGS.
\item Compute the MLE and the MAP of the vector of correlation lengths and their errors.
\item Repeat steps 1 to 4 $m-1$ times (in the following, $m=500$).
\end{enumerate}

\rm

This method allows us to derive an approximate distribution of the errors of both estimators when both the realization of the Gaussian Process and the design set vary. Thus we get to test the robustness of both estimators versus the variability of both the Gaussian Process and the choice of design set.

\subsection{Results}

This subsection provides results obtained on 3-dimensional Gaussian Processes with null mean function and Matérn anisotropic geometric correlation kernels with smoothness 5/2. The results are divided by ``true'' vectors of correlation lengths. In each case, we give in Table \ref{Tab:RMSE_distFisher} the empirical Root Mean Square Errors (RMSEs) of both MLE and MAP estimators 
as functions of  varying instances of the Gaussian Process and uniform design sets on the unit cube. \medskip

Most of the ``true'' vectors of correlation lengths featured in Table \ref{Tab:RMSE_distFisher} were selected in a way to showcase the behavior of both estimators in strongly anisotropic cases, but one (0.5 - 0.5 - 0.5) also showcases their behavior if the true kernel is actually isotropic. And the final one (0.8 - 1 - 0.9) is used to illustrate the performance in the case of a strongly correlated Gaussian Process: this case is fundamentally different from all others, because the Matérn anisotropic geometric family of correlation kernels is designed in such a way that the correlation length with greatest influence is the lowest. Informally speaking, it is enough for one correlation length to be near zero to make the whole process very uncorrelated, even should all other correlation lengths be very high. \medskip

In all studied cases, the MAP estimator was more robust than the MLE estimator: its RMSE (Root Mean Square Error) was between 9 and 15\% lower, as showcased in Table \ref{Tab:RMSE_distFisher}. \medskip

\begin{table}[!ht]
\begin{center}
\begin{tabular}{|c c c c|}
\hline 
\textbf{Corr. lengths} & \textbf{MLE} & \textbf{MAP} & \textbf{ – (\%) } \\ 
\hline 
0.4 – 0.8 – 0.2 & 3.49 & 2.97 & 15 \\

0.5 – 0.5 – 0.5 & 4.00 & 3.46 & 13 \\
 
0.7 – 1.3 – 0.4 & 4.02 & 3.64 & 9  \\

0.8 – 0.3 – 0.6 & 3.75 & 3.26 & 13 \\

0.8 – 1.0 – 0.9 & 4.65 & 4.18 & 10 \\
\hline
\end{tabular} 
\caption{RMSE (where the error is measured in terms of the distance in Definition \ref{Def:distance_fisher} ) of the MLE and MAP estimators for several ``true'' vectors of correlation lengths. The last column displays in percents the decrease of the RMSE of the MAP estimator with respect to the MLE.}
\label{Tab:RMSE_distFisher}
\end{center}
\end{table}

To get a better sense of the distribution of the error when the design set and the realization of the Gaussian Process vary, we give in Figure \ref{fig:erreurFisher} violin plots of the errors in the two most extreme case: very low correlation (0.4 – 0.8 – 0.2) and very high correlation (0.8 – 1.0 – 0.9)

\begin{figure}[!ht]
\begin{subfigure}{0.47\textwidth}
\includegraphics[angle=0,width=1\linewidth, height=7cm]{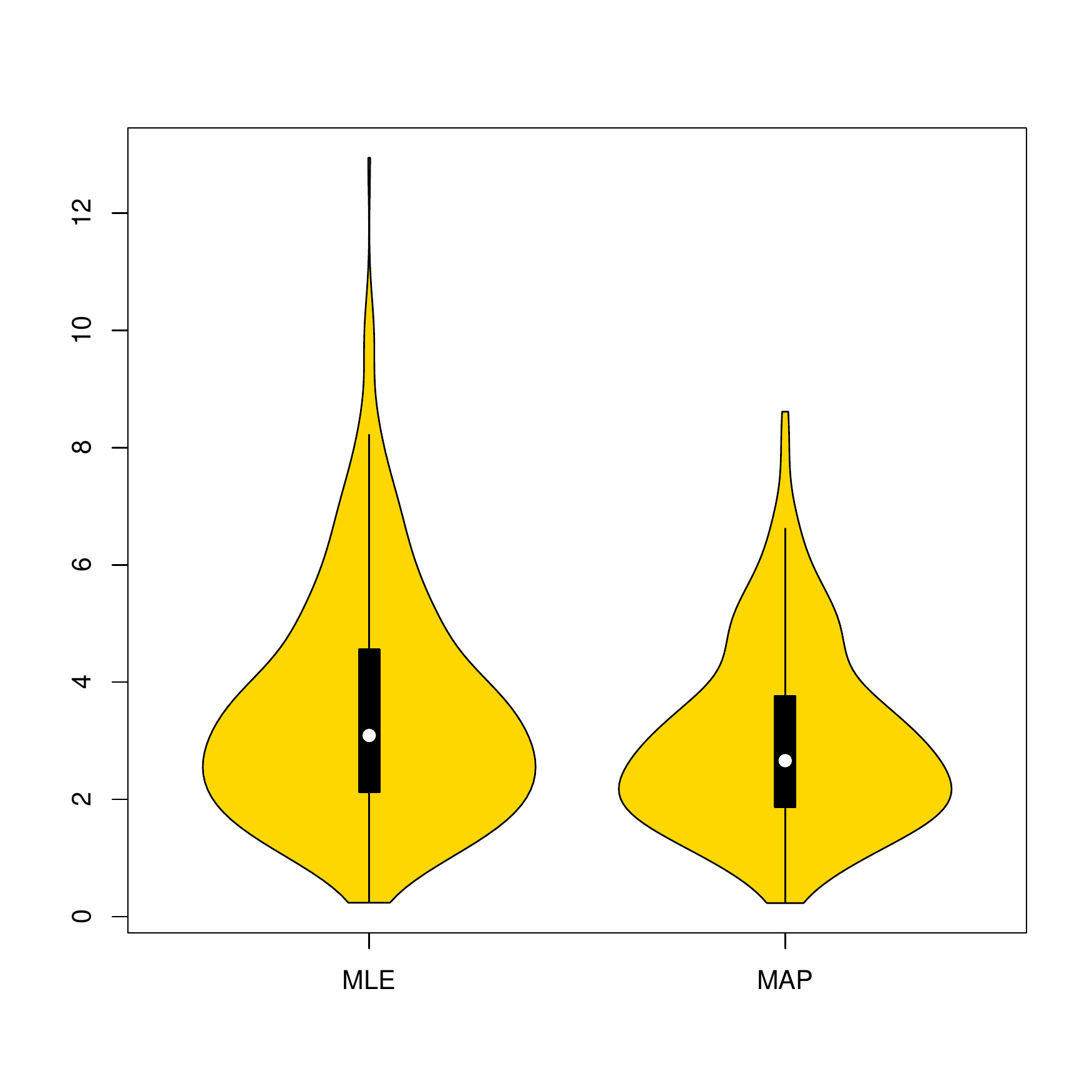}
\end{subfigure}
\begin{subfigure}{0.47\textwidth}
\includegraphics[angle=0,width=1\linewidth, height=7cm]{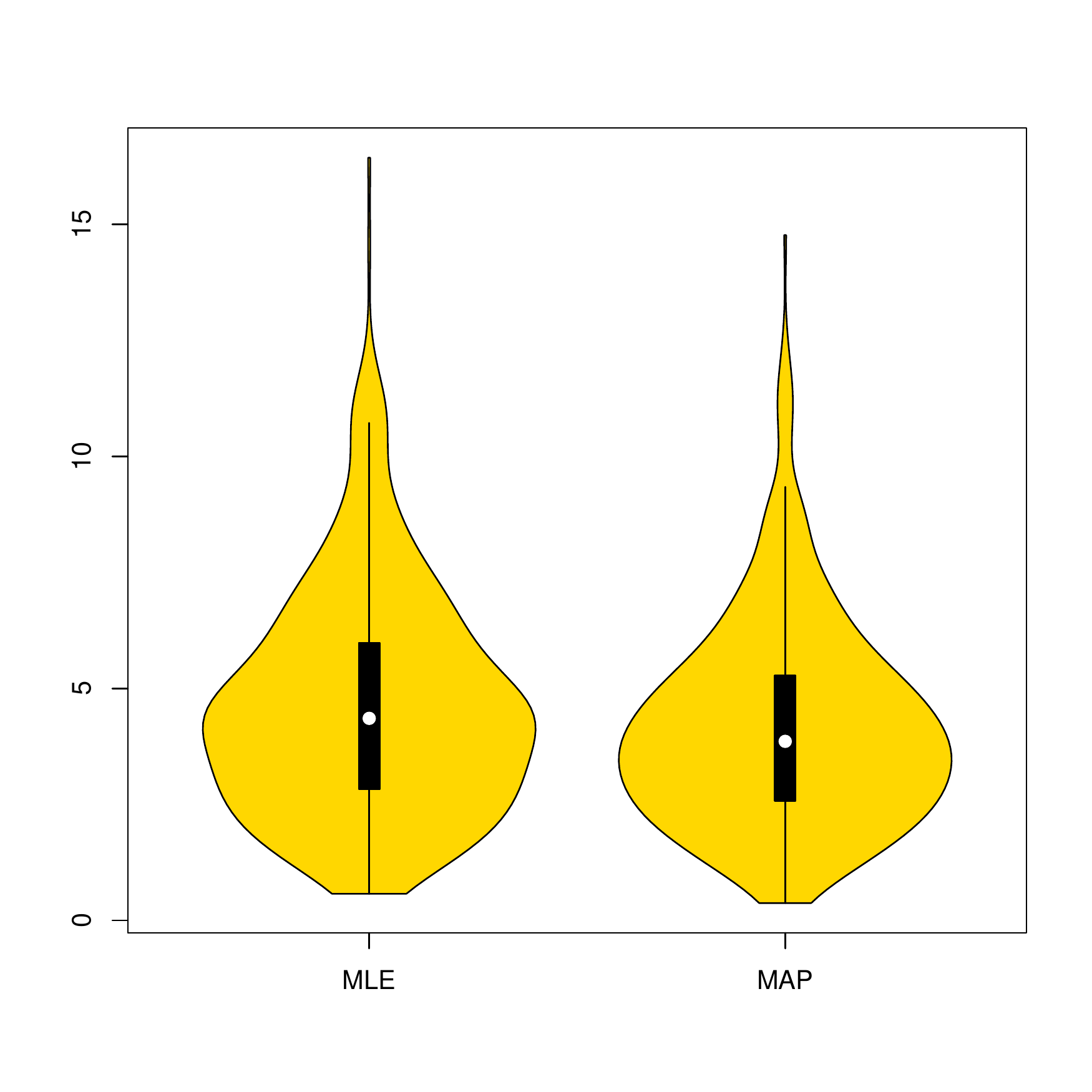}
\end{subfigure}
\caption{Violin plots of the error of the MLE and MAP estimators with respect to a design set following the uniform distribution and a Gaussian Process with correlation lengths 0.4 – 0.8 – 0.2 (left) and 0.8 – 1.0 – 0.9 (right).}
\label{fig:erreurFisher}
\end{figure}

\section{Comparison of the MLE and MAP plug-in distributions and the full posterior predictive distribution}
\label{Sec:comp_predictions}

\subsection{Methodology}

We use the same test cases as before. In this section, our goal is to assess the accuracy of prediction intervals associated with both estimators and with the full posterior distribution. We consider 95\% intervals: the lower bound is the 2.5\% quantile and the upper bound the 97.5\% quantile of plug-in distributions $\hat{P}_{MLE}(y_0  |  \bs{y}) = L^1(y_0  |  \bs{y}, \hat{\bs{\theta}}_{MLE})$, $\hat{P}_{MAP}(\bs{y}_0  |  \bs{y}) = L^1(y_0  |  \bs{y}, \hat{\bs{\theta}}_{MAP})$ and $P(\bs{y}_0  |  \bs{y}) = \int L^1(y_0  |  \bs{y}, \bs{\theta}) \; \pi_G(\bs{\theta}  |  \bs{y}) \; d \bs{\theta}$. For the sake of comprehensiveness, we also consider predictive intervals associated with the ``true'' predictive distribution $L(\bs{y}_0 \; | \; \bs{y}, \sigma^2, \bs{\theta})$, which is the predictive distribution we would use if we knew the correct values of the parameters $\sigma^2$ and $\bs{\theta}$. \medskip

Let us choose  a ``true'' vector of correlation lengths $\bs{\theta}$ (and also a variance parameter $\sigma^2$, but this parameter has no effect on predictive accuracy). Then we do the following:

\sf

\begin{enumerate}
\item Sample $n$ observation points randomly according to the uniform distribution 
on the unit cube (in the following, $n=30$).
\item Generate the observations of the Gaussian Process at the sampled points according to the selected ``true'' variance and correlation lengths.
\item Sample the vector of correlation lengths according to the Gibbs reference posterior distribution $\pi_G(\bs{\theta}|\bs{y})$ through PIGS.
\item Compute the MLE and the MAP of the vector of correlation lengths.
\item Sample $n_0$ test points randomly according to the uniform distribution on the unit cube (in the following, $n_0=100$).
\item At each point, determine the 95\% prediction intervals derived from $L(\bs{y}_0 \; | \; \bs{y}, \sigma^2, \bs{\theta})$ ($\sigma^2$ and $\bs{\theta}$ being the ``true'' parameters), $\hat{P}_{MLE}(\bs{y}_0 \; | \; \bs{y})$, $\hat{P}_{MAP}(\bs{y}_0 \; | \; \bs{y})$ and $P(\bs{y}_0 \; | \; \bs{y})$.
\item Generate the values of the Gaussian Process at the newly sampled points (naturally, do this conditionally to the previously generated observations).
\item Count the number of points within the prediction intervals derived from each of the four distributions.
Divide the counts by $n_0$:  this yields four \textit{coverages} corresponding to each type of predictive intervals.
Also compute the \textit{mean length} of every type of prediction interval.
\item Repeat steps 1 to 8 $m-1$ times (in the following, $m=500$).
\end{enumerate}

\rm

\subsection{Results}

There is no reason for individual coverages of 95\% predictive intervals given by the predictive distribution to be equal to 95\%. Recall that any coverage is given for a unique realization of the Gaussian Process, and that the values of this process at different points are correlated. If the predictive interval at some point fails to cover the true value at this point, it is likely that predictive intervals at neighboring points will also fail to cover the true values at those points, even though the nominal value is 95\% everywhere. Conversely, if it actually covers the true value, then prediction intervals at neighboring points are more than 95\% likely to cover their true values.

In short, prediction intervals give information that is only valid if understood to refer to what can be guessed on the sole basis of the observations made at the design points, which is why coverages for individual realizations of the Gaussian Process are not necessarily 95\% \textit{even if the predictive distribution is perfectly accurate} (\textit{i.e.} based on the true values of $\sigma^2$ and $\bs{\theta}$).

However, \textit{if the predictive distribution is perfectly accurate}, then the average of the coverages is the nominal value: 95\%. It is thus interesting to compute the average of the coverages for all distributions, whether they are plug-in distributions based on the MLE or MAP estimator, or the predictive distribution based on the full posterior distribution (hereafter noted FPD).
In the above described methodology, the average was taken over the realizations of the Gaussian Process with the chosen true parameters and over all design sets with $n$ design points. The results below are obtained in this way. \medskip

The results given in Table \ref{Tab:IP_taux_couverture} show that using the full posterior distribution (FPD) to derive the predictive distribution is the best possible choice from a frequentist point of view as the nominal value is nearly matched by the average coverage. Predictive intervals derived from the MAP estimator do not perform as well, and predictive intervals derived from the MLE perform even worse. \medskip

\begin{table}[!ht]
\begin{center}
\begin{tabular}{|c c c c c|}
\hline 
\textbf{Corr. lengths} & \textbf{True} & \textbf{MLE} & \textbf{MAP} & \textbf{FPD} \\ 
\hline 
0.4 – 0.8 – 0.2 & 0.95 & 0.88 & 0.91 & 0.95 \\

0.5 – 0.5 – 0.5 & 0.95 & 0.89 & 0.90 & 0.94 \\
 
0.7 – 1.3 – 0.4 & 0.95 & 0.90 & 0.92 & 0.95 \\

0.8 – 0.3 – 0.6 & 0.95 & 0.89 & 0.91 & 0.95 \\

0.8 – 1.0 – 0.9 & 0.95 & 0.90 & 0.92 & 0.94 \\
\hline
\end{tabular} 
\caption{Average with respect to randomly sampled design sets and realizations of the Gaussian Process (with variance parameter 1 and smoothness parameter 5/2) of the coverage of 95\% Prediction Intervals across the sample space. ``True'' stands for the prediction based on the knowledge of the true variance parameter and the true vector of correlation lengths.}
\label{Tab:IP_taux_couverture}
\end{center}
\end{table}

Let us now focus on the average (with respect to the 
uniform design sets and realizations of the Gaussian Process) of the mean (over the test set for a given realization of the Gaussian Process and a given uniform design set) length of prediction intervals. The results are given in Table \ref{Tab:IP_longueur_moyenne}, where the figures between parentheses give the increase or decrease (in percents) of the average mean length when compared to the average mean length of prediction intervals obtained using the true values of the parameters. \medskip

\begin{table}[!ht]
\begin{center}
\begin{tabular}{|c c c c c|}
\hline 
\textbf{Corr. lengths} & \textbf{True} & \textbf{MLE} & \textbf{MAP} & \textbf{FPD} \\ 
\hline 
0.4 – 0.8 – 0.2 & 2.23 & 2.05 (-8) & 2.13 (-4) & 2.59 (+16) \\

0.5 – 0.5 – 0.5 & 1.69 & 1.55 (-8) & 1.58 (-6) & 1.84 \; (+9) \\

0.7 – 1.3 – 0.4 & 1.09 & 1.02 (-6) & 1.07 (-2) & 1.21 (+11) \\

0.8 – 0.3 – 0.6 & 1.63 & 1.51 (-7) & 1.56 (-4) & 1.82 (+12) \\

0.8 – 1.0 – 0.9 & 0.71 & 0.66 (-7) & 0.69 (-3) & 0.76 \; (+8)\\
\hline
\end{tabular} 
\caption{Average with respect to randomly sampled design sets and realizations of the Gaussian Process (with variance parameter 1 and smoothness parameter 5/2) of the mean length of 95\% Prediction Intervals across the sample space. The numbers in parentheses represent in percents the increase when using the MLE/MAP/FPD instead of the ``true'' vector of correlation lengths and variance parameter.}
\label{Tab:IP_longueur_moyenne}
\end{center}
\end{table}

Predictive intervals derived from the full posterior distribution (FPD) are on average the largest, but not much larger than predictive intervals derived using the true parameters. In the tests we conducted, they seemed on average to be larger by about one fifth at worst. Predictive intervals derived from the MLE and MAP estimators are on average shorter than those derived from the true parameters. 
This can be interpreted as an under-estimation of the uncertainty of the prediction when fixing the vector
of correlation lengths to the most likely value given the observations, and this can explain the low observed coverage 
in Table~\ref{Tab:IP_taux_couverture}. \medskip

In Figure \ref{fig:IP_violons}, we give violin plots of coverage and mean length of Prediction Intervals in the two most extreme cases: 
correlation lengths 0.4 – 0.8 – 0.2 (very low correlation) and 0.8 – 1.0 – 0.9 (very high correlation).
The results are similar and illustrate the fact that the FPD gives larger intervals in order to reach the derived coverage value.

\begin{figure}[!ht]
\begin{subfigure}{0.45\textwidth}
\includegraphics[angle=0,width=1\linewidth, height=7cm]{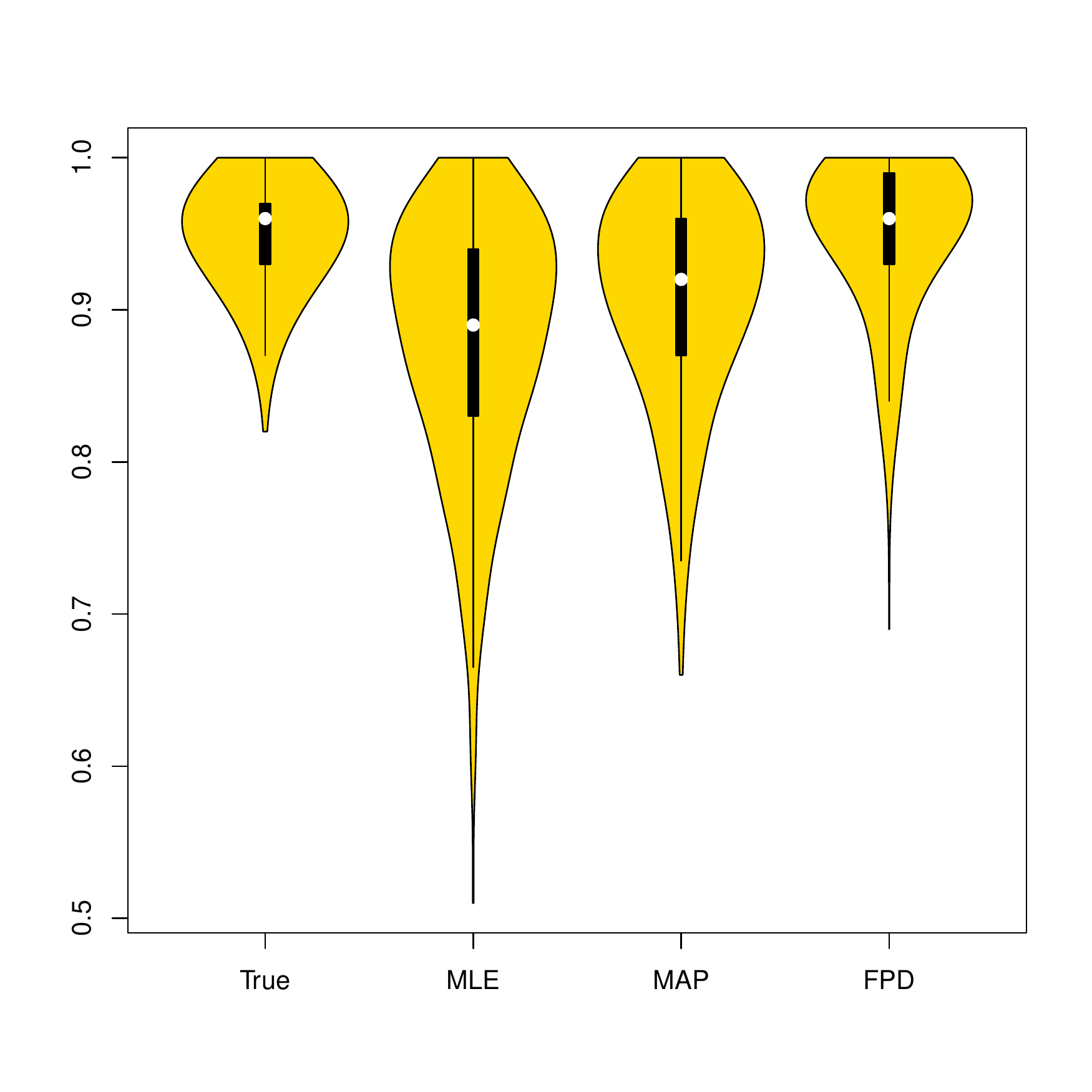}
\end{subfigure}
\begin{subfigure}{0.45\textwidth}
\includegraphics[angle=0,width=1\linewidth, height=7cm]{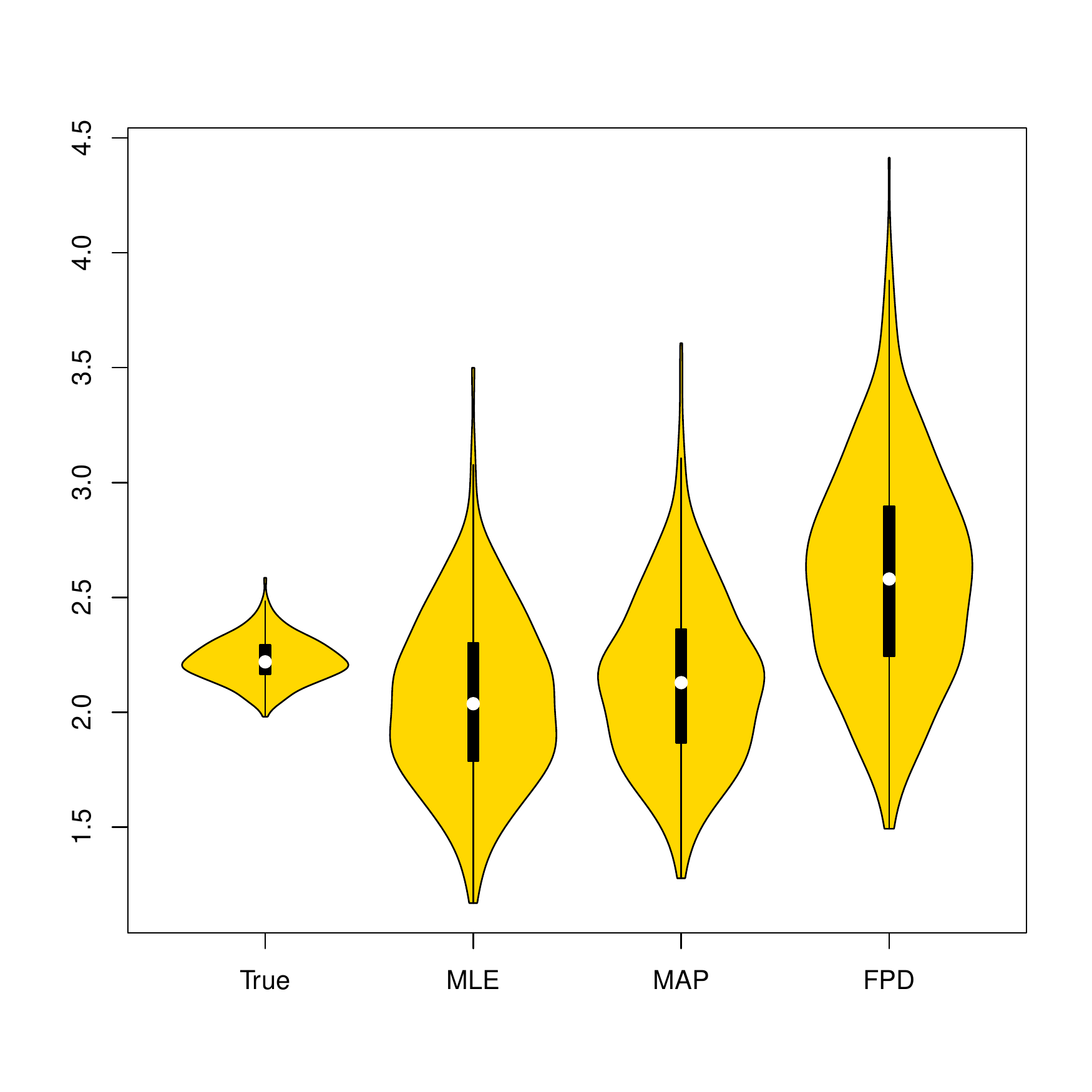}
\end{subfigure}

\begin{subfigure}{0.45\textwidth}
\includegraphics[angle=0,width=1\linewidth, height=7cm]{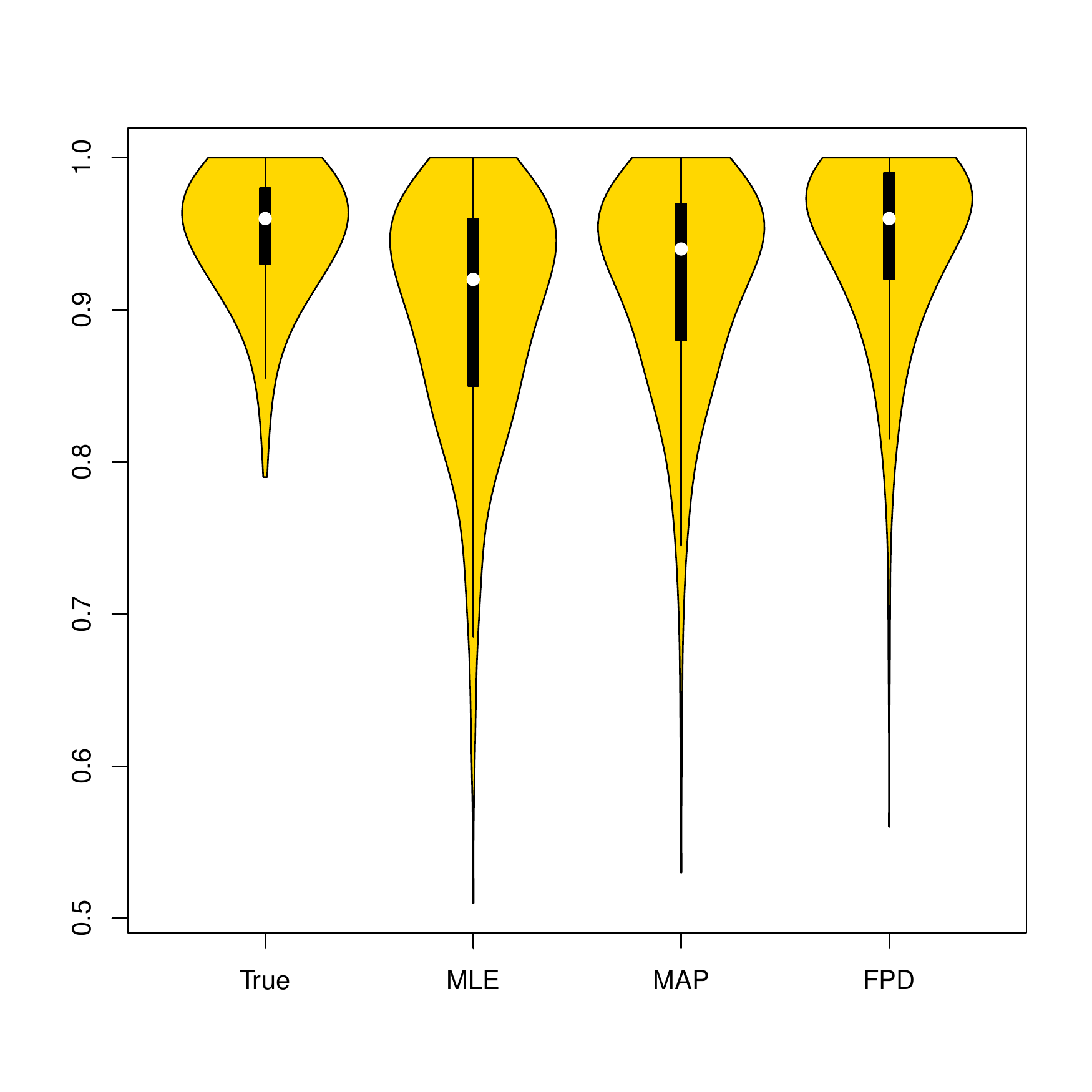}
\end{subfigure}
\begin{subfigure}{0.45\textwidth}
\includegraphics[angle=0,width=1\linewidth, height=7cm]{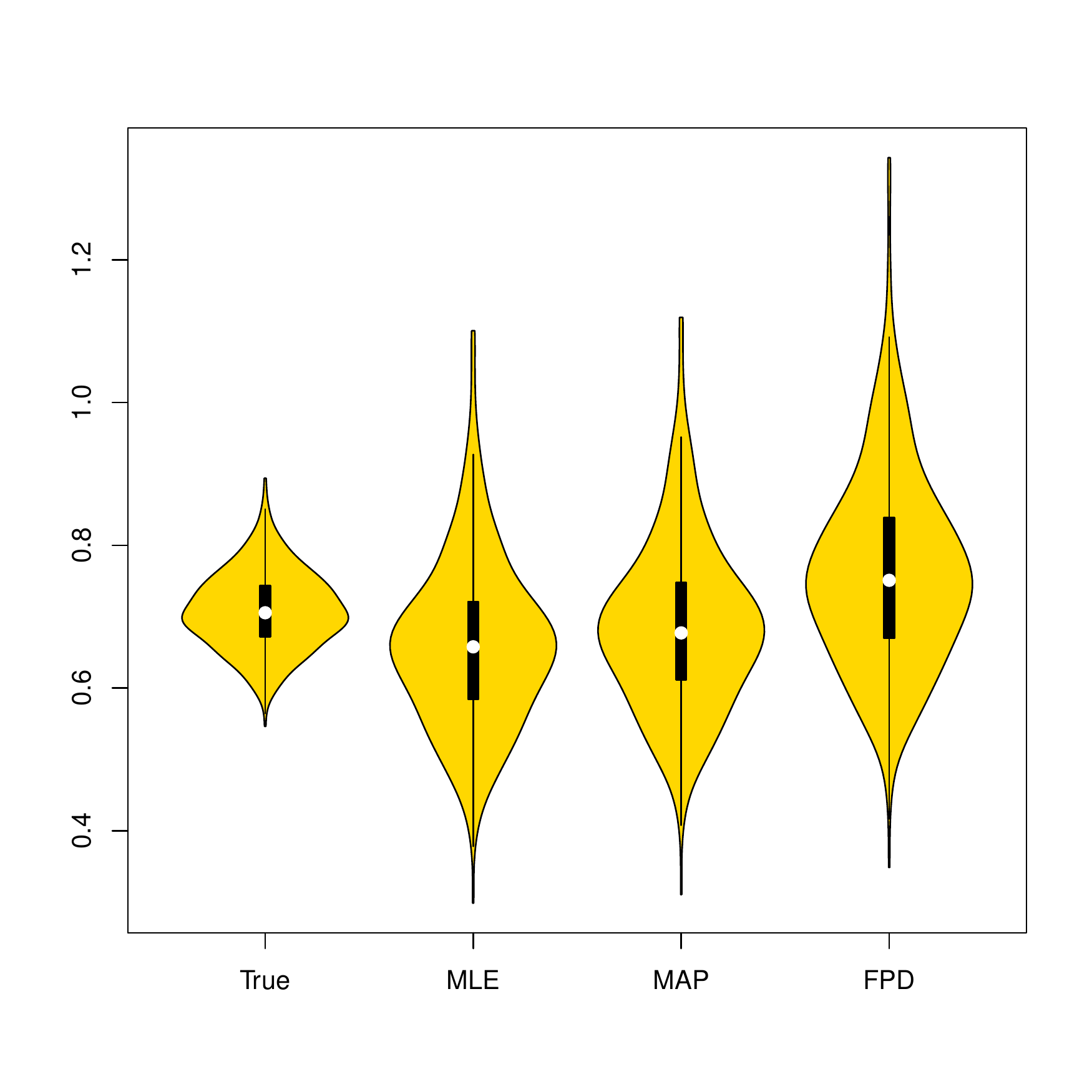}
\end{subfigure}
\caption{Violin plots of the coverage (left) and mean length (right) of Prediction Intervals with respect to a design set following the uniform distribution and a Gaussian Process with correlation lengths 0.4 – 0.8 – 0.2 (top) and 0.8 – 1.0 – 0.9 (bottom).}
\label{fig:IP_violons}
\end{figure}

\subsection{A higher-dimensional case} \label{Sec:Ackley}

In this subsection, we emulate using Simple Kriging the 10-dimensional Ackley function:

\begin{equation}
A(\bs{x}) = 20 + \exp(1) - 20 \exp \left( - 0.2 \sqrt{ \frac{1}{10} \sum_{i=1}^{10} x_i^2} \right) - \exp \left( \frac{1}{10} \sum_{i=1}^{10} \cos(2 \pi x_i) \right).
\end{equation}

The goal in this section is to emulate the Ackley function on the unit hypercube $[0,1]^{10}$ using design sets with 100 observation points. Although the impact of the design set type is not the focus of this study, we present the results with a randomly chosen design according to the Uniform distribution on the domain $[0,1]^{10}$, a design obtained through Latin Hypercube Sampling (LHS), and a design obtained through LHS and subsequently optimized to maximize the minimum distance between two points. The Simple Kriging model uses the null function as mean function and the Matérn anisotropic geometric covariance kernel family with smoothness parameter 5/2. The Gibbs reference posterior distribution is accessed through a sample of 1000 points. The conditional densities are sampled using the Metropolis algorithm with normal instrumental density with standard deviation $0.4$ and a 100-step burn-in period. \medskip

To evaluate the performance of prediction intervals, we follow steps 3, 4, 5, 6 and 8 of the method presented in this section (step 7 is skipped as the ``values of the Gaussian process'' are naturally the values of the Ackley function) with $n_0=1000$. The results are presented in Tables \ref{Tab:Ackley_couverture} and \ref{Tab:Ackley_longueur}. 

\begin{table}[!ht]
\begin{center}
\begin{tabular}{|c c c c|}
\hline 
\textbf{Design set type} & \textbf{MLE} & \textbf{MAP} & \textbf{FPD} \\ 
\hline 
Unoptimized LHS &  0.89 & 0.92 & 0.93 \\
Optimized LHS &  0.74 & 0.76 & 0.80 \\
Random design & 0.87 & 0.88 & 0.91 \\
\hline
\end{tabular} 
\caption{Coverage of 95 \% prediction intervals when emulating the Ackley function on the unit hypercube using a Gaussian Process with null mean function and a Matérn anisotropic geometric covariance kernel with smoothness 5/2, unknown variance parameter and unknown vector of correlation lengths. The design sets contain 100 points. }
\label{Tab:Ackley_couverture}
\end{center}
\end{table}

As is shown in Table \ref{Tab:Ackley_couverture}, prediction intervals derived using the Full Posterior Distribution perform better than those derived from the MAP, which themselves perform better than those derived from the MLE. This order of performance is the same regardless of the type of design set, although the optimized design set leads to much worse performances on average for prediction intervals than unoptimized designs. The latter fact is not surprising since space-filling designs ensure than no two points can be very close to each other, which makes it harder to determine the correlation lengths.

\begin{table}[!ht]
\begin{center}
\begin{tabular}{|c c c c|}
\hline 
\textbf{Design set type} & \textbf{MLE} & \textbf{MAP} & \textbf{FPD} \\ 
\hline
Unoptimized LHS  & 0.31 & 0.33 & 0.35 \\
Optimized LHS & 0.24 & 0.24 & 0.28   \\
Random design & 0.28 & 0.29 & 0.32 \\
\hline
\end{tabular}
\caption{Mean length of 95 \% prediction intervals when emulating the Ackley function on the unit hypercube using a Gaussian Process with null mean function and a Matérn anisotropic geometric covariance kernel with smoothness 5/2, unknown variance parameter and unknown vector of correlation lengths. The design sets contain 100 points. }
\label{Tab:Ackley_longueur}
\end{center}
\end{table}

As expected, prediction intervals derived from the Full Posterior Distribution are on average longer than those derived from the MAP and a fortiori the MLE. Notice that prediction intervals are on average shorter with the optimized design set, which explains the poorer performances in terms of coverage.

\section{Conclusion and Perspectives}

We provided theoretical foundation to the claim that the stationary distribution of the Markov chain underlying PIGS with random scanning order is the optimal compromise between the potentially incompatible conditional distributions. \medskip

This theory is mainly derived from intuitive conceptions of what a compromise should
be. In places where such conceptions were inconclusive, we relied on concrete examples to
precisely determine what was acceptable and what was not in a compromise and used it to
complete the definition.
One strength of this theory is that it can be applied to continuous as well as discrete probability
distributions, whereas previous studies focused on the discrete, or even finite, case. \medskip

A question that remains open outside the finite-state case is how compatibility of conditional distributions is to be checked in practice. Although compromises are useful, not needing them is better. \medskip

Further investigation is needed to fully understand the properties of the optimal compromise. Nevertheless, its invariance by reparametrization and its respect of pairwise independence show that it preserves important features of the  conditional distributions. \medskip

The theory of optimal compromise suggests a framework for deriving a new objective posterior distribution based on the conditionals yielded by the reference prior theory on Simple Kriging parameters. Applying this framework to Matérn anisotropic kernels, we showed prediction to have good frequentist properties. \medskip

Future work should investigate whether this posterior distribution formally corresponds to some joint prior distribution. And if it does, how the joint prior distribution could be accessed, and how it relates to the conditional prior distributions. \medskip

Regarding the specific Kriging application presented in this paper, the next step is to extend the framework to Universal Kriging, where instead of being known, the mean function is only assumed to be a linear combination of known  functions $f_1,...,f_p$. The linear coefficients $\beta_1,...,\beta_p$ are then considered parameters of the model. This extension is of practical relevance, because the mean function can rarely be considered known. It can probably be done in the same way \citet{BDOS01} extended the reference prior from the Simple Kriging to the Universal Kriging framework: they used the flat improper prior as joint prior on $\beta_1,...,\beta_p$ conditional to $\sigma^2$ and $\bs{\theta}$ and used it to integrate $\beta_1,...,\beta_p$ out of the likelihood function, and then proceeded to derive the reference prior on $\sigma^2$ and $\bs{\theta}$ with respect to the integrated likelihood.  \medskip

A further extension would involve deriving an objective prior on the smoothness parameter $\nu$. 
In this endeavor, one should take into account the relation between correlation length $\bs{\theta}$ and smoothness $\nu$. Unfortunately, asymptotic theory is not of much help in this regard, as \citet{And10} shows that provided the spatial domain $\mathcal{D}$ is of dimension at least 5, then all parameters of the Matérn anisotropic geometric kernel are microergodic (\citet{Zha04} shows this to be untrue for spatial domains of dimension 1, 2 or 3, but the non-microergodic parameters are $\sigma^2$ and $\bs{\theta}$, not $\nu$). This means that the Gaussian measures on $\mathcal{D}$ corresponding to Gaussian Processes with two different smoothness parameters are orthogonal, which suggests that there exists a consistent estimator (the MLE possibly). \citet{Ste99} (section 6.6) considers the Fisher information on $\bs{\theta}$ and $\nu$, and gives examples (with a one-dimensional sample space $\mathcal{D}$) showing that the Fisher information on these parameters depends a lot on the design set. 
Fisher information relative to the smoothness parameter $\nu$ increases when design points are chosen to be close to one another (relative to the "true" correlation length $\bs{\theta}$), whereas Fisher information relative to correlation length $\bs{\theta}$ is maximized for design points that are farther apart. This, according to him, is coherent with the fact that $\bs{\theta}$ has greater influence on the low frequency behavior of the Matérn kernel while $\nu$ has greater influence on its high frequency behavior. This also suggests to us that the smoothness parameter $\nu$, like the variance parameter $\sigma^2$, can only be meaningfully estimated if the vector of correlation lengths $\bs{\theta}$ is known. Otherwise, the estimator could hardly tell which design points are close to each other, which intuitively seems a prerequisite to evaluating the smoothness of the process. If we wish to apply the reference prior algorithm to the case where $\nu$ is unknown, we should thus probably derive the reference prior on $\nu$ conditional to $\bs{\theta}$. 

\section*{Acknowledgements}
The author would like to thank his PhD advisor Professor Josselin Garnier (École Polytechnique, Centre de Mathématiques Appliquées) for his guidance, Loic Le Gratiet (EDF R\&D, Chatou) and Anne Dutfoy (EDF R\&D, Saclay) for their advice and helpful suggestions.
I also thank the Associate Editor and both anonymous reviewers for their constructive criticism which considerably improved this paper.

The author acknowledges the support of the French Agence Nationale de la Recherche (ANR), under grant ANR-13-MONU-0005 (project CHORUS).

\pagebreak

\begin{appendices}

\section{Proofs of Section \ref{Sec:compromis_optimal}} \label{App:compromis_optimal}

\begin{proof}[Proof of Proposition \ref{Prop:set_compromises_convex}]
Let $P^{(0)}$ and $P^{(1)}$ be two compromises between $(\pi_i)_{i \in [\![1,r]\!]} $ and let $t \in (0,1)$. Let us check that $(1-t) P^{(0)} + t P^{(1)}$ verifies Conditions (1) and (2) from Definition \ref{Def:compromis}. \medskip

For every $i \in [\![1,r]\!]$, $\pi_i \left( (1-t) P^{(0)} + t P^{(1)} \right)_{-i} = (1-t) \pi_i P_{-i}^{(0)} + t \pi_i  P_{-i}^{(1)}$. \medskip

Let $A$ be a measurable set such that $\left( (1-t) P^{(0)} + t P^{(1)} \right) (A) = 0$. Then $P^{(0)}(A) = 0$ and $P^{(1)}(A) = 0$. Because both $P^{(0)}$ and $P^{(1))}$ are compromises, for every $i \in [\![1,r]\!]$, $ \pi_i  P_{-i}^{(0)}  (A) = 0$ and $ \pi_i  P_{-i}^{(1)}  (A) = 0$. So $\left( \pi_i \left( (1-t) P^{(0)} + t P^{(1)} \right)_{-i} \right) (A) = 0$ and Condition (1) is verified. \medskip

Now, for every $i \in [\![1,r]\!]$,

\begin{align}
\frac{1}{r} \sum_{j=1}^r \left( \pi_j \left( (1-t) P^{(0)} + t P^{(1)} \right)_{-j} \right)_{-i} &= \frac{1-t}{r} \sum_{j=1}^r \left( \pi_j P_{-j}^{(0)} \right)_{-i} + \frac{t}{r} \sum_{j=1}^r \left( \pi_j P_{-j}^{(1)} \right)_{-i} \nonumber \\
&= (1-t) P_{-i}^{(0)} + t P_{-i}^{(1)} \nonumber \\
&= \left( (1-t) P^{(0)} + t P^{(1)} \right)_{-i}.
\end{align}

So Condition (2) is also verified.
\end{proof}

\begin{proof}[Proof of Proposition \ref{Prop:Gibbs_compromise_is_compromise}]
Let $P_G$ be a Gibbs compromise between the sequence of Markov kernels $(\pi_i)_{i \in [\![1,r]\!]}$. 
Then, for any measurable set $A$ such that $P_G(A)=0$,
\begin{equation}
P_G(A) = \frac{1}{r} \sum_{i=1}^r \pi_i (P_G)_{-i} (A) = 0.
\end{equation}
So for every integer $i \in [\![1,r]\!]$, $\pi_i (P_G)_{-i} (A) = 0$. This fulfills the first condition of Definition \ref{Def:compromis}. \medskip
 
Its second condition is also fulfilled because for every integer $i \in [\![1,r]\!]$,
\begin{equation}
(P_G)_{-i} = \left( \frac{1}{r} \sum_{j=1}^r \pi_j (P_G)_{-j} \right)_{-i} = \frac{1}{r} \sum_{j=1}^r \left( \pi_j (P_G)_{-j} \right)_{-i}.
\end{equation}
\end{proof}

\begin{proof}[Proof of Proposition \ref{Prop:hypermarginales_fixes}]

Define the probability distribution $P$ on $\mathcal{A}$ as follows:

\begin{equation} \label{Eq:melange}
P = \frac{1}{r} \sum_{i=1}^r \pi_i m_{\neq i}.
\end{equation}

Then for every $i \in [\![1,r]\!]$ the $i$-th $(r-1)$-marginal distribution $P_{-i}$ of $P$ is given by

\begin{equation} \label{Eq:hypermarginales}
P_{-i} = \frac{1}{r} \sum_{j=1}^r \left( \pi_j m_{\neq j} \right)_{-i} = m_{\neq i},
\end{equation}

where the last equality is due to $(m_{\neq i})_{i \in [\![1,r]\!]}$ being compatible with $(\pi_i)_{i \in [\![1,r]\!]}$. Plugging this into Equation \eqref{Eq:melange}, we obtain that $P$ is a Gibbs compromise. 
Equation \eqref{Eq:hypermarginales} then yields the result.

\end{proof}

\begin{proof}[Proof of Theorem \ref{Thm:Gibbs_compromis_optimal}]
If $P_G$ is the unique Gibbs compromise between the sequence of Markov kernels $(\pi_i)_{i \in [\![1,r]\!]}$, then Proposition \ref{Prop:hypermarginales_fixes} asserts that $\left( (P_G)_{-i} \right)_{i \in [\![1,r]\!]}$ is the only compatible sequence of $(r-1)$-dimensional distributions. So any compromise has the same sequence of $(r-1)$-marginal distributions. Let $P$ be such a compromise.\medskip

For every $i \in [\![1,r]\!]$, $\pi_i (P_G)_{-i} = \pi_i P_{-i}$ is absolutely continuous with respect to $P$
. So the average $P_G$ is also absolutely continuous with respect to $P$. \medskip

Let $\lambda$ be a positive measure on $\mathcal{A}$ such that $P$ is absolutely continuous with respect to $\lambda$.
Because $P_G$ and $P$ share the same sequence of $(r-1)$-marginal distributions, for $\lambda$-almost any $\omega \in \Omega$, 
\begin{equation} \label{Eq:same_marginal_derivative}
\frac{d (\pi_i P_{-i})}{d \lambda} (\omega) = \frac{d (\pi_i (P_G)_{-i})}{d \lambda} (\omega).
\end{equation}

Moreover, Equation \eqref{Eq:melange} implies that for $\lambda$-almost any $\omega \in \Omega$, $ \frac{dP_G}{d \lambda}(\omega)$ is the arithmetic average between the $\frac{d (\pi_i (P_G)_{-i})}{d \lambda}(\omega) $ ($i \in [\![1,r]\!]$), so it minimizes the mean squared error.
Together with Equation \eqref{Eq:same_marginal_derivative}, this implies that for $\lambda$-almost any $\omega \in \Omega$, 
\begin{equation} \label{Equation:mean_minimzes_squared_error}
\sum_{i=1}^r  \left[ \frac{d (\pi_i (P_G)_{-i})}{d \lambda}(\omega) - \frac{dP_G}{d \lambda}(\omega) \right]^2
\leqslant \sum_{i=1}^r  \left[ \frac{d (\pi_i P_{-i})}{d \lambda}(\omega) - \frac{dP}{d \lambda}(\omega) \right]^2.
\end{equation}

Consequently, $E_\lambda(P_G) \leqslant E_\lambda(P)$. Moreover, if $P \neq P_G$, then there exists $S \in \mathcal{A}$ such that $\lambda(S)>0$ and for every $\omega \in S$, 

\begin{equation}
\forall i \in [\![1,r]\!] \quad \frac{d (\pi_i P_{-i})}{d \lambda} (\omega) = \frac{d (\pi_i (P_G)_{-i})}{d \lambda} (\omega) \quad \mathrm{and} \quad \frac{dP}{d \lambda} (\omega) \neq \frac{d (P_G)}{d \lambda} (\omega).
\end{equation}

So for every $\omega \in S$, Equation \eqref{Equation:mean_minimzes_squared_error} is a strict inequality and thus $E_\lambda(P_G) < E_\lambda(P)$. $P_G$ is therefore the unique optimal compromise with respect to $\lambda$.
\end{proof}

\begin{proof}[Proof of Proposition \ref{Prop:invariant_reparametrisation}]
For every $i \in [\![1,r]\!]$, for every $\tilde{S}_i \in \tilde{\mathcal{A}}_i$,

\begin{align}
\tilde{P}_G \left( \bigtimes_{i \in [\![1,r]\!]} \tilde{S}_i \right)
&= P_G \left( f^{-1} \left(  \bigtimes_{i \in [\![1,r]\!]} \tilde{S}_i \right) \right) \nonumber \\
&= P_G \left( \bigtimes_{i \in [\![1,r]\!]} f_i^{-1} (\tilde{S}_i) \right)  \nonumber \\
&= \frac{1}{r} \sum_{i=1}^r \int_{ \bigtimes_{j \neq i} f_j^{-1} (\tilde{S}_j) } \pi_i(f_i^{-1} (\tilde{S}_i) | \omega_{-i})  d \left\{ (P_G)_{-i} \right\} (\omega_{-i}) \nonumber \\
&= \frac{1}{r} \sum_{i=1}^r \int_{ \bigtimes_{j \neq i} f_j^{-1} (\tilde{S}_j) } \tilde{\pi}_i(\tilde{S}_i | f_{-i}(\omega_{-i}) )  d \left\{ (P_G)_{-i} \right\} (\omega_{-i}) \nonumber \\
&= \frac{1}{r} \sum_{i=1}^r \int_{ \bigtimes_{j \neq i} \tilde{S}_j } \tilde{\pi}_i(\tilde{S}_i | \tilde{\omega}_{-i} )  d \left\{ (P_G)_{-i} \ast f_{-i} \right\} (\tilde{\omega}_{-i}) . \label{Eq:pushforward_Gibbs_compromise}
\end{align}

Now, for every $i \in [\![1,r]\!]$, for every $\tilde{T}_i \in \tilde{\mathcal{A}}_i$,

\begin{align}
(P_G)_{-i} \ast f_{-i} \left( \bigtimes_{j \neq i} \tilde{T}_j \right)
&= (P_G)_{-i} \left( \bigtimes_{j \neq i} f_j^{-1} (\tilde{T}_j)  \right) \nonumber \\
&= P_G \left( \bigtimes_{j<i} f_j^{-1} (\tilde{T}_j) \times f_i^{-1} (\tilde{\Omega}_i) \times \bigtimes_{k>i} f_k^{-1} (\tilde{T}_k) \right) \nonumber \\
&= P_G \ast f \left( \bigtimes_{j<i} \tilde{T}_j \times \tilde{\Omega}_i \bigtimes_{k>i} \tilde{T}_k \right) \nonumber \\
&= (P_G \ast f)_{-i} \left( \bigtimes_{j \neq i} \tilde{T}_j  \right).
\end{align}

So $(P_G)_{-i} \ast f_{-i} = (P_G \ast f)_{-i} = (\tilde{P}_G)_{-i}$. Then, returning to Equation \eqref{Eq:pushforward_Gibbs_compromise},

\begin{equation}
\tilde{P}_G \left( \bigtimes_{i \in [\![1,r]\!]} \tilde{S}_i \right)
= \frac{1}{r} \sum_{i=1}^r \int_{ \bigtimes_{j \neq i} \tilde{S}_j } \tilde{\pi}_i(\tilde{S}_i | \tilde{\omega}_{-i} )  d \left\{ (\tilde{P}_G)_{-i} \right\} (\tilde{\omega}_{-i}) . 
\end{equation}

Finally, we obtain

\begin{equation}
 \tilde{P}_G = \frac{1}{r} \sum_{i=1}^r \tilde{\pi}_i (\tilde{P}_G)_{-i}.
\end{equation}

$\tilde{P}_G$ is therefore a Gibbs compromise between the sequence of Markov kernels $(\tilde{\pi}_i)_{i \in [\![1,r]\!]}$. We now prove its uniqueness. For every $i \in [\![1,r]\!]$, $f_i$ is bijective and $f_i^{-1}$ is measurable. So for any Gibbs compromise $\tilde{Q}_G$ between the sequence of Markov kernels $(\tilde{\pi}_i)_{i \in [\![1,r]\!]}$, $\tilde{Q}_G \ast f^{-1}$ is a Gibbs compromise between the sequence of Markov kernels $(\pi_i)_{i \in [\![1,r]\!]}$. Given $P_G$ is the unique Gibbs compromise between the Markov kernels in this sequence, $\tilde{Q}_G \ast f^{-1} = P_G$, so $\tilde{Q}_G = \tilde{Q}_G \ast f^{-1} \ast f = P_G \ast f = \tilde{P}_G$.

\end{proof}

\section{Proofs of Section \ref{Sec:convergence_Gibbs_algorithm}} \label{App:noyaux_Matérn}

The following holds where there is no mention of the contrary.
When applied to a vector, $\| \cdot \|$ denotes the Euclidean norm and when applied to a matrix, it denotes the Frobenius norm.
The choice of norm does not matter much because in finite-dimensional vector spaces, all norms are equivalent.

\subsection{Differentiating the Matérn correlation kernel}

\begin{lem} \label{Lem:derivee_matern_tens}
The partial derivative with respect to $ \mu_i $ of the Matérn tensorized kernel of variance $ \sigma^2 $, smoothness $ \nu $ and inverse correlation length vector $ \bs{ \mu } $ is:
\begin{equation} \label{Eq:deriv_tens}
\frac{ \partial }{ \partial \mu_i } \left( \sigma^2 K_{r,\nu}^{tens} \left(  \bs{x} \bs{ \mu } \right)  \right) = - \frac{ \sigma^2 (2 \sqrt{\nu} )^2 }{ \Gamma(\nu) 2^{\nu-1} } 
|x_i|^2 \mu_i
\left( 2\sqrt{\nu} |x_i| \mu_i \right)^{\nu-1}
\mathcal{K}_{\nu-1} \left( 2 \sqrt{\nu} |x_i| \mu_i \right)
\prod_{j \neq i} K_{1,\nu} \left( |x_j| \mu_j \right).
\end{equation}
This can be rewritten as:
\begin{equation} \label{Eq:deriv_tens_3cas}
\frac{ \partial }{ \partial \mu_i } \left( \sigma^2 K_{r,\nu}^{tens} \left(  \bs{x} \bs{ \mu } \right)  \right) =  \left\{
    \begin{array}{lll}
         \sigma^2 \frac{2\nu}{\nu-1} |x_i|^2 \mu_i K_{1,\nu-1} \left( |x_i| \mu_i \right)  \prod_{j \neq i} K_{1,\nu} \left( |x_j| \mu_j \right)  &\mbox{if } \nu > 1\\
         \sigma^2 4  |x_i|^2 \mu_i
        \mathcal{K}_{0} \left( 2 |x_i| \mu_i \right)
		 \prod_{j \neq i} K_{1,\nu} \left( |x_j| \mu_j \right) & \mbox{if } \nu = 1 \\
		 \sigma^2 2 \nu^{\nu}  \frac{ \Gamma(1-\nu)}{\Gamma(\nu)} |x_i|^{2\nu} \mu_i^{2\nu-1}
		K_{1,1-\nu} \left( |x_i| \mu_i \right)
		 \prod_{j \neq i} K_{1,\nu} \left( |x_j| \mu_j \right) & \mbox{if } \nu < 1. \\
    \end{array}
\right.
\end{equation}
\end{lem}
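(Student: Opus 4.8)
The plan is to reduce everything to the differentiation of a single one-dimensional Matérn factor and then invoke one standard identity for the modified Bessel function $\mathcal{K}_\nu$. Since by definition $\sigma^2 K_{r,\nu}^{tens}(\bs{x}\bs{\mu}) = \sigma^2\prod_{j=1}^r K_{1,\nu}(x_j\mu_j)$ and only the $i$-th factor depends on $\mu_i$, the Leibniz product rule gives immediately
\begin{equation}
\frac{\partial}{\partial\mu_i}\left(\sigma^2 K_{r,\nu}^{tens}(\bs{x}\bs{\mu})\right) = \sigma^2\left(\frac{\partial}{\partial\mu_i}K_{1,\nu}(x_i\mu_i)\right)\prod_{j\neq i}K_{1,\nu}(x_j\mu_j),
\end{equation}
so the whole problem collapses to computing $\frac{\partial}{\partial\mu_i}K_{1,\nu}(x_i\mu_i)$. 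Because $\bs{\mu}\in(0,+\infty)^r$, I would set $w = 2\sqrt{\nu}|x_i|\mu_i$, write $K_{1,\nu}(x_i\mu_i) = \frac{1}{\Gamma(\nu)2^{\nu-1}}\,w^\nu\mathcal{K}_\nu(w)$ using \eqref{Eq:Matern_anis_geom} in dimension one, and note that $\frac{dw}{d\mu_i} = 2\sqrt{\nu}|x_i|$.

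The key analytic ingredient is the classical identity $\frac{d}{dw}\left[w^\nu\mathcal{K}_\nu(w)\right] = -w^\nu\mathcal{K}_{\nu-1}(w)$, which follows from the recurrence relations for modified Bessel functions. Combining it with the chain rule yields
\begin{equation}
\frac{\partial}{\partial\mu_i}K_{1,\nu}(x_i\mu_i) = -\frac{2\sqrt{\nu}|x_i|}{\Gamma(\nu)2^{\nu-1}}\,w^\nu\mathcal{K}_{\nu-1}(w),
\end{equation}
and factoring $w^\nu = w^{\nu-1}\cdot 2\sqrt{\nu}|x_i|\mu_i$ reproduces exactly \eqref{Eq:deriv_tens}. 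This settles the first formula; no case distinction is needed here.

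For the three-case reformulation \eqref{Eq:deriv_tens_3cas}, I would rewrite the factor $w^{\nu-1}\mathcal{K}_{\nu-1}(w)$ as a lower-order Matérn kernel, tracking the normalization constants (powers of $2$, the factor $\sqrt{\nu}$, and the $\Gamma$-ratios). When $\nu>1$, the quantity $w^{\nu-1}\mathcal{K}_{\nu-1}(w)$ is, up to the constant $\Gamma(\nu-1)2^{\nu-2}$, a Matérn kernel of smoothness $\nu-1$; simplifying the $\Gamma$-ratio via $\Gamma(\nu) = (\nu-1)\Gamma(\nu-1)$ produces the coefficient $\frac{2\nu}{\nu-1}$ and the term $K_{1,\nu-1}(|x_i|\mu_i)$. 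When $\nu<1$ the order $\nu-1$ is negative, so I would invoke the symmetry $\mathcal{K}_{-a}=\mathcal{K}_{a}$ to replace $\mathcal{K}_{\nu-1}$ by $\mathcal{K}_{1-\nu}$ and then recognize a Matérn kernel of smoothness $1-\nu$. The passage from order $\nu-1$ to $1-\nu$ multiplies by an extra $w^{2(\nu-1)}$, which is precisely what turns the prefactor $|x_i|^2\mu_i$ into $|x_i|^{2\nu}\mu_i^{2\nu-1}$ and leaves the constant $2\nu^{\nu}\,\Gamma(1-\nu)/\Gamma(\nu)$ multiplying $K_{1,1-\nu}(|x_i|\mu_i)$.

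The genuinely special point, requiring separate treatment, is $\nu=1$: here $\nu-1=0$, so $w^{\nu-1}=1$ and the would-be Matérn kernel of smoothness $0$ degenerates, its normalization $\Gamma(\nu-1)$ diverging. One therefore cannot route the result through $K_{1,\nu-1}$ and must keep $\mathcal{K}_0(2|x_i|\mu_i)$ explicitly, which gives the middle line of \eqref{Eq:deriv_tens_3cas}. I expect the main obstacle to be purely the constant bookkeeping: checking that the assorted powers of $2$, of $\sqrt{\nu}$, and the Gamma-function ratios collapse to the stated coefficients in each regime, and isolating the boundary value $\nu=1$ cleanly. Beyond the single Bessel derivative identity and the reflection $\mathcal{K}_{-a}=\mathcal{K}_{a}$, no deep argument is involved; the effort lies entirely in the careful algebra.
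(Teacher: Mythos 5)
Your proposal is correct and follows essentially the same route as the paper's own proof: reduce to the single $\mu_i$-dependent factor via the product rule, apply the Bessel identity $\frac{d}{dy}\left[y^{\nu}\mathcal{K}_{\nu}(y)\right]=-y^{\nu}\mathcal{K}_{\nu-1}(y)$ (Abramowitz--Stegun 9.6.28), and then obtain the three-case form from $\Gamma(\nu)=(\nu-1)\Gamma(\nu-1)$ for $\nu>1$, the reflection $\mathcal{K}_{\nu-1}=\mathcal{K}_{1-\nu}$ for $\nu<1$, and an explicit $\mathcal{K}_0$ at $\nu=1$. No substantive difference from the published argument.
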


\begin{proof}
The first assertion is a simple matter of differentiating Equation \eqref{Eq:Matern_tens}.
In the following calculation, the fourth line is given by formula 9.6.28 (page 376) in \citet{AS64}.

\begin{equation}
\begin{split}
\frac{ \partial }{ \partial \mu_i } \left( \sigma^2 K_{r,\nu}^{tens} \left(  \bs{x} \bs{ \mu } \right)  \right) &=
\sigma^2 \frac{ \partial }{ \partial \mu_i } \left(  K_{1,\nu} \left( x_i \mu_i \right)  \right)
\prod_{j \neq i} K_{1,\nu} \left( |x_j| \mu_j \right) \\
&= \sigma^2 x_i  \left(  K_{1,\nu}' \left( x_i  \mu_i \right)  \right)
\prod_{j \neq i} K_{1,\nu} \left( |x_j| \mu_j \right) \\
&= \sigma^2 x_i
 \left(  
\frac{ 2 \sqrt{\nu} }{ \Gamma(\nu) 2^{\nu-1} }
\left. \frac{d}{dy} \right|_{ y = 2 \sqrt{\nu} x_i \mu_i }
[ y^{\nu} \mathcal{K}_\nu (y) ]
  \right)
\prod_{j \neq i} K_{1,\nu} \left( |x_j| \mu_j \right) \\
&= \sigma^2 x_i
 \left(  
\frac{ 2 \sqrt{\nu} }{ \Gamma(\nu) 2^{\nu-1} }
[ -y \cdot y^{\nu-1} \mathcal{K}_{\nu-1} (y) ]_{ y = 2 \sqrt{\nu} x_i \mu_i }
  \right)
\prod_{j \neq i} K_{1,\nu} \left( |x_j| \mu_j \right). \\
\end{split}
\end{equation}

From there, Equation \eqref{Eq:deriv_tens} follows immediately. Rewriting it in the form given in (\ref{Eq:deriv_tens_3cas}) only requires us to recall $ \Gamma(\nu) = (\nu-1) \Gamma(\nu-1) $ (case $\nu>1$), $ \Gamma(1)=1 $ (case $\nu=1$) and $ \mathcal{K}_{\nu-1} = \mathcal{K}_{1-\nu} $ (case $\nu<1$).
\end{proof}

\begin{lem} \label{Lem:derivee_matern_anisgeom}
The partial derivative with respect to $ \mu_i $ of the Matérn geometric anisotropic kernel of variance $ \sigma^2 $, smoothness $ \nu $ and inverse correlation length vector $ \bs{ \mu } $ is:
\begin{equation} \label{Eq:deriv_anis_geom}
\frac{ \partial }{ \partial \mu_i } \left( \sigma^2 K_{r,\nu} \left( \bs{x} \bs{ \mu } \right)  \right) =  \frac{ \sigma^2 (2 \sqrt{\nu} )^2 }{ \Gamma(\nu) 2^{\nu-1} } 
|x_i|^2 \mu_i
\left( 2\sqrt{\nu} \left\| \bs{x} \bs{\mu} \right\| \right)^{\nu-1}
\mathcal{K}_{\nu-1} \left( 2 \sqrt{\nu} \left\| \bs{x} \bs{\mu} \right\| \right).
\end{equation}
This can be rewritten as:
\begin{equation} \label{Eq:deriv_anis_geom_3cas}
\frac{ \partial }{ \partial \mu_i } \left( \sigma^2 K_{r,\nu} \left( \bs{x} \bs{ \mu } \right)  \right) = \left\{
    \begin{array}{lll}
         \sigma^2 \frac{2\nu}{\nu-1} |x_i|^2 \mu_i K_{1,\nu-1} \left( \left\| \bs{x} \bs{\mu}  \right\| \right) & \mbox{if } \nu > 1\\
         \sigma^2 4 |x_i|^2 \mu_i
        \mathcal{K}_{0} \left( 2 \left\| \bs{x} \bs{\mu}  \right\| \right) & \mbox{if } \nu = 1 \\
		 \sigma^2 2 \nu^{\nu} \frac{ \Gamma(1-\nu)}{\Gamma(\nu)} 
		\frac{1}{\mu_i}
		\left( \frac{ |x_i| \mu_i }{ \| \bs{x}  \bs{\mu} \|^{1-\nu} } \right)^2
		K_{1,1-\nu} \left( \left\| \bs{x} \bs{\mu} \right\| \right) &\mbox{if } \nu < 1. \\
    \end{array}
\right.
\end{equation}
\end{lem}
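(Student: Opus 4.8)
The plan is to follow the same route as the proof of Lemma~\ref{Lem:derivee_matern_tens}, exploiting the one structural difference between the two kernels: whereas the tensorized kernel factorizes across coordinates, the geometric anisotropic kernel $\bs{x} \mapsto \sigma^2 K_{r,\nu}(\bs{x}\bs{\mu})$ is \emph{radial}, depending on $\bs{\mu}$ only through the scalar $\|\bs{x}\bs{\mu}\| = \sqrt{\sum_{j=1}^r x_j^2 \mu_j^2}$. First I would write $K_{r,\nu}(\bs{x}\bs{\mu}) = \phi(\|\bs{x}\bs{\mu}\|)$, where $\phi(s) = \frac{1}{\Gamma(\nu) 2^{\nu-1}} (2\sqrt{\nu}\,s)^\nu \mathcal{K}_\nu(2\sqrt{\nu}\,s)$ is the radial profile read off from \eqref{Eq:Matern_anis_geom}, and then differentiate by the chain rule, obtaining $\frac{\partial}{\partial \mu_i} \sigma^2 \phi(\|\bs{x}\bs{\mu}\|) = \sigma^2 \phi'(\|\bs{x}\bs{\mu}\|)\, \frac{\partial}{\partial \mu_i}\|\bs{x}\bs{\mu}\|$.

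Two elementary computations feed into this. The inner derivative is $\frac{\partial}{\partial \mu_i}\|\bs{x}\bs{\mu}\| = |x_i|^2 \mu_i / \|\bs{x}\bs{\mu}\|$, which is well defined since $\bs{x}$ is a difference of two distinct design points and $\bs{\mu} \in (0,+\infty)^r$, so $\bs{x}\bs{\mu} \neq \bs{0}$. The radial derivative $\phi'$ is obtained exactly as in Lemma~\ref{Lem:derivee_matern_tens}, via formula 9.6.28 of \citet{AS64} in the form $\frac{d}{dy}[y^\nu \mathcal{K}_\nu(y)] = -y^\nu \mathcal{K}_{\nu-1}(y)$, which gives $\phi'(s) = -\frac{2\sqrt{\nu}}{\Gamma(\nu) 2^{\nu-1}} (2\sqrt{\nu}\,s)^\nu \mathcal{K}_{\nu-1}(2\sqrt{\nu}\,s)$. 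Substituting both into the chain rule and cancelling one power of $\|\bs{x}\bs{\mu}\|$ against the factor $(2\sqrt{\nu}\|\bs{x}\bs{\mu}\|)^\nu$ (dropping its exponent to $\nu-1$) collects the constant $(2\sqrt{\nu})^2 / (\Gamma(\nu) 2^{\nu-1})$ and yields \eqref{Eq:deriv_anis_geom}.

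For the three-case form \eqref{Eq:deriv_anis_geom_3cas} I would reuse verbatim the identities invoked in Lemma~\ref{Lem:derivee_matern_tens}: the recurrence $\Gamma(\nu) = (\nu-1)\Gamma(\nu-1)$ to repackage the $\nu>1$ case as a multiple of $K_{1,\nu-1}(\|\bs{x}\bs{\mu}\|)$; the value $\Gamma(1) = 1$ for the $\nu=1$ case, where $\mathcal{K}_{\nu-1} = \mathcal{K}_0$; and the symmetry $\mathcal{K}_{\nu-1} = \mathcal{K}_{1-\nu}$ for the $\nu<1$ case, rewritten through $K_{1,1-\nu}(\|\bs{x}\bs{\mu}\|)$. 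The bookkeeping here is identical to the separable lemma except that every occurrence of the single-coordinate argument $|x_i|\mu_i$ is replaced by the norm $\|\bs{x}\bs{\mu}\|$.

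The only genuine obstacle, and the sole place the argument departs from the tensorized lemma, is the differentiation of the Euclidean norm: here all coordinates are coupled, so the separable product $\prod_{j \neq i} K_{1,\nu}(|x_j|\mu_j)$ appearing in Lemma~\ref{Lem:derivee_matern_tens} collapses into a single radial factor evaluated at $\|\bs{x}\bs{\mu}\|$, and one must keep careful track of the scaling constant $2\sqrt{\nu}$ when re-expressing $(2\sqrt{\nu}\|\bs{x}\bs{\mu}\|)^{\nu-1}\mathcal{K}_{\nu-1}$ in terms of the lower-smoothness kernels $K_{1,\nu-1}$ and $K_{1,1-\nu}$. Everything else is the routine calculus already performed in the proof of Lemma~\ref{Lem:derivee_matern_tens}.
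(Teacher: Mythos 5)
Your proposal is correct and follows essentially the same route as the paper's own proof: write the anisotropic geometric kernel as the one-dimensional radial profile $K_{1,\nu}$ evaluated at $\|\bs{x}\bs{\mu}\|$, apply the chain rule with $\partial_{\mu_i}\|\bs{x}\bs{\mu}\| = x_i^2\mu_i/\|\bs{x}\bs{\mu}\|$, invoke formula 9.6.28 of Abramowitz and Stegun for $\frac{d}{dy}[y^{\nu}\mathcal{K}_{\nu}(y)]$, and finish the three-case form with $\Gamma(\nu)=(\nu-1)\Gamma(\nu-1)$, $\Gamma(1)=1$ and $\mathcal{K}_{\nu-1}=\mathcal{K}_{1-\nu}$. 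The only differences are cosmetic (naming the radial profile $\phi$ and noting explicitly that $\bs{x}\bs{\mu}\neq\bs{0}$).
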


\begin{proof}
The first assertion is a simple matter of differentiating Equation \eqref{Eq:Matern_anis_geom}.
In the following calculation, the fourth line is given by formula 9.6.28 (page 376) in \citet{AS64}.

\begin{equation}
\begin{split}
\frac{ \partial }{ \partial \mu_i } \left( \sigma^2 K_{r,\nu} \left( \bs{x} \bs{ \mu } \right)  \right) 
&= \sigma^2 \frac{ \partial }{ \partial \mu_i } \left( K_{1,\nu} \left( \left\| \bs{x} \bs{ \mu } \right\|  \right) \right) \\
&= \sigma^2 x_i^2 \mu_i \left\| \bs{x} \bs{ \mu } \right\|^{-1}
K_{1,\nu}' \left( \left\| \bs{x} \bs{ \mu } \right\|  \right) \\
&= \sigma^2 x_i^2 \mu_i \left\| \bs{x} \bs{ \mu } \right\|^{-1}
 \left(  
\frac{ 2 \sqrt{\nu} }{ \Gamma(\nu) 2^{\nu-1} }
\left. \frac{d}{dy} \right|_{ y = 2 \sqrt{\nu} \left\| \bs{x} \bs{ \mu } \right\| }
[ y^{\nu} \mathcal{K}_\nu (y) ]
  \right) \\
&= \sigma^2 x_i^2 \mu_i \left\| \bs{x} \bs{ \mu } \right\|^{-1}
 \left(  
\frac{ 2 \sqrt{\nu} }{ \Gamma(\nu) 2^{\nu-1} }
[ -y \cdot y^{\nu-1} \mathcal{K}_{\nu-1} (y) ]_{ y = 2 \sqrt{\nu} \left\| \bs{x} \bs{ \mu } \right\| }
  \right). \\
\end{split}
\end{equation}

From there, Equation \eqref{Eq:deriv_anis_geom} follows immediately. Rewriting it in the form given in (\ref{Eq:deriv_anis_geom_3cas}) only requires us to recall $ \Gamma(\nu) = (\nu-1) \Gamma(\nu-1) $ (case $\nu>1$), $ \Gamma(1)=1 $ (case $\nu=1$) and $ \mathcal{K}_{\nu-1} = \mathcal{K}_{1-\nu} $ (case $\nu<1$).
\end{proof}

\subsection{Accounting for low correlation: \texorpdfstring{$\norme \to \infty$}{Lg}}

In this subsection, we consider a fixed design set of $n$ coordinate-distinct points $\bs{x}^{(k)}$ ($k \in [\![1,n]\!]$) in $\R^r$. 

\begin{lem} \label{Lem:Matern_respecte_hyp}

For any Matérn anisotropic geometric or tensorized correlation kernel with smoothness $\nu>0$, 
 for all $b<2\min(1,\nu)-1$ 
 and $c>1$ (and if $\nu \neq 1$, for all $b \leqslant 2\min(1,\nu)-1$),
 \begin{enumerate}[(a)]
\item $ \forall \bs{ \mu } \in (\R_+)^r $, $ \| \deIrivee \| \leqslant M_{i,1} \; \mu_i^{-c}$.
\item $ \forall \bs{ \mu } \in (\R_+)^r $, $ \| \deIrivee \| \leqslant M_{i,2} \; \mu_i^b $.
\end{enumerate}
\end{lem}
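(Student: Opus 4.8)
The plan is to reduce the Frobenius norm of $\deIrivee$ to a scalar estimate on a single entry, then to decouple that estimate from every coordinate of $\bs{\mu}$ except $\mu_i$, and finally to analyse the resulting one-variable function through its behaviour at $0^+$ and at $+\infty$. First I would write
\[
\| \deIrivee \|^2 = \sum_{k,l=1}^n \left( \frac{\partial}{\partial \mu_i} [\coIrr]_{k,l} \right)^2 .
\]
The diagonal entries of $\coIrr$ are identically $1$, so their derivatives vanish, and for $k \neq l$ the $(k,l)$-th entry is the correlation kernel evaluated at $\bs{t} := \bs{x}^{(k)} - \bs{x}^{(l)}$, whose every component is nonzero because the design set is coordinate-distinct; in particular $t_i \neq 0$. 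It therefore suffices to bound each $\left| \frac{\partial}{\partial \mu_i} [\coIrr]_{k,l} \right|$ by $C_{k,l}\, \mu_i^{b}$ (resp. $C_{k,l}\, \mu_i^{-c}$) with a constant $C_{k,l}$ independent of $\bs{\mu}$, since $\| \deIrivee \|$ is the square root of the sum of the squares of its entries, so summing the $\leqslant n^2$ entrywise bounds yields $M_{i,1}$ and $M_{i,2}$.

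The crucial step is the decoupling. For the tensorized kernel I would bound the product $\prod_{j \neq i} K_{1,\nu}(|t_j|\mu_j)$ appearing in \eqref{Eq:deriv_tens_3cas} by $1$, each factor being a correlation hence at most $1$. For the geometric kernel, whose derivative \eqref{Eq:deriv_anis_geom_3cas} depends on all coordinates only through $\| \bs{t}\bs{\mu} \|$, I would use the elementary inequality $\| \bs{t}\bs{\mu} \| \geqslant |t_i|\mu_i$ together with the fact that $K_{1,s}$ is nonincreasing (consistent with the derivative formula $\frac{d}{d\rho}[\rho^s \mathcal{K}_s(\rho)] = -\rho^s \mathcal{K}_{s-1}(\rho)<0$ of formula 9.6.28 already invoked) to replace $K_{1,s}(\| \bs{t}\bs{\mu} \|)$ by $K_{1,s}(|t_i|\mu_i)$; in the subcase $\nu<1$ the extra factor $\| \bs{t}\bs{\mu} \|^{-2(1-\nu)}$ is likewise bounded by $(|t_i|\mu_i)^{-2(1-\nu)}$, the exponent being negative. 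In every case this leaves an upper bound of the form $C_{k,l}\, h_\nu(\mu_i)$ depending on $\mu_i$ alone, where $h_\nu$ is, up to a positive constant, $\mu_i\, K_{1,\nu-1}(|t_i|\mu_i)$ if $\nu>1$, $\mu_i\, \mathcal{K}_0(2|t_i|\mu_i)$ if $\nu=1$, and $\mu_i^{2\nu-1}\, K_{1,1-\nu}(|t_i|\mu_i)$ if $\nu<1$.

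It then remains to study $h_\nu$ on $(0,+\infty)$. As $\mu_i \to 0^+$ one has $K_{1,s}(|t_i|\mu_i)\to 1$ and $\mathcal{K}_0(2|t_i|\mu_i)\sim -\log\mu_i$, so $h_\nu(\mu_i)$ is equivalent to a constant times $\mu_i^{2\min(1,\nu)-1}$, with an additional $\log(1/\mu_i)$ factor precisely when $\nu=1$. As $\mu_i \to +\infty$ the Bessel asymptotics make $h_\nu(\mu_i)$ decay exponentially, this being where $t_i \neq 0$ is essential. I would then observe that $\mu_i^{-b}\, h_\nu(\mu_i)$ is continuous on $(0,+\infty)$, tends to $0$ at $+\infty$, and admits a finite limit at $0^+$ as soon as $b \leqslant 2\min(1,\nu)-1$ when $\nu \neq 1$, resp. $b<1$ when $\nu=1$ (the logarithmic factor ruling out $b=1$); being continuous with finite limits at both endpoints, it is bounded, which is statement (b). Likewise $\mu_i^{c}\, h_\nu(\mu_i)$ is continuous, vanishes at $+\infty$ by the same exponential decay, and, since $c>1>1-2\min(1,\nu)$, also vanishes at $0^+$, so it is bounded and statement (a) follows.

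The main obstacle is the decoupling in the geometric case: because $\| \bs{t}\bs{\mu} \|$ mixes all the correlation lengths, the estimates must hold uniformly over $(\mu_j)_{j\neq i}$, and this uniformity is exactly what the monotonicity inequality $\| \bs{t}\bs{\mu} \| \geqslant |t_i|\mu_i$ (and, for $\nu<1$, the control of the negative power $\| \bs{t}\bs{\mu} \|^{-2(1-\nu)}$) is designed to secure. The secondary delicate point is matching the endpoint exponent exactly: the logarithmic singularity of $\mathcal{K}_0$ at the origin is what forces the case $\nu=1$ to admit only the strict bound $b<1$, in contrast with the closed bound $b \leqslant 2\min(1,\nu)-1$ available when $\nu \neq 1$.
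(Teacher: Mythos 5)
Your proof is correct and follows the same route as the paper's, which simply cites Lemmas \ref{Lem:derivee_matern_tens} and \ref{Lem:derivee_matern_anisgeom} together with the boundedness of Matérn kernels, the exponential decay of $\mathcal{K}_\nu$ at infinity and the logarithmic behaviour of $\mathcal{K}_0$ at the origin. You supply the details the paper leaves implicit --- notably the uniform decoupling from $\bs{\mu}_{-i}$ via $\|\bs{t}\bs{\mu}\| \geqslant |t_i|\mu_i$ and the monotonicity of the one-dimensional kernel in the geometric case --- and your endpoint analysis (including the strict bound $b<1$ forced by the logarithmic singularity when $\nu=1$) matches the statement exactly.
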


\begin{proof}
This can be gathered from Lemma \ref{Lem:derivee_matern_tens} or \ref{Lem:derivee_matern_anisgeom} after recalling that 1) a Matérn kernel is a bounded function, 2) $\forall \nu \geqslant 0$, as $z \to +\infty$, $\mathcal{K}_\nu(z) \sim \sqrt{\pi} \exp (-z) / \sqrt{2z} $ (\cite{AS64} 9.7.2) and 3) as $z \to 0$, $\mathcal{K}_0(z) \sim - \log(z)$ (\cite{AS64} 9.6.8).
\end{proof}

Let us define 
\begin{align}
f_i( \mu_i \; | \; \manqueI ) &:=  \sqrt{[\bs{\mathcal{I}}( \bs{ \mu } )]_{ii} } \, ;\\
\pi_i( \mu_i \; | \; \manqueI )  &:= f_i( \mu_i \; | \; \manqueI ) / \int_0^\infty f_i( \mu_i=t \; | \; \manqueI ) dt.
\end{align}

\begin{prop} \label{Prop:densite_continue}
For any Matérn anisotropic geometric or tensorized correlation kernel with smoothness $\nu>0$, 
for all $\mu_i \in (0,+\infty)$, $\pi(\mu_i | \manqueI)$, 
seen as a function of $\bs{\mu}$, is well defined and continuous over $\{ \bs{\mu} \in [0,+\infty)^r : \mu_i \neq 0 , \, \manqueI \neq \bs{0}_{r-1} \}$.
\end{prop}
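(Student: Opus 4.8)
The plan is to treat separately the unnormalised numerator $f_i$ and the normalising constant $Z(\manqueI) := \int_0^\infty f_i(t \,|\, \manqueI)\,dt$, and then to deduce continuity of the quotient $\pi_i = f_i/Z$. Writing $A_i := \deIrivee\,\coIrr^{-1}$, the key observation is that $A_i$ is similar to the symmetric matrix $\coIrr^{-1/2}\,\deIrivee\,\coIrr^{-1/2}$, so it has real eigenvalues $\lambda_1,\dots,\lambda_n$ and
\[
[\bs{\mathcal{I}}(\bs{\mu})]_{ii} = \Tr(A_i^2) - \tfrac1n (\Tr A_i)^2 = \sum_{k=1}^n (\lambda_k - \bar{\lambda})^2 \geqslant 0, \qquad \bar\lambda := \tfrac1n \sum_{l=1}^n \lambda_l,
\]
so that $f_i = \sqrt{[\bs{\mathcal{I}}(\bs{\mu})]_{ii}}$ is well defined. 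Continuity of $f_i$ as a function of $\bs{\mu}$ on the domain then follows once $\coIrr$ is known to be invertible there, since $\coIrr$ and $\deIrivee$ depend continuously on $\bs{\mu}$ (the latter through the explicit formulas of Lemmas \ref{Lem:derivee_matern_tens} and \ref{Lem:derivee_matern_anisgeom}), and matrix inversion, products, traces and the square root are all continuous.

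Next I would establish invertibility. On the domain one has $\mu_i>0$ and $\manqueI \neq \bs{0}_{r-1}$, so some coordinate $\mu_{j_0}$ is nonzero; since the design set is coordinate-distinct, the scaled points $\bs{x}^{(k)}\bs{\mu}$ are then pairwise distinct, and the strict positive-definiteness of the Matérn kernel makes $\coIrr$ positive definite, hence invertible. This gives continuity of $\coIrr^{-1}$, and therefore of $f_i$, on the whole domain. The same reasoning shows that $\coIrr$ stays positive definite on every set $\{(t,\manqueI) : t\in[0,\infty),\ \manqueI\in\mathcal{C}\}$ with $\mathcal{C}$ a compact subset of $\{\manqueI \neq \bs{0}_{r-1}\}$.

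For finiteness of $Z$ I would bound $f_i \leqslant \sqrt{\Tr(A_i^2)} \leqslant C\,\|\coIrr^{-1}\|\,\|\deIrivee\|$ and invoke Lemma \ref{Lem:Matern_respecte_hyp}: choosing $c>1$ and $b \in (-1,\,2\min(1,\nu)-1)$ — an interval which is nonempty precisely because $\nu>0$ — yields $\|\deIrivee\| \leqslant M_{i,1}\,t^{-c}$ and $\|\deIrivee\| \leqslant M_{i,2}\,t^{b}$. Since $\coIrr \to I$ as $t\to\infty$ (uniformly over $\mathcal{C}$) and its smallest eigenvalue is bounded below on any $[0,T]\times\mathcal{C}$ by compactness, the factor $\|\coIrr^{-1}\|$ is bounded uniformly for $t\in[0,\infty)$ and $\manqueI\in\mathcal{C}$. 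Hence $f_i(t\,|\,\manqueI) \leqslant g(t)$, where $g(t)=C' t^{b}$ near $0$ and $C' t^{-c}$ near $\infty$, which is integrable because $b>-1$ and $c>1$; this gives $Z(\manqueI)<\infty$. Positivity follows by noting that $f_i(t)=0$ forces $A_i$ to be scalar, i.e.\ $\deIrivee = \beta\,\coIrr$; reading this on the $\mu_i$-independent unit diagonal of $\coIrr$ forces $\beta=0$, so $\deIrivee=\bs{0}$, contradicting the strict sign of the off-diagonal entries of $\deIrivee$ for $t>0$ (again using coordinate-distinctness, with $n\geqslant 2$). Thus $f_i(t\,|\,\manqueI)>0$ for every $t>0$ and $Z(\manqueI)>0$.

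Finally, continuity of $\pi_i = f_i/Z$ reduces to continuity of $Z$ on $\{\manqueI \neq \bs{0}_{r-1}\}$, since $f_i$ is continuous and $Z>0$. The dominating function $g$ above can be taken independent of $\manqueI$ on each compact $\mathcal{C}$, so the dominated convergence theorem gives continuity of $\manqueI \mapsto Z(\manqueI)$, and hence of $\pi_i$ as a function of $\bs{\mu}$. I expect the main obstacle to be exactly this last uniform control of $\|\coIrr^{-1}\|$ over $[0,\infty)\times\mathcal{C}$: it requires combining the compactness argument on bounded $t$ with the convergence $\coIrr\to I$ as $t\to\infty$, so that the spectrum of $\coIrr$ stays bounded away from $0$ along the entire half-line and the bounds of Lemma \ref{Lem:Matern_respecte_hyp} can furnish a single integrable envelope.
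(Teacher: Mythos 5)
Your proof is correct and follows essentially the same route as the paper's own (very terse) proof: uniform boundedness of $\| \coIrr^{-1} \|$ for $\bs{\mu}$ near the point of interest, the integrable envelope furnished by Lemma \ref{Lem:Matern_respecte_hyp} with $c>1$ and $b>-1$, and the dominated convergence theorem for continuity of the normalising integral. The extra details you supply (nonnegativity of the quantity under the square root via the eigenvalues of $\deIrivee \coIrr^{-1}$, positivity of the normalising constant, and the spectral lower bound on $\coIrr$ over the whole $\mu_i$ half-line combining compactness with $\coIrr \to \bs{I}_n$) are points the paper leaves implicit, not a different argument.
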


\begin{proof}
For any given $\tilde{\bs{\mu}} \in [0,+\infty)^r$ such that $\tilde{\mu}_i \neq 0$ and $\tilde{\bs{\mu}}_{-i} \neq \bs{0}_{r-1}$, we prove that $\pi(\mu_i | \manqueI)$, seen as a function of $\bs{\mu}$, is well defined and continuous at $\bs{\mu} = \tilde{\bs{\mu}}$.

For a start, notice that if $\bs{\mu}$ is confined to a sufficiently small neighborhood of $\tilde{\bs{\mu}}$, then $\| \coIrr^{-1} \|$ remains bounded.
Therefore, Lemma \ref{Lem:Matern_respecte_hyp} implies that $  \int_0^\infty f_i( \mu_i=t \; | \; \manqueI ) dt$ is finite and, thanks to the dominated convergence theorem, that it is continuous at $\bs{\mu}_{-i} = \tilde{\bs{\mu}}_{-i}$.

\end{proof}

\begin{defn}
An anisotropic geometric or tensorized correlation kernel is said to be ``well-behaved'' if its one-dimensional version is, for any set of parameters, a positive decreasing function on $[0,+\infty)$ that vanishes in the neighborhood of $+\infty$.
\end{defn}

\begin{lem} \label{Lem:bien_eleve}
Provided a coordinate-distinct design set is used, a well-behaved anisotropic geometric or tensorized correlation kernel parametrized by $\bs{\mu}$ has the following properties:
\begin{enumerate}[(a)]
\item for any fixed $\manqueI \in \left[0,+\infty\right)^{r-1}$, it is a decreasing function of $\mu_i$ ;
\item as $\| \bs{\mu} \| \rightarrow \infty$, $\| \coIrr - \bs{I}_n \| \rightarrow 0$.
\end{enumerate}
\end{lem}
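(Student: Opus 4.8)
The plan is to reduce both assertions to the three defining properties of a well-behaved kernel---that its one-dimensional profile $K_{1,\nu}$ is positive, decreasing, and vanishes at $+\infty$ (with $K_{1,\nu}(0)=1$, since it is a correlation)---applied entrywise to $\coIrr$. First I would fix notation: writing the design points as $\bs{x}^{(1)},\dots,\bs{x}^{(n)}$ and setting $\bs{d}^{(kl)} = \bs{x}^{(k)} - \bs{x}^{(l)}$ for $k \neq l$, coordinate-distinctness says exactly that every component $d^{(kl)}_j$ is nonzero, so that $t_{kl} := \min_{j \in [\![1,r]\!]} |d^{(kl)}_j| > 0$. The $(k,l)$ entry of $\coIrr$ is then $K_{r,\nu}(\bs{d}^{(kl)} \bs{\mu})$ in the geometric case and $\prod_{j=1}^r K_{1,\nu}(|d^{(kl)}_j| \mu_j)$ in the tensorized case, while each diagonal entry equals $1$; it therefore suffices to analyze these entries.

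For (a), I would fix $\manqueI$ and treat $\mu_i$ as the only variable. In the tensorized case only the $i$-th factor $K_{1,\nu}(|d^{(kl)}_i|\mu_i)$ depends on $\mu_i$; since $|d^{(kl)}_i|>0$ and $K_{1,\nu}$ is decreasing, this factor decreases with $\mu_i$, and the remaining factors are positive constants, so the entry decreases. In the geometric case the entry is $K_{1,\nu}(\|\bs{d}^{(kl)}\bs{\mu}\|)$, and since $\|\bs{d}^{(kl)}\bs{\mu}\| = (\sum_j (d^{(kl)}_j)^2 \mu_j^2)^{1/2}$ is increasing in $\mu_i$ (strictly, as $d^{(kl)}_i \neq 0$), composing with the decreasing $K_{1,\nu}$ again yields a function decreasing in $\mu_i$.

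For (b), the crucial point---and the step I expect to be the main obstacle---is that $\norme \to \infty$ forces only $\max_j \mu_j \to \infty$, not every coordinate of $\bs{\mu}$; coordinate-distinctness is precisely what lets this single diverging coordinate drive all off-diagonal correlations to zero. Concretely, I would pick an index $i^*$ attaining $\max_j \mu_j$ and bound, in the geometric case, $\|\bs{d}^{(kl)}\bs{\mu}\| \geq |d^{(kl)}_{i^*}|\mu_{i^*} \geq t_{kl}\max_j\mu_j$, and in the tensorized case bound the product above by its $i^*$-th factor (the others lying in $(0,1]$), whose argument is again at least $t_{kl}\max_j\mu_j$. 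Using that $K_{1,\nu}$ is decreasing and vanishes at infinity, and that $\max_j\mu_j \geq \norme/\sqrt{r} \to \infty$, both cases give $|[\coIrr]_{kl}| \leq K_{1,\nu}(t_{kl}\max_j\mu_j) \to 0$.

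Finally, since $\coIrr - \bs{I}_n$ has vanishing diagonal and only $n(n-1)$ off-diagonal entries, each controlled by the bound above, its Frobenius norm is dominated by $\sqrt{n(n-1)}\,\max_{k\neq l} K_{1,\nu}(t_{kl}\max_j\mu_j) \to 0$ as $\norme \to \infty$, which is assertion (b). Everything beyond the max-coordinate argument is a direct transcription of the monotonicity and decay of $K_{1,\nu}$, so the whole lemma follows once the reduction to pairwise displacements and the coordinate-distinctness bound $t_{kl}>0$ are in place.
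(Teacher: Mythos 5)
Your proof is correct. Note that the paper itself states Lemma \ref{Lem:bien_eleve} without any proof, so there is no argument of the author's to compare against; what you have written is a complete and valid verification of a claim the paper treats as evident. You correctly identify the one point where something could actually go wrong, namely that $\|\bs{\mu}\|\to\infty$ only forces $\max_j \mu_j \to \infty$ rather than every coordinate, and that coordinate-distinctness (every $|x_i^{(k)}-x_i^{(l)}|>0$) is exactly the hypothesis that lets the single diverging coordinate kill all off-diagonal entries; this is also consistent with how the lemma is used later (e.g.\ in the proof of Lemma \ref{Lem:densite_minoree}, where monotonicity in $\mu_i$ of each off-diagonal entry is what permits the telescoping bound $\int_0^{+\infty} |\partial_{\mu_i} \coIrr^{(kl)}|\,dt = \coIrrI^{(kl)}$). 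Two cosmetic remarks: ``decreasing'' in (a) must be read entrywise on the off-diagonal entries (the diagonal is constant equal to $1$), which is the reading your argument supplies and the one the paper relies on; and in (b) you could streamline the final bound by setting $t:=\min_{k\neq l} t_{kl}>0$, so that $\|\coIrr-\bs{I}_n\| \leqslant \sqrt{n(n-1)}\,K_{1,\nu}\bigl(t\,\|\bs{\mu}\|/\sqrt{r}\bigr) \to 0$.
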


\begin{lem} \label{Lem:dens_asymptotique}
For any well-behaved correlation kernel, as $\| \bs{\mu} \| \rightarrow \infty$, $\Tr \left[ \deIrivee \coIrr^{-1} \right] = o \left( \left\| \deIrivee \right\|  \right)$.
\end{lem}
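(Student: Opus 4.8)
The plan is to exploit the fact that, as $\norme \to \infty$, the correlation matrix $\coIrr$ collapses to the identity, so that the trace $\Tr[\deIrivee \, \coIrr^{-1}]$ survives only through products of \emph{off-diagonal} entries, which become asymptotically negligible compared to $\| \deIrivee \|$.

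First I would record two structural facts. Since $\coIrr$ is a correlation matrix, its diagonal entries are identically $1$ (each is the correlation of an observation with itself, namely $K_{r,\nu}(\bs{0})=1$); consequently every diagonal entry of $\deIrivee$ vanishes. Second, Lemma \ref{Lem:bien_eleve}(b) gives $\| \coIrr - \bs{I}_n \| \to 0$ as $\norme \to \infty$, and since matrix inversion is continuous in a neighborhood of $\bs{I}_n$, this yields $\| \coIrr^{-1} - \bs{I}_n \| \to 0$ as well. In particular the Frobenius norm of the off-diagonal part of $\coIrr^{-1}$ tends to $0$.

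Next I would expand the trace entrywise. Writing $\bs{A} = \deIrivee$ and $\bs{B} = \coIrr^{-1}$, we have $\Tr[\bs{A}\bs{B}] = \sum_{k,l} A_{kl} B_{lk}$, and because $A_{kk}=0$ the diagonal terms drop out, leaving $\Tr[\bs{A}\bs{B}] = \sum_{k \neq l} A_{kl} B_{lk}$. Applying the Cauchy--Schwarz inequality to this sum over off-diagonal index pairs gives
\begin{equation}
\left| \Tr[\bs{A}\bs{B}] \right| \leqslant \left( \sum_{k \neq l} A_{kl}^2 \right)^{1/2} \left( \sum_{k \neq l} B_{lk}^2 \right)^{1/2} \leqslant \| \bs{A} \| \cdot \| \bs{B} - \bs{I}_n \|,
\end{equation}
where the first factor equals $\| \bs{A} \| = \| \deIrivee \|$ precisely because $\bs{A}$ has zero diagonal, and the off-diagonal Frobenius norm of $\bs{B}$ is bounded by $\| \bs{B} - \bs{I}_n \|$ (the off-diagonal entries of $\bs{B}$ and of $\bs{B}-\bs{I}_n$ coincide).

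Since $\| \bs{B} - \bs{I}_n \| = \| \coIrr^{-1} - \bs{I}_n \| \to 0$ by the second structural fact, dividing through by $\| \deIrivee \|$ (which stays positive for large $\norme$, as at least one off-diagonal entry is non-zero by Lemma \ref{Lem:derivee_matern_tens} or \ref{Lem:derivee_matern_anisgeom} on a coordinate-distinct design) gives $|\Tr[\bs{A}\bs{B}]| / \| \deIrivee \| \to 0$, which is exactly the claim $\Tr[\deIrivee \, \coIrr^{-1}] = o(\| \deIrivee \|)$. The genuine content of the argument is the zero-diagonal observation; the only mildly delicate technical point is the passage from $\coIrr \to \bs{I}_n$ to $\coIrr^{-1} \to \bs{I}_n$, which is harmless here because the limiting matrix is the identity, whose determinant is bounded away from $0$, so the inverse remains well-defined and continuous throughout the relevant regime.
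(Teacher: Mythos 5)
Your proof is correct and follows the same route as the paper, whose argument is exactly the one-line observation that the diagonal of $\deIrivee$ vanishes while $\coIrr$ tends to the identity; you have merely made the estimate explicit via Cauchy--Schwarz on the off-diagonal entries. No issues.
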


\begin{proof}
This result is due to the fact that all $\deIrivee$'s diagonal coefficients are null and $\coIrr$ goes to the identity matrix as $\| \bs{\mu} \| \rightarrow \infty$.
\end{proof}

Let us now define

\begin{equation}
h_i( \mu_i \; | \; \manqueI ) :=  \sqrt{\Tr \left[ \left( \deIrivee \right)^2 \right] } = \left\| \deIrivee \right\|.
\end{equation}

\begin{lem} \label{Lem:dens_equivalente}
For any well-behaved correlation kernel, as $\| \bs{\mu} \| \rightarrow \infty$, $ f_i( \mu_i \; | \; \manqueI ) \sim h_i( \mu_i \; | \; \manqueI )$.
\end{lem}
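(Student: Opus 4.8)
The plan is to reduce the claim to the squared quantities and show $f_i^2 \sim h_i^2$, which suffices since both $f_i$ and $h_i$ are nonnegative. Starting from the $(i,i)$-entry of the Fisher information matrix \eqref{Eq:Fisher_info} written in the $\bs{\mu}$-parametrization, I would first record that
\begin{equation}
f_i(\mu_i \mid \manqueI)^2 = \Tr\left[ \left( \deIrivee \coIrr^{-1} \right)^2 \right] - \frac{1}{n} \Tr\left[ \deIrivee \coIrr^{-1} \right]^2,
\end{equation}
whereas $h_i(\mu_i \mid \manqueI)^2 = \Tr[(\deIrivee)^2] = \| \deIrivee \|^2$, the last equality holding because $\deIrivee$ is symmetric. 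The argument then splits in two: show the first trace equals $h_i^2(1+o(1))$ and the subtracted term is $o(h_i^2)$.

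The subtracted term is immediate: Lemma \ref{Lem:dens_asymptotique} gives $\Tr[\deIrivee \coIrr^{-1}] = o(\|\deIrivee\|) = o(h_i)$, hence $\tfrac{1}{n}\Tr[\deIrivee\coIrr^{-1}]^2 = o(h_i^2)$. For the leading trace I would set $A := \deIrivee$ and $B := \coIrr^{-1} - \bs{I}_n$, so that $A\coIrr^{-1} = A + AB$. Lemma \ref{Lem:bien_eleve}(b) states that $\coIrr \to \bs{I}_n$ as $\norme \to \infty$; since inversion is continuous at $\bs{I}_n$ (a Neumann-series argument, valid once $\|\coIrr - \bs{I}_n\| < 1$, makes this quantitative), it follows that $\|B\| \to 0$. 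Expanding and using cyclicity of the trace,
\begin{equation}
\Tr\left[ \left( A\coIrr^{-1} \right)^2 \right] = \Tr[A^2] + 2\,\Tr[A^2 B] + \Tr[(AB)^2].
\end{equation}
I would then bound the two correction terms using the Cauchy-Schwarz inequality $|\Tr[MN]| \leqslant \|M\|\,\|N\|$ for the Frobenius inner product together with submultiplicativity of the Frobenius norm, obtaining $|\Tr[A^2 B]| \leqslant \|A\|^2 \|B\| = h_i^2 \|B\|$ and $|\Tr[(AB)^2]| \leqslant \|AB\|^2 \leqslant h_i^2 \|B\|^2$. Since $\|B\| \to 0$, both corrections are $o(h_i^2)$, so $\Tr[(A\coIrr^{-1})^2] = h_i^2(1 + o(1))$.

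Combining the two estimates yields $f_i^2 = h_i^2(1+o(1))$. On a coordinate-distinct design set $\deIrivee$ has nonzero off-diagonal entries (by the derivative formulas of Lemma \ref{Lem:derivee_matern_tens} and Lemma \ref{Lem:derivee_matern_anisgeom}), so $h_i = \|\deIrivee\| > 0$, and I may divide and take square roots to conclude $f_i/h_i \to 1$, that is $f_i \sim h_i$. The one point demanding care is the control of the cross terms: each error is the product of the \emph{common} factor $h_i^2$ with the vanishing norm $\|B\|$, so the errors are negligible \emph{relative} to $h_i^2$ even though $h_i$ itself tends to $0$ — this is exactly what legitimizes replacing $\coIrr^{-1}$ by $\bs{I}_n$ inside the trace, and is the only subtle step in the proof.
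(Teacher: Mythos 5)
Your proof is correct and follows the same route as the paper: the paper's own proof is a one-line appeal to $\corr \to \bs{I}_n$ together with Lemma \ref{Lem:dens_asymptotique}, and your argument simply makes that reduction explicit (splitting off the $\frac{1}{n}\Tr[\cdot]^2$ term via Lemma \ref{Lem:dens_asymptotique} and controlling $\Tr[(\deIrivee \corr^{-1})^2] - \Tr[(\deIrivee)^2]$ by Cauchy--Schwarz and submultiplicativity with $\bs{B} = \corr^{-1} - \bs{I}_n \to \bs{0}$). The care you take to show the error terms are $o(h_i^2)$ \emph{relative} to the common factor $h_i^2$ is exactly the detail the paper leaves implicit.
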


\begin{proof}
Because $\coIrr$ goes to the identity matrix, this is a direct consequence of Lemma \ref{Lem:dens_asymptotique}.
\end{proof}

\begin{cor} \label{Cor:dens_encadrement}
For any well-behaved correlation kernel, there exist $S>0$, and $0<a<b$ such that, whenever $\| \bs{\mu} \| \geqslant S$, 
\begin{equation}
 a \; h_i( \mu_i \; | \; \manqueI ) \leqslant
f_i( \mu_i \; | \; \manqueI ) \leqslant
b \; h_i( \mu_i \; | \; \manqueI ).
\end{equation}
\end{cor}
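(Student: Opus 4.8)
The plan is to read the desired two-sided bound directly off the asymptotic equivalence already established in Lemma \ref{Lem:dens_equivalente}. That lemma asserts $f_i( \mu_i \; | \; \manqueI ) \sim h_i( \mu_i \; | \; \manqueI )$ as $\norme \to \infty$, which by the definition of asymptotic equivalence means that the ratio $f_i( \mu_i \; | \; \manqueI ) / h_i( \mu_i \; | \; \manqueI )$ tends to $1$ as $\norme \to \infty$. First I would check that this quotient is well defined on the region of interest: for a coordinate-distinct design set and a well-behaved kernel, the explicit derivative formulas of Lemmas \ref{Lem:derivee_matern_tens} and \ref{Lem:derivee_matern_anisgeom} show that $\deIrivee$ does not vanish for $\bs{\mu} \in (0,+\infty)^r$, so $h_i( \mu_i \; | \; \manqueI ) = \| \deIrivee \| > 0$ there and the ratio makes sense.

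Having secured positivity of $h_i$, the argument is then a mechanical application of the definition of the limit. I would fix any $\epsilon \in (0,1)$, for instance $\epsilon = 1/2$. By Lemma \ref{Lem:dens_equivalente} there exists $S>0$ such that, whenever $\norme \geqslant S$,
\[
\left| \frac{ f_i( \mu_i \; | \; \manqueI ) }{ h_i( \mu_i \; | \; \manqueI ) } - 1 \right| < \epsilon .
\]
Multiplying through by $h_i( \mu_i \; | \; \manqueI ) > 0$ yields
\[
(1-\epsilon)\, h_i( \mu_i \; | \; \manqueI ) \leqslant f_i( \mu_i \; | \; \manqueI ) \leqslant (1+\epsilon)\, h_i( \mu_i \; | \; \manqueI ),
\]
so the claim holds with $a = 1-\epsilon$ and $b = 1+\epsilon$, which indeed satisfy $0 < a < b$.

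The one point deserving attention—and the main, though modest, obstacle—is the uniformity of the equivalence. The corollary requires a single threshold $S$ valid for all admissible pairs $(\mu_i, \manqueI)$ with $\norme \geqslant S$, not merely along each separate path to infinity. This uniformity is precisely what the phrasing ``as $\norme \to \infty$'' encodes in Lemma \ref{Lem:dens_equivalente} and in its supporting results, Lemmas \ref{Lem:dens_asymptotique} and \ref{Lem:bien_eleve}, where the convergence $\coIrr \to \bs{I}_n$ and the estimate $\Tr \left[ \deIrivee \coIrr^{-1} \right] = o(\| \deIrivee \|)$ are controlled solely through the norm $\norme$ of the full argument. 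Consequently no work beyond invoking Lemma \ref{Lem:dens_equivalente} is needed, and the corollary is essentially a restatement of that equivalence in the form of explicit multiplicative bounds.
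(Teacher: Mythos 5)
Your proposal is correct and matches the paper's intent: the corollary is stated without proof precisely because it is the immediate unpacking of the asymptotic equivalence $f_i \sim h_i$ from Lemma \ref{Lem:dens_equivalente}, exactly as you do with $\epsilon = 1/2$. Your added remarks on the positivity of $h_i$ and on the uniformity of the threshold $S$ in $\norme$ are sensible sanity checks but introduce nothing beyond what the lemma already provides.
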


In the following, $\coIrrI$ is the correlation matrix that would be obtained if $\mu_i$ were replaced by 0. Moreover, if $\bs{M}$ is a matrix, $\bs{M}^{(kl)}$ is its element in the $k$-th row and $l$-th column.

\begin{lem} \label{Lem:densite_minoree}
If a well-behaved correlation kernel is used, there exist real constants $S>0$ and $c>0$ such that, for all $\mu_i \in (0,+\infty)$ and whenever $\normeI \geqslant S$,
\begin{equation}
\pi_i(\mu_i | \manqueI) \geqslant c \frac{\| \deIrivee \|}{\sum_{k \neq l} \coIrrI^{(kl)}}.
\end{equation}

\end{lem}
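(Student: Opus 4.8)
The plan is to control $\pi_i(\mu_i \mid \manqueI)$ by bounding its numerator $f_i(\mu_i \mid \manqueI)$ from below and its denominator $\int_0^\infty f_i(t \mid \manqueI)\,dt$ from above, both via Corollary \ref{Cor:dens_encadrement}. The crucial preliminary observation is that the hypothesis bears on $\normeI$, not on $\norme$: since $\norme^2 = \mu_i^2 + \normeI^2 \geq \normeI^2$, the condition $\normeI \geq S$ forces $\norme \geq S$ for \emph{every} value of $\mu_i \in (0,+\infty)$. Hence Corollary \ref{Cor:dens_encadrement} applies simultaneously at the given $\mu_i$ and throughout the entire range of integration in the denominator.

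For the numerator, Corollary \ref{Cor:dens_encadrement} gives $f_i(\mu_i \mid \manqueI) \geq a\, h_i(\mu_i \mid \manqueI) = a\, \| \deIrivee \|$. For the denominator, the same corollary gives $\int_0^\infty f_i(t \mid \manqueI)\,dt \leq b \int_0^\infty \| \deIrivee \|\,d\mu_i$, the integration variable now being written $\mu_i$. It therefore suffices to show that this last integral is at most $\sum_{k \neq l} \coIrrI^{(kl)}$; combining the two bounds then yields the claim with $c = a/b$.

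To bound $\int_0^\infty \| \deIrivee \|\,d\mu_i$, I would pass from the Frobenius norm to the entrywise sum, $\| \deIrivee \| \leq \sum_{k,l} \bigl| (\deIrivee)^{(kl)} \bigr|$, and integrate term by term. The diagonal terms vanish because $\coIrr^{(kk)} = 1$ is constant in $\mu_i$. For $k \neq l$, Lemma \ref{Lem:bien_eleve}(a) ensures that $\coIrr^{(kl)}$ is a decreasing function of $\mu_i$, so its derivative has constant sign and $\int_0^\infty \bigl| (\deIrivee)^{(kl)} \bigr|\,d\mu_i$ equals the total variation $\coIrr^{(kl)}\big|_{\mu_i = 0} - \lim_{\mu_i \to \infty} \coIrr^{(kl)}$. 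By coordinate-distinctness the $i$-th coordinates of $\bs{x}^{(k)}$ and $\bs{x}^{(l)}$ differ, so this correlation vanishes as $\mu_i \to \infty$, while at $\mu_i = 0$ it equals $\coIrrI^{(kl)}$ by definition of $\coIrrI$. Summing over $k \neq l$ produces exactly $\sum_{k \neq l} \coIrrI^{(kl)}$, as required.

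The routine part is the term-by-term integration and the identification of the endpoint values; the delicate point is the initial remark that $\normeI \geq S$ propagates to $\norme \geq S$ uniformly in $\mu_i$, which is what legitimizes applying the asymptotic sandwich of Corollary \ref{Cor:dens_encadrement} under the integral sign over the whole half-line. One should also check that $\coIrr^{(kl)}$ is absolutely continuous in $\mu_i$ so that the total-variation identity holds; this follows from the explicit derivatives of Lemmas \ref{Lem:derivee_matern_tens} and \ref{Lem:derivee_matern_anisgeom}, with monotonicity absorbing any mild singularity of the derivative at $\mu_i = 0$ in the case $\nu < 1$.
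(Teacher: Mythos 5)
Your proposal is correct and follows essentially the same route as the paper: lower-bound the numerator by $a\,h_i = a\,\| \deIrivee \|$ and upper-bound the denominator by $b$ times the entrywise total variation of $\coIrr$ in $\mu_i$, which telescopes to $\sum_{k\neq l}\coIrrI^{(kl)}$ by monotonicity and the endpoint values at $0$ and $+\infty$. Your explicit remark that $\normeI \geqslant S$ forces $\norme \geqslant S$ uniformly over the integration variable is exactly the (tacit) justification for applying Corollary \ref{Cor:dens_encadrement} under the integral in the paper's own argument.
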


\begin{proof}
If a well-behaved correlation kernel is used, then for any for any $\epsilon>0$, Corollary \ref{Cor:dens_encadrement} implies that

\begin{equation}
\int_0^{+\infty} f_i(\mu_i = t | \manqueI) dt \leqslant  
 b\int_0^{+\infty} h_i(\mu_i = t | \manqueI) dt \leqslant
- b \sum_{k \neq l} \int_0^{+\infty} \deIrivee^{(kl)} dt.
\end{equation}

The last inequality holds because the Frobenius norm of any matrix is smaller than or equal to the sum of the absolute values of its elements and the correlation kernel is a decreasing function of $\mu_i$.
Now, for all $k \neq l$, when $\mu_i \rightarrow +\infty$, $\coIrr^{(kl)} \rightarrow 0$ and when $\mu_i=0$, $\coIrr^{(kl)} = \coIrrI^{(kl)}$. From this, we gather that

\begin{equation}
\int_0^{+\infty} f_i(\mu_i = t | \manqueI) dt \leqslant
 b \sum_{k \neq l} \coIrrI^{(kl)}.
\end{equation}

From this, we deduce that

\begin{equation}
\pi_i(\mu_i | \manqueI) = \frac{f_i(\mu_i | \manqueI)}{\int_0^{+\infty} f_i(\mu_i = t | \manqueI) dt}
\geqslant \frac{a}{b} \frac{\| \deIrivee \|}{\sum_{k \neq l} \coIrrI^{(kl)}}.
\end{equation}
\end{proof}

This Lemma has the following immediate consequence:

\begin{prop} \label{Prop:grandmu_tens}
If a well-behaved tensorized kernel is used, there exists $S>0$ and for every $i \in [\![1,r]\!]$, there exists a function $M_i : (0,+\infty) \rightarrow (0,+\infty)$  such that for all $\normeI \geqslant S$, $\pi_i(\mu_i | \manqueI) \geqslant M_i(\mu_i)$.
\end{prop}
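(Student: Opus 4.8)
The plan is to substitute the tensorized form of the kernel into the lower bound supplied by Lemma \ref{Lem:densite_minoree} and to observe that, thanks to the product structure, the resulting bound no longer depends on $\manqueI$. Lemma \ref{Lem:densite_minoree} already furnishes constants $S>0$ and $c>0$ (which we may take uniform over the finitely many indices $i$) such that, whenever $\normeI \geqslant S$,
\[ \pi_i(\mu_i | \manqueI) \geqslant c\,\frac{\|\deIrivee\|}{\sum_{k \neq l} \coIrrI^{(kl)}}, \]
so everything reduces to bounding this single ratio from below by a strictly positive function of $\mu_i$ alone.

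First I would exploit the product structure. Writing $d_j^{(kl)} := |x_j^{(k)} - x_j^{(l)}|$, each entry factorizes as $\coIrr^{(kl)} = \prod_{j=1}^r K_{1,\nu}(d_j^{(kl)} \mu_j)$, so differentiation in $\mu_i$ affects only the $i$-th factor:
\[ \left(\deIrivee\right)^{(kl)} = \left[\frac{\partial}{\partial \mu_i} K_{1,\nu}(d_i^{(kl)} \mu_i)\right] \prod_{j \neq i} K_{1,\nu}(d_j^{(kl)} \mu_j). \]
The product over $j \neq i$ is exactly the entry $\coIrrI^{(kl)}$ obtained by setting $\mu_i = 0$. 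Thus every off-diagonal derivative entry equals $\big[\partial_{\mu_i} K_{1,\nu}(d_i^{(kl)}\mu_i)\big]\coIrrI^{(kl)}$, whose square is $g_{kl}(\mu_i)\,(\coIrrI^{(kl)})^2$ with $g_{kl}(\mu_i) := [\partial_{\mu_i} K_{1,\nu}(d_i^{(kl)} \mu_i)]^2$ depending on $\mu_i$ only (the diagonal entries vanish). The crucial observation is that the entire $\manqueI$-dependence of the numerator is carried by the same weights $\coIrrI^{(kl)}$ that make up the denominator.

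The one genuinely substantive step is that the numerator is a Frobenius ($\ell^2$) norm while the denominator is an $\ell^1$-type sum, so the common weights cannot be cancelled term by term. I would bridge this gap by Cauchy--Schwarz. With $N := n(n-1)$ off-diagonal pairs and $g_{\min}(\mu_i) := \min_{k \neq l} g_{kl}(\mu_i)$,
\[ \|\deIrivee\|^2 = \sum_{k \neq l} g_{kl}(\mu_i)\,\big(\coIrrI^{(kl)}\big)^2 \geqslant g_{\min}(\mu_i) \sum_{k \neq l} \big(\coIrrI^{(kl)}\big)^2 \geqslant \frac{g_{\min}(\mu_i)}{N}\Big(\sum_{k \neq l} \coIrrI^{(kl)}\Big)^2. \]
Dividing by $\big(\sum_{k \neq l}\coIrrI^{(kl)}\big)^2$ and taking square roots eliminates the $\manqueI$-dependence altogether, giving a lower bound of the form $N^{-1/2}\sqrt{g_{\min}(\mu_i)}$.

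Finally I would set $M_i(\mu_i) := (c/\sqrt{N})\sqrt{g_{\min}(\mu_i)}$ and check strict positivity on $(0,+\infty)$. This is where coordinate-distinctness enters: each $d_i^{(kl)}>0$, and Lemma \ref{Lem:derivee_matern_tens} shows the one-dimensional Matérn derivative is nonzero for every $\mu_i>0$ in each regime $\nu<1$, $\nu=1$, $\nu>1$; hence every $g_{kl}(\mu_i)>0$, and as a minimum of finitely many positive continuous functions $g_{\min}(\mu_i)>0$. Combining with Lemma \ref{Lem:densite_minoree} yields $\pi_i(\mu_i | \manqueI) \geqslant M_i(\mu_i)$ for all $\normeI \geqslant S$, which is the claim. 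The main obstacle is precisely the $\ell^2$-versus-$\ell^1$ mismatch handled above; every other step is bookkeeping made possible by the tensor factorization.
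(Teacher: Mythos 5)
Your proof is correct and follows essentially the same route as the paper's: exploit the tensor factorization to write each off-diagonal entry of $\deIrivee$ as a $\mu_i$-dependent factor times $\coIrrI^{(kl)}$, compare the Frobenius norm with the $\ell^1$-type sum via a finite-dimensional Cauchy--Schwarz bound, and conclude with Lemma \ref{Lem:densite_minoree}. The only cosmetic difference is that the paper first bounds $\|\deIrivee\|$ below by $n^{-1}\sum_{k\neq l}|\deIrivee^{(kl)}|$ and then extracts the minimum weight, whereas you extract the minimum first and then apply Cauchy--Schwarz; the resulting bound is the same up to a constant.
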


\begin{proof}
If a tensorized correlation kernel is used, for every pair of integers $(k,l) \in [\![1,r]\!]^2$ such that $k \neq l$, define the function $M_i^{(kl)} :(0,+\infty) \rightarrow (0,+\infty)$ ; $t \mapsto \left| \frac{d}{dt} \bs{\Sigma}_{\mu_i=t, \manqueI=\bs{0}_{r-1}}^{(kl)} \right|$. 
\begin{equation}
\left\| \deIrivee \right\| \geqslant \frac{1}{n} \sum_{k \neq l} \left| \deIrivee^{(kl)} \right| = \frac{1}{n} \sum_{k \neq l} M_i^{(kl)}(\mu_i) \coIrrI^{(kl)} \geqslant 
 \frac{1}{n} \min_{ k \neq l} M_i^{(kl)} (\mu_i)  \sum_{k \neq l}  \coIrrI^{(kl)}.
\end{equation}

This fact, joined with Lemma \ref{Lem:densite_minoree}, yields the result.
\end{proof}

\begin{prop} \label{Prop:grandmu_anis_geom}
Assume a well-behaved anisotropic geometric correlation kernel is used. If the corresponding one-dimensional kernel $K$ has the properties (P1) and (P2), then for every $i \in [\![1,r]\!]$, there exist positive functions $s_i$ and $m_i$ defined on $(0,+\infty)$ 
such that, for all $\normeI \geqslant s_i(\mu_i)$, $\pi_i(\mu_i | \manqueI) \geqslant m_i(\mu_i)$. \medskip

(P1) : There exist $S_1>0$ and $M_1>0$ such that, for all $t \geqslant S_1$, $|K'(t)| \geqslant M_1 t K(t)$. \medskip

(P2) : For any $a>0$, there exist $S_2(a)>0$ and $M_2(a)>0$ such that, whenever $t \geqslant S_2(a)$, $K(t+a) \geqslant M_2(a) K(t)$.
\end{prop}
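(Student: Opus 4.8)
The plan is to follow the same route as the proof of Proposition \ref{Prop:grandmu_tens}, starting from the lower bound furnished by Lemma \ref{Lem:densite_minoree}: there are constants $S>0$ and $c>0$ with $\pi_i(\mu_i \mid \manqueI) \geqslant c \, \|\deIrivee\| \big/ \sum_{k \neq l} \coIrrI^{(kl)}$ whenever $\normeI \geqslant S$. As in the tensorized case, it then suffices to bound each off-diagonal entry $|\deIrivee^{(kl)}|$ from below by a multiple of $\coIrrI^{(kl)}$ whose constant depends on $\mu_i$ alone; combined with $\|\deIrivee\| \geqslant \tfrac{1}{n} \sum_{k\neq l} |\deIrivee^{(kl)}|$, this cancels the denominator and leaves a bound depending only on $\mu_i$. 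The difficulty, and the reason properties (P1)--(P2) are needed, is that in the geometric anisotropic case the entries do not factor as they do for tensorized kernels. Writing $\bs{d}^{(kl)} = \bs{x}^{(k)} - \bs{x}^{(l)}$, $t_{kl} = \| \bs{d}^{(kl)} \bs{\mu}\|$ and $s_{kl} = \|\bs{d}^{(kl)}_{-i} \bs{\mu}_{-i}\|$, differentiation of $K(t_{kl})$ gives $\deIrivee^{(kl)} = (d_i^{(kl)})^2 \mu_i \, t_{kl}^{-1} K'(t_{kl})$, whereas $\coIrrI^{(kl)} = K(s_{kl})$, so the ratio $|\deIrivee^{(kl)}|/\coIrrI^{(kl)}$ genuinely depends on the whole vector $\bs{\mu}$ rather than on $\mu_i$ alone.

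The first key step is to apply (P1) at $t = t_{kl}$ to convert the derivative into the kernel itself: once $t_{kl} \geqslant S_1$, property (P1) gives $|K'(t_{kl})| \geqslant M_1 t_{kl} K(t_{kl})$, whence $|\deIrivee^{(kl)}| \geqslant M_1 (d_i^{(kl)})^2 \mu_i K(t_{kl})$. The second key step is to compare $K(t_{kl})$ with $K(s_{kl}) = \coIrrI^{(kl)}$. The geometric observation here is that $t_{kl} = \sqrt{s_{kl}^2 + (d_i^{(kl)} \mu_i)^2}$ exceeds $s_{kl}$ by at most $(d_i^{(kl)} \mu_i)^2 / (2 s_{kl})$. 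Coordinate-distinctness guarantees $\delta := \min_{k \neq l,\, j} |d_j^{(kl)}| > 0$, so on the one hand $s_{kl} \geqslant \delta \normeI$ and on the other $(d_i^{(kl)})^2 \geqslant \delta^2$. Hence, for fixed $\mu_i$, the increment $t_{kl} - s_{kl}$ can be forced below any prescribed $a>0$ by taking $\normeI$ large relative to $\mu_i$. Monotonicity of $K$ then gives $K(t_{kl}) \geqslant K(s_{kl} + a)$, and (P2) gives $K(s_{kl}+a) \geqslant M_2(a) K(s_{kl})$ as soon as $s_{kl} \geqslant S_2(a)$, i.e. again for $\normeI$ large enough.

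Taking $a = 1$ and defining the threshold $s_i(\mu_i)$ large enough that, for all pairs $k \neq l$ simultaneously, $t_{kl} \geqslant S_1$, $s_{kl} \geqslant S_2(1)$, $t_{kl} - s_{kl} \leqslant 1$ and $\normeI \geqslant S$ all hold (each of these reducing, via $s_{kl} \geqslant \delta\normeI$, to a lower bound on $\normeI$ that is at most quadratic in $\mu_i$), one obtains $|\deIrivee^{(kl)}| \geqslant M_1 M_2(1) \delta^2 \mu_i \, \coIrrI^{(kl)}$ uniformly over the off-diagonal entries. Summing, $\|\deIrivee\| \geqslant \tfrac{1}{n} \sum_{k \neq l}|\deIrivee^{(kl)}| \geqslant \tfrac{1}{n} M_1 M_2(1) \delta^2 \mu_i \sum_{k\neq l}\coIrrI^{(kl)}$, and Lemma \ref{Lem:densite_minoree} then yields $\pi_i(\mu_i \mid \manqueI) \geqslant m_i(\mu_i)$ with $m_i(\mu_i) = (c/n)\, M_1 M_2(1) \delta^2 \mu_i$, a positive function of $\mu_i$ as required. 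The main obstacle is exactly this middle step: unlike Proposition \ref{Prop:grandmu_tens}, where the threshold could be taken independent of $\mu_i$, the control of $t_{kl} - s_{kl}$ forces the threshold to depend on $\mu_i$, which is precisely why the statement provides a function $s_i(\mu_i)$ rather than a single constant $S$.
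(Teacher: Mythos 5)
Your proof is correct and follows essentially the same route as the paper's: both start from Lemma \ref{Lem:densite_minoree} and then bound each off-diagonal entry $\left|\deIrivee^{(kl)}\right|$ from below by a $\mu_i$-dependent multiple of $\coIrrI^{(kl)}$ using (P1) followed by (P2), before summing and invoking $\|\deIrivee\| \geqslant \frac{1}{n}\sum_{k \neq l} |\deIrivee^{(kl)}|$. The only difference is technical bookkeeping: the paper applies (P2) with increment $a_{kl} = |x_i^{(k)}-x_i^{(l)}|\mu_i$ (via the bound $\sqrt{t^2+a^2} \leqslant t+a$), so the $\mu_i$-dependence lands in the constant $M_2(a_{kl})$, whereas you fix $a=1$ via the sharper estimate $t_{kl}-s_{kl} \leqslant (d_i^{(kl)}\mu_i)^2/(2 s_{kl})$ and push the dependence into the threshold $s_i(\mu_i)$ — both are legitimate since the statement lets both $s_i$ and $m_i$ depend on $\mu_i$.
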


\begin{proof}

From (P1), we gather that for all $a>0$ and $t \geqslant S_1$, $|K'(\sqrt{t^2 + a^2})| \geqslant M_1 \sqrt{t^2 + a^2} K(\sqrt{t^2+a^2})$. Now, because the correlation kernel is well-behaved, $K$ is a decreasing function. As $\sqrt{t^2 + a^2} \leqslant t+a$, $K(\sqrt{t^2+a^2}) \geqslant K(t+a)$. 

Plugging this into the previous inequality, we get $|K'(\sqrt{t^2 + a^2})| \geqslant M_1 \sqrt{t^2 + a^2} K(t+a)$.

If $t \geqslant \max(S_1,S_2(a))$, we can then use (P2) to obtain 
\begin{equation} \label{Eq:minoration_noyau_derive}
|K'(\sqrt{t^2 + a^2})| \geqslant M_1 M_2(a) \sqrt{t^2 + a^2} K(t).
\end{equation}
Independently from this, we have the following algebraic fact:

\begin{equation}
\left\| \deIrivee \right\| \geqslant \frac{1}{n} \sum_{k \neq l} \left| \deIrivee^{(kl)} \right|.
\end{equation}

Because we use a well-behaved anisotropic geometric kernel, defining the function $M_i^{(kl)} :(0,+\infty) \rightarrow (0,+\infty)$ ; $t \mapsto \left( x_j^{(k)} - x_j^{(l)} \right)^2$, we can write:

\begin{equation}
\left| \deIrivee^{(kl)} \right| = -  \deIrivee^{(kl)}
= \left( x_i^{(k)} - x_i^{(l)} \right)^2 \mu_i \frac{K'\left(\|(\bs{x}^{(k)} - \bs{x}^{(l)}) \bs{\mu} \|\right)}{\| (\bs{x}^{(k)} - \bs{x}^{(l)}) \bs{\mu} \| }.
\end{equation}

Setting $a_{kl}:=|x_i^{(k)} - x_i^{(l)}| \mu_i$ and $t_{kl}:= \|(\bs{x}_{-i}^{(k)} - \bs{x}_{-i}^{(l)}) \bs{\mu}_{-i}\|$ (and thus, naturally, $\sqrt{t_{kl}^2 + a_{kl}^2} = \|(\bs{x}^{(k)} - \bs{x}^{(l)}) \bs{\mu}\|$), and provided $\normeI$ is sufficiently large to make all $t_{kl}$s meet the conditions  necessary to apply (P1) and (P2) (that depend in the case of (P2) on the $a_{kl}$s), Equation \eqref{Eq:minoration_noyau_derive} yields the existence of some number $m_i^{(kl)}(\mu_i)>0$ such that 

\begin{equation}
\frac{K'\left(\|(\bs{x}^{(k)} - \bs{x}^{(l)}) \bs{\mu} \|\right)}{\| (\bs{x}^{(k)} - \bs{x}^{(l)}) \bs{\mu} \| }
\geqslant m_i^{(kl)}(\mu_i) K(\|(\bs{x}_{-i}^{(k)} - \bs{x}_{-i}^{(l)}) \bs{\mu}_{-i}\|)
= m_i^{(kl)}(\mu_i) \coIrrI^{(kl)}.
\end{equation}

Finally, setting $m_i(\mu_i) :=  \mu_i \min_{k \neq l} \left[ \left( x_i^{(k)} - x_i^{(l)} \right)^2 m_i^{(kl)}(\mu_i) \right] $, we get 

\begin{equation}
\left\| \deIrivee \right\| \geqslant \frac{m_i(\mu_i)}{n} \sum_{k \neq l} \coIrrI^{(kl)} .
\end{equation}

Then, applying Lemma \ref{Lem:densite_minoree} yields the result.
\end{proof}

\begin{prop} \label{Prop:grandmu_anis_geom_Matern}
Matérn one-dimensional kernels with smoothness parameter $\nu>1$ have the properties (P1) and (P2) of Proposition \ref{Prop:grandmu_anis_geom}.
\end{prop}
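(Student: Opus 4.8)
The plan is to argue directly from the explicit modified-Bessel representation of the one-dimensional kernel. Setting $c_\nu := 1/(\Gamma(\nu)2^{\nu-1})$, Equation \eqref{Eq:Matern_anis_geom} specialised to $r=1$ reads $K(t) = c_\nu (2\sqrt{\nu}\,t)^{\nu}\,\mathcal{K}_\nu(2\sqrt{\nu}\,t)$ for $t>0$. Differentiating by means of formula 9.6.28 of \citet{AS64}, exactly as in the proof of Lemma \ref{Lem:derivee_matern_anisgeom}, gives $|K'(t)| = 2\sqrt{\nu}\,c_\nu (2\sqrt{\nu}\,t)^{\nu}\,\mathcal{K}_{\nu-1}(2\sqrt{\nu}\,t)$, whence
\[ \frac{|K'(t)|}{K(t)} = 2\sqrt{\nu}\,\frac{\mathcal{K}_{\nu-1}(2\sqrt{\nu}\,t)}{\mathcal{K}_\nu(2\sqrt{\nu}\,t)}. \]
Both (P1) and (P2) will then be read off from the large-argument behaviour of $\mathcal{K}_\nu$, for which I would use the asymptotic $\mathcal{K}_\nu(z)\sim\sqrt{\pi/(2z)}\,e^{-z}$ as $z\to+\infty$ (formula 9.7.2 of \citet{AS64}), valid for each fixed order. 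The restriction $\nu>1$ ensures that the order $\nu-1$ of the Bessel function appearing in $K'$ is positive, so that $K'$ decays as cleanly as $K$ itself.

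I would treat (P2) first, as the more routine of the two. The asymptotic above yields $K(t)\sim C_\nu\,t^{\nu-1/2}e^{-2\sqrt{\nu}\,t}$ for some $C_\nu>0$, so that for each fixed $a>0$ one has $K(t+a)/K(t)\to e^{-2\sqrt{\nu}\,a}>0$ as $t\to+\infty$. Choosing $M_2(a):=\tfrac12 e^{-2\sqrt{\nu}\,a}$ and $S_2(a)$ large enough that the ratio exceeds $M_2(a)$ for $t\geqslant S_2(a)$ then gives (P2); this step uses only $\nu>0$ together with the eventual positivity and monotonicity of $K$.

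The main obstacle will be (P1). Everything reduces to the modified-Bessel quotient $\rho_\nu(z):=\mathcal{K}_{\nu-1}(z)/\mathcal{K}_\nu(z)$, since (P1) amounts to $2\sqrt{\nu}\,\rho_\nu(2\sqrt{\nu}\,t)\geqslant M_1\,t$ for $t\geqslant S_1$. To control $\rho_\nu$ from below for large argument I would rely on the three-term recurrence $\mathcal{K}_{\nu-1}(z)=\mathcal{K}_{\nu+1}(z)-\tfrac{2\nu}{z}\mathcal{K}_\nu(z)$ and on the known monotonicity and Turán-type inequalities for ratios of modified Bessel functions, so as to pin down not merely the leading order of $\rho_\nu(z)$ but its precise behaviour. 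This is the step I would scrutinise most carefully: the leading asymptotics alone give only $|K'(t)|/K(t)\to 2\sqrt{\nu}$, a bounded quantity, so reconciling this with the linear factor $t$ on the right-hand side of (P1) is exactly where the difficulty concentrates and where the hypothesis $\nu>1$ must be exploited in full. Once (P1) is established in the stated form, Proposition \ref{Prop:grandmu_anis_geom} applies verbatim and concludes the argument.
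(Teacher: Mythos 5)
Your computation of the logarithmic derivative is correct, and it is precisely because it is correct that your plan for (P1) cannot be completed: the (P1) half of the statement is in fact false for Matérn kernels. The quotient $|K'(t)|/K(t)=2\sqrt{\nu}\,\mathcal{K}_{\nu-1}(2\sqrt{\nu}\,t)/\mathcal{K}_{\nu}(2\sqrt{\nu}\,t)$ converges to $2\sqrt{\nu}$ by 9.7.2 of \citet{AS64} and is continuous, hence bounded on every interval $[S_1,+\infty)$; no Turán-type or recurrence-based refinement of the Bessel quotient can manufacture the unbounded factor $t$ that (P1) demands, and $|K'(t)|\geqslant M_1\,t\,K(t)$ fails for every $M_1>0$ once $t$ is large enough. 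A bare-hands check: for $\nu=3/2$ the paper's normalization gives $K(t)=(1+\sqrt{6}\,t)\,e^{-\sqrt{6}\,t}$, so $|K'(t)|=6\,t\,e^{-\sqrt{6}\,t}$ and $|K'(t)|/\bigl(t\,K(t)\bigr)=6/(1+\sqrt{6}\,t)\to 0$. The obstruction you isolate in your final paragraph is therefore not a technical difficulty to be overcome but a disproof of (P1); only your treatment of (P2), which coincides with the paper's, goes through (and indeed needs only $\nu>0$).

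The paper's own proof reaches the opposite conclusion because it leans on the rewriting \eqref{Eq:deriv_anis_geom_3cas} of Lemma \ref{Lem:derivee_matern_anisgeom}, which for $\nu>1$ reads $|K'(t)|=\tfrac{2\nu}{\nu-1}\,t\,K_{1,\nu-1}(t)$, and then uses $K_{1,\nu-1}(t)\geqslant K_{1,\nu}(t)$ for large $t$ (true, since $K_{1,\nu-1}$ decays like $e^{-2\sqrt{\nu-1}\,t}$, more slowly than $e^{-2\sqrt{\nu}\,t}$). But that rewriting is itself flawed: formula 9.6.28 produces $\mathcal{K}_{\nu-1}$ evaluated at $2\sqrt{\nu}\,t$, whereas $K_{1,\nu-1}(t)$ as defined by \eqref{Eq:Matern_anis_geom} evaluates it at $2\sqrt{\nu-1}\,t$. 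The correct identity is $|K'(t)|=\tfrac{2\nu}{\nu-1}\,t\,K_{1,\nu-1}\bigl(\sqrt{\nu/(\nu-1)}\,t\bigr)$, and the rescaled argument restores the decay rate $e^{-2\sqrt{\nu}\,t}$, so that the ratio to $K_{1,\nu}(t)$ tends to $0$ like $1/t$ rather than diverging. Your blind attempt has thus uncovered a genuine error in the paper rather than left a gap of your own making; note that the failure of (P1) also undermines its downstream use in the proof of Proposition \ref{Prop:grandmu_anis_geom}, where the factor $t$ is exactly what is needed to absorb the division by $\|(\bs{x}^{(k)}-\bs{x}^{(l)})\,\bs{\mu}\|$, so that argument requires repair as well.
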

\begin{proof}
(P1) is given by Lemma \ref{Lem:derivee_matern_anisgeom}, after noticing that, denoting by $K_\nu$ the Matérn one-dimensional kernel of smoothness $\nu>1$, provided $t$ is sufficiently large, $K_\nu(t) \leqslant K_{\nu-1}(t)$. 

This inequality ensues from the fact that $\forall \nu \geqslant 0$, as $t \to +\infty$, $\mathcal{K}_\nu(t) \sim \sqrt{\pi} \exp (-t) / \sqrt{2t} $ (\cite{AS64} 9.7.2) and thus $K_\nu(t) \sim 2 / \Gamma(\nu) (\sqrt{\nu} t)^\nu \sqrt{ \pi / (4 \sqrt{\nu} t) } \exp ( - 2 \sqrt{\nu} t )$. Moreover, this last equivalence relation also implies (P2).
\end{proof}

\begin{prop} \label{Prop:minoration_prior}
For Matérn anisotropic geometric kernels with smoothness $\nu>1$ and Matérn tensorized correlation kernels with smoothness $\nu>0$,

for any $\delta>0$, $i\in [\![1,r]\!]$ and $\mu_i \in (0,+\infty)$, there exists $b_{i,\delta}(\mu_i)>0$ such that, if $\normeI \geqslant \delta$, then $\pi_i(\mu_i | \manqueI) \geqslant b_{i,\delta}(\mu_i)$.
\end{prop}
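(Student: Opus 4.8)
The plan is to split the region $\{\manqueI : \normeI \geqslant \delta\}$ into an unbounded tail, on which the asymptotic lower bounds already proven apply verbatim, and a compact remainder, on which I will invoke continuity of the density together with compactness.

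First I would fix $i$, $\delta$ and $\mu_i$, and record the tail bound. In the tensorized case, Proposition \ref{Prop:grandmu_tens} provides a threshold $S>0$ (not depending on $\mu_i$) and a value $M_i(\mu_i)>0$ with $\pi_i(\mu_i | \manqueI) \geqslant M_i(\mu_i)$ as soon as $\normeI \geqslant S$. In the anisotropic geometric case with $\nu>1$, Proposition \ref{Prop:grandmu_anis_geom_Matern} checks that the one-dimensional Matérn kernel enjoys (P1) and (P2), so Proposition \ref{Prop:grandmu_anis_geom} yields a threshold $s_i(\mu_i)>0$ and a value $m_i(\mu_i)>0$ with the same kind of bound for $\normeI \geqslant s_i(\mu_i)$. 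Writing $S$ for this threshold in both cases, I note that if $\delta \geqslant S$ the tail bound already covers the whole region, so the proof is finished.

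Next I would treat the compact piece $C := \{\manqueI \in [0,+\infty)^{r-1} : \delta \leqslant \normeI \leqslant S\}$, which is nonempty only when $\delta<S$. This set is closed and bounded in $\R^{r-1}$, hence compact, and on it $\normeI \geqslant \delta>0$ forces $\manqueI \neq \bs{0}_{r-1}$; combined with $\mu_i \neq 0$, this places $\bs{\mu}$ in the domain where Proposition \ref{Prop:densite_continue} guarantees that $\manqueI \mapsto \pi_i(\mu_i | \manqueI)$ is well defined and continuous. A continuous function on a nonempty compact set attains its minimum, so it remains only to argue that this minimum is strictly positive, i.e. that the density is positive throughout $C$. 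Finally I would set $b_{i,\delta}(\mu_i)$ to be the minimum of $M_i(\mu_i)$ (resp. $m_i(\mu_i)$) and this compact minimum, and check the two regimes $\normeI \geqslant S$ and $\delta \leqslant \normeI \leqslant S$ separately.

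The one point requiring a genuine argument is the strict positivity of the density on $C$. Since $\pi_i(\mu_i | \manqueI)$ equals $f_i(\mu_i | \manqueI) = \sqrt{[\bs{\mathcal{I}}(\bs{\mu})]_{ii}}$ divided by a finite positive normalizer (as established in the proof of Proposition \ref{Prop:densite_continue}), positivity reduces to $[\bs{\mathcal{I}}(\bs{\mu})]_{ii}>0$. By the equality case of the Cauchy--Schwarz inequality applied to the eigenvalues of $\deIrivee \coIrr^{-1}$, vanishing of $[\bs{\mathcal{I}}(\bs{\mu})]_{ii}$ would force $\deIrivee$ to be a scalar multiple of $\coIrr$; but $\deIrivee$ has a null diagonal whereas $\coIrr$ does not, so that multiple would have to be zero, contradicting the fact that $\deIrivee$ does not vanish for a coordinate-distinct design (Lemmas \ref{Lem:derivee_matern_tens} and \ref{Lem:derivee_matern_anisgeom}). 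Apart from this, the argument is a routine gluing of compactness and continuity onto previously established estimates; the genuinely hard analytic work --- controlling the density as $\normeI \to \infty$ --- has already been carried out in Propositions \ref{Prop:grandmu_tens}, \ref{Prop:grandmu_anis_geom} and \ref{Prop:grandmu_anis_geom_Matern}.
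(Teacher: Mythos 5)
Your proof is correct and takes essentially the same route as the paper's: the tail estimate from Propositions \ref{Prop:grandmu_tens}/\ref{Prop:grandmu_anis_geom} (via \ref{Prop:grandmu_anis_geom_Matern}), continuity from Proposition \ref{Prop:densite_continue} on the compact annulus $\{\manqueI : \delta \leqslant \normeI \leqslant s_i(\mu_i)\}$, and the minimum of the two bounds. Your Cauchy--Schwarz argument for strict positivity of $[\bs{\mathcal{I}}(\bs{\mu})]_{ii}$ on that compact set is a welcome addition, since the paper asserts positivity there while Proposition \ref{Prop:densite_continue} only states well-definedness and continuity.
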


\begin{proof}
Matérn correlation kernels with such smoothness parameters make Proposition \ref{Prop:grandmu_tens} or \ref{Prop:grandmu_anis_geom} applicable. Therefore, there exist $s_i(\mu_i)>0$ and $m_i(\mu_i)>0$ such that, if $\normeI \geqslant s_i(\mu_i)$, $\pi_i(\mu_i | \manqueI) \geqslant m_i(\mu_i)$. Besides, we know from Proposition \ref{Prop:densite_continue} that $\pi_i(\mu_i | \manqueI)$, seen as a function of $\manqueI$, is continuous and positive over the compact set $\{ \manqueI : \delta \leqslant \normeI \leqslant s_i(\mu_i)\}$. Thus its minimum $\tilde{m}_{i,\delta}(\mu_i)$ on this set is positive and we obtain the result by setting $b_{i,\delta}(\mu_i) := \min(m_i(\mu_i),\tilde{m}_{i,\delta}(\mu_i))$.
\end{proof}

\begin{prop} \label{Prop:minoration_proba}
For Matérn anisotropic geometric correlation kernels with smoothness $\nu>1$ and for Matérn tensorized correlation kernels with smoothness $\nu>0$,
for any $\bs{y} \in \R^n \setminus \{0\}^n$, any $\delta>0$ and any $\mu_i \in (0,+\infty)$, there exists $b_{i,\delta,\bs{y}}(\mu_i)>0$ such that, if $\normeI \geqslant \delta$, then $\pi_i(\mu_i | \bs{y}, \manqueI) \geqslant b_{i,\delta, \bs{y}}(\mu_i)$.
\end{prop}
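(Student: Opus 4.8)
The plan is to derive the posterior bound from the prior bound of Proposition \ref{Prop:minoration_prior} by controlling the integrated likelihood. Writing the posterior conditional as the prior conditional reweighted by the likelihood,
\begin{equation}
\pi_i(\mu_i | \bs{y}, \manqueI) = \pi_i(\mu_i | \manqueI) \; \frac{L^1(\bs{y} | \mu_i, \manqueI)}{\int_0^\infty \pi_i(t | \manqueI) \, L^1(\bs{y} | \mu_i = t, \manqueI) \, dt},
\end{equation}
where $L^1$ is the integrated likelihood of Equation \eqref{Eq:vraisemblance_integree} (its multiplicative constant cancels between numerator and denominator). Since Proposition \ref{Prop:minoration_prior} already gives $\pi_i(\mu_i | \manqueI) \geqslant b_{i,\delta}(\mu_i)$ whenever $\normeI \geqslant \delta$, it suffices to bound the likelihood factor below by a positive constant. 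I would do this by bounding $L^1(\bs{y} | \mu_i, \manqueI)$ below at the fixed $\mu_i$ and the normalizing integral above, both uniformly over $\{\normeI \geqslant \delta\}$; because $\int_0^\infty \pi_i(t | \manqueI)\,dt = 1$, a uniform upper bound on $L^1$ immediately bounds the denominator.

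Recalling $L^1(\bs{y} | \bs{\mu}) \propto |\coIrr|^{-1/2} (\bs{y}\trans \coIrr^{-1} \bs{y})^{-n/2}$, two of the four required estimates are elementary and hold for every $\bs{\mu} \in (0,+\infty)^r$: since $\coIrr$ is a correlation matrix, Hadamard's inequality gives $|\coIrr| \leqslant 1$, hence $|\coIrr|^{-1/2} \geqslant 1$; and $\Tr(\coIrr) = n$ forces every eigenvalue to be at most $n$, hence $\bs{y}\trans \coIrr^{-1} \bs{y} \geqslant \| \bs{y} \|^2 / n > 0$ for $\bs{y} \neq \bs{0}$. The remaining two estimates—an upper bound on $|\coIrr|^{-1/2}$ and the upper bound $\bs{y}\trans \coIrr^{-1} \bs{y} \leqslant \| \bs{y} \|^2/\lambda_{\min}(\coIrr)$—both reduce to a single statement: the smallest eigenvalue $\lambda_{\min}(\coIrr)$ is bounded below by some $\lambda_\delta > 0$, uniformly over $\{ \bs{\mu} : \normeI \geqslant \delta, \, \mu_i > 0 \}$.

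This uniform conditioning of $\coIrr$ is the main obstacle, since the feasible set is unbounded and reaches the boundary $\mu_i = 0$. I would split it at a large radius $R$. For $\| \bs{\mu} \| \geqslant R$, coordinate-distinctness forces every pairwise distance to grow like $\| \bs{\mu} \|$ (geometric case) or forces at least one kernel factor to vanish (tensorized case), so all off-diagonal entries of $\coIrr$ tend to $0$ uniformly; choosing $R$ large enough yields $\lambda_{\min}(\coIrr) \geqslant 1/2$ there. The complementary region $\{ \normeI \geqslant \delta, \, \| \bs{\mu} \| \leqslant R \}$ is compact, and on it $\coIrr$ is continuous and positive definite. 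The delicate point is this positive definiteness: it holds because the Matérn kernel is strictly positive definite and no two design points can coincide under the $\bs{\mu}$-scaled geometry, as $\normeI \geqslant \delta$ keeps at least one coordinate direction active while coordinate-distinctness ensures that direction separates all design points. Hence $\lambda_{\min}(\coIrr)$ attains a positive minimum on this compact set, and $\lambda_\delta$ is taken as the smaller of that minimum and $1/2$.

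With $\lambda_{\min}(\coIrr) \geqslant \lambda_\delta$ in hand, I obtain $|\coIrr| \geqslant \lambda_\delta^{\,n}$ and $\bs{y}\trans \coIrr^{-1} \bs{y} \leqslant \| \bs{y} \|^2/\lambda_\delta$, giving a uniform upper bound $U_{\delta,\bs{y}}$ on $L^1$ (which controls the denominator) and a uniform lower bound $\ell_{\delta,\bs{y}}$ on $L^1(\bs{y} | \mu_i, \manqueI)$ at the fixed $\mu_i$. Combining with Proposition \ref{Prop:minoration_prior},
\begin{equation}
\pi_i(\mu_i | \bs{y}, \manqueI) \geqslant b_{i,\delta}(\mu_i) \, \frac{\ell_{\delta,\bs{y}}}{U_{\delta,\bs{y}}} =: b_{i,\delta,\bs{y}}(\mu_i) > 0,
\end{equation}
which is the desired bound. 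In fact the homogeneity of $L^1$ in $\bs{y}$ makes the $\| \bs{y} \|$-dependence cancel, so one may even take $b_{i,\delta,\bs{y}}(\mu_i) = b_{i,\delta}(\mu_i)\, \lambda_\delta^{\,n}\, n^{-n/2}$, but the statement requires only positivity.
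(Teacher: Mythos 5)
Your proof is correct and follows essentially the same route as the paper's: bound the integrated likelihood $L^1(\bs{y}|\bs{\mu})$ above and below by positive constants uniformly over $\{\normeI \geqslant \delta\}$, then combine with the prior lower bound of Proposition \ref{Prop:minoration_prior}. The only difference is that the paper simply asserts the uniform two-sided bound on $\|\coIrr^{-1}\|$ (hence on $L$) without justification, whereas you actually establish it via the eigenvalue argument (large-radius decay of off-diagonal entries plus compactness and strict positive definiteness on the remainder), which is a welcome addition rather than a deviation.
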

\begin{proof}
Set $\delta>0$ and $\bs{y} \in \R^n \setminus \{0\}^n$.
There exist $m_\delta>0$ and $M_\delta>0$ s.t. $\forall \bs{\mu} \in (0,+\infty)^r$, $\normeI \geqslant \delta \Rightarrow m_\delta \leqslant \| \coIrr^{-1} \| \leqslant M_\delta$, so there also exist $m_{\delta,\bs{y}}>0$ and $M_{\delta,\bs{y}}>0$ s.t. $m_{\delta,\bs{y}} \leqslant L(\bs{y}|\bs{\mu}) \leqslant M_{\delta,\bs{y}}$. This, combined with Proposition \ref{Prop:minoration_prior}, yields the result.
\end{proof}

\subsection{Accounting for high correlation: \texorpdfstring{$\norme \to 0$}{Lg}}

This part of the proof relies on the combination of some spectral study of the Matérn kernels and on the study of the matrices that are part of the series expansion of the correlation matrix $\coIrr$ when $\norme \to 0$ for three types of Matérn kernels: isotropic, tensorized and anisotropic geometric.

\begin{lem} \label{lemMinoreNoyauIso}
There exists a covariance kernel $\widetilde{K}_{r,\nu}$ such that for any design set $\bs{x}^{(1)},...,\bs{x}^{(n)}$, for all $\bs{\mu} \in (\R_+)^r$ and $\bs{\xi}=(\xi_1,...,\xi_n) \in \R^n$,
\begin{equation}
\sum_{j,k=1}^n \xi_j \xi_k  K_{r,\nu} \left(\left(\bs{x}^{(j)} - \bs{x}^{(k)}\right) \bs{\mu} \right) \geqslant 2^{-\frac{r}{2} - \nu} M_r( \nu ) f_{ r,\nu} ( \norme_{\infty} ) \sum_{j,k=1}^n \xi_j \xi_k  \widetilde{K}_{r,\nu} \left(\frac{ \bs{\mu} }{\norme_{\infty}} \left( \bs{x}^{(j)} - \bs{x}^{(k)} \right) \right)
\end{equation}
where $f_{r,\nu} (t) = (2 \sqrt{\nu})^{-r-2\nu} t^{-r} $ if $t \geqslant \left(2 \sqrt{\nu}\right)^{-1} $ and $f_{r,\nu} (t) = t^{2\nu} $ if $t \leqslant \left(2 \sqrt{\nu}\right)^{-1} $.
\end{lem}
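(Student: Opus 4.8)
The plan is to pass to the spectral (Bochner) representation of both quadratic forms, rescale the frequency variable by $\norme_\infty$, and thereby reduce the whole statement to a single pointwise lower bound on the Matérn spectral density. Throughout, write $s := \norme_\infty$ (positive; if $\bs{\mu}=\bs{0}$ then $f_{r,\nu}(s)=0$ and the bound is trivial). Since $\widehat{K}_{r,\nu}$ is nonnegative and integrable, the kernel is positive semidefinite and, in the Fourier convention fixed above, every real vector $\bs{\xi}$ satisfies
\begin{equation}
\sum_{j,k=1}^n \xi_j \xi_k K_{r,\nu}\left( (\bs{x}^{(j)} - \bs{x}^{(k)}) \bs{\mu} \right) = \int_{\R^r} \widehat{K}_{r,\nu}(\bs{\omega}) \left| \sum_{j=1}^n \xi_j e^{i \langle \bs{\omega} \bs{\mu} \,|\, \bs{x}^{(j)} \rangle} \right|^2 d\bs{\omega},
\end{equation}
where I used $\langle \bs{\omega} \,|\, (\bs{x}^{(j)} - \bs{x}^{(k)}) \bs{\mu} \rangle = \langle \bs{\omega} \bs{\mu} \,|\, \bs{x}^{(j)} - \bs{x}^{(k)} \rangle$.

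First I would substitute $\bs{\omega} = \bs{\zeta}/s$, whose Jacobian is $s^{-r}$, so that $\bs{\omega}\bs{\mu} = \bs{\zeta}(\bs{\mu}/s)$ and the exponent becomes $\langle \bs{\zeta} \,|\, (\bs{\mu}/\norme_\infty) \bs{x}^{(j)} \rangle$. Plugging in $\widehat{K}_{r,\nu}(\bs{\omega}) = M_r(\nu)(\|\bs{\omega}\|^2 + 4\nu)^{-\nu - r/2}$ and collecting the powers of $s$, the quadratic form equals
\begin{equation}
\int_{\R^r} \frac{M_r(\nu)\, s^{2\nu}}{\left( \|\bs{\zeta}\|^2 + 4\nu s^2 \right)^{\nu + \frac{r}{2}}} \left| \sum_{j=1}^n \xi_j e^{i \langle \bs{\zeta} \,|\, (\bs{\mu}/\norme_\infty) \bs{x}^{(j)} \rangle} \right|^2 d\bs{\zeta}.
\end{equation}
Next I would \emph{define} $\widetilde{K}_{r,\nu}$ to be the kernel with spectral density $\widehat{\widetilde{K}}_{r,\nu}(\bs{\zeta}) = 2^{\frac{r}{2}+\nu}(\|\bs{\zeta}\|^2 + 1)^{-\nu - \frac{r}{2}}$; this is nonnegative and integrable, so Bochner's theorem guarantees that $\widetilde{K}_{r,\nu}$ is a genuine covariance kernel. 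Applying the same representation and substitution to $\widetilde{K}_{r,\nu}$ shows that the right-hand side of the lemma equals $M_r(\nu) f_{r,\nu}(s)$ times $\int_{\R^r} (\|\bs{\zeta}\|^2 + 1)^{-\nu - r/2}\, \left| \sum_j \xi_j e^{i \langle \bs{\zeta} \,|\, (\bs{\mu}/\norme_\infty) \bs{x}^{(j)} \rangle} \right|^2 d\bs{\zeta}$, the factor $2^{r/2+\nu}$ absorbing the $2^{-r/2-\nu}$ in the statement. Since both integrals carry the same nonnegative weight, it suffices to prove the pointwise bound
\begin{equation}
\frac{s^{2\nu}}{\left( \|\bs{\zeta}\|^2 + 4\nu s^2 \right)^{\nu + \frac{r}{2}}} \geqslant \frac{f_{r,\nu}(s)}{\left( \|\bs{\zeta}\|^2 + 1 \right)^{\nu + \frac{r}{2}}}, \qquad \bs{\zeta} \in \R^r.
\end{equation}

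This pointwise inequality I would establish from the elementary estimate $\|\bs{\zeta}\|^2 + 4\nu s^2 \leqslant \max(1, 4\nu s^2)(\|\bs{\zeta}\|^2 + 1)$, splitting at the crossover $4\nu s^2 = 1$, i.e. $s = (2\sqrt{\nu})^{-1}$, which is exactly the threshold in the definition of $f_{r,\nu}$. For $s \leqslant (2\sqrt{\nu})^{-1}$ one has $4\nu s^2 \leqslant 1$, hence $\|\bs{\zeta}\|^2 + 4\nu s^2 \leqslant \|\bs{\zeta}\|^2 + 1$, and the bound holds with $f_{r,\nu}(s) = s^{2\nu}$. For $s \geqslant (2\sqrt{\nu})^{-1}$ one has $\|\bs{\zeta}\|^2 + 4\nu s^2 \leqslant 4\nu s^2(\|\bs{\zeta}\|^2 + 1)$; raising to the power $\nu + r/2$ and using $4\nu = (2\sqrt{\nu})^2$ turns $s^{2\nu}(4\nu s^2)^{-\nu - r/2}$ into exactly $(2\sqrt{\nu})^{-r - 2\nu} s^{-r} = f_{r,\nu}(s)$. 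The main obstacle is purely bookkeeping: arranging the powers of $s$, $2$ and $\nu$ so that the two regimes glue into the single function $f_{r,\nu}$ and reproduce the stated constant $2^{-r/2-\nu} M_r(\nu)$. The conceptual content lies entirely in the spectral representation and in recognizing that the threshold $(2\sqrt{\nu})^{-1}$ in $f_{r,\nu}$ is precisely the frequency-scaling crossover $4\nu s^2 = 1$.
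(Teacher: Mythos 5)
Your proof is correct and follows essentially the same route as the paper: Bochner's spectral representation, rescaling the frequency variable by $\norme_\infty$, and a pointwise lower bound on the spectral density split at the crossover $4\nu\norme_\infty^2=1$, which is exactly where $f_{r,\nu}$ changes regime. The only (cosmetic) difference is the choice of comparison kernel: the paper takes $\widehat{\widetilde{K}}_{r,\nu}(\bs{\omega})=\mathbf{1}_{\{\|\bs{\omega}\|\geqslant 1\}}\|\bs{\omega}\|^{-r-2\nu}$ after discarding the low-frequency part of the integral, whereas you keep the full density $2^{\frac{r}{2}+\nu}(\|\bs{\zeta}\|^2+1)^{-\nu-\frac{r}{2}}$; both are nonnegative and integrable, so both yield valid covariance kernels and serve equally well in the subsequent compactness argument.
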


\begin{proof}
For all $\bs{x},\bs{y} \in \R^r$, $K_{r,\nu} (\bs{x}-\bs{y}) = \int_{\R^r} \widehat{K}_{r,\nu} ( \bs{\omega} ) e^{ i \langle \bs{\omega} | \bs{x}-\bs{y} \rangle } d \bs{\omega} $.

\begin{equation}
\begin{split}
\sum_{j,k=1}^n \xi_j \xi_k  K_{r,\nu} \left(\left(\bs{x}^{(j)} - \bs{x}^{(k)}\right) \bs{\mu} \right)
&= \int_{\R^r} \widehat{K_{r,\nu}} (\bs{\omega}) \left| \sum_{j=1}^n \xi_j e^{i \langle \bs{\omega} \left| \bs{x}^{(j)}\bs{\mu} \right. \rangle }  \right|^2 d\bs{\omega} \\
&= M_r(\nu) \norme_{\infty}^{-r} \int_{\R^r} \left( 4 \nu + \norme_{\infty}^{-2} \| \bs{s} \|^2 \right)^{-\frac{r}{2} - \nu} \left| \sum_{j=1}^n \xi_j e^{i \langle \left. \frac{\bs{\mu}}{\norme_{\infty}} \bs{s} \right| \bs{x}^{(j)} \rangle } \right|^2 d \bs{s} \\
& \geqslant 2^{-\frac{r}{2} - \nu} M_r(\nu) f_{r,\nu} (\norme_{\infty}) \int_{\R^r \setminus B(0,1) }  \|\bs{s}\|^{-r-2\nu} \left| \sum_{j=1}^n \xi_j e^{i \langle   \bs{s} \left| \frac{\bs{\mu}}{\norme_{\infty}} \bs{x}^{(j)} \right. \rangle } \right|^2 d \bs{s}. \\
\end{split}
\end{equation}

Now, let $\widetilde{K}_{r,\nu}$ be the function with Fourier transform $\widehat{\widetilde{K}}_{r,\nu} (\bs{\omega}) = \mathbf{1}_{ \{\|\bs{\omega}\| \geqslant 1 \} } \| \bs{\omega} \|^{-r-2\nu}$. According to Bochner's theorem, $\widetilde{K}_{r,\nu}$ is a correlation kernel, which leads to the conclusion.
\end{proof}

\begin{lem} \label{lemMajor6Iso}
For every design set with coordinate-distinct points $\bs{x}^{(1)},...,\bs{x}^{(n)}$, there exists a constant $c_{\bs{x}}>0$ such that for all  $\bs{\mu} \in (\R_+)^r$,
\begin{equation}
\forall \bs{\xi}=(\xi_1,...,\xi_n) \in \R^n, \; \sum_{j,k=1}^n \xi_j \xi_k  K_{r,\nu} \left(\left(\bs{x}^{(j)} - \bs{x}^{(k)}\right) \bs{\mu} \right) \geqslant c_{\bs{x}} \| \bs{\xi} \|^2  2^{-\frac{r}{2} - \nu} M_r( \nu ) f_{ r,\nu} ( \norme_{\infty} ) 
\end{equation}
where $f_{r,\nu} (t) = (2 \sqrt{\nu})^{-r-2\nu} t^{-r} $ if $t \geqslant \left(2 \sqrt{\nu}\right)^{-1} $ and $f_{r,\nu} (t) = t^{2\nu} $ if $t \leqslant \left(2 \sqrt{\nu}\right)^{-1} $.
\end{lem}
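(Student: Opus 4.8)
The plan is to derive this directly from Lemma \ref{lemMinoreNoyauIso}, which already supplies the scalar factor $2^{-\frac{r}{2}-\nu} M_r(\nu) f_{r,\nu}(\norme_{\infty})$. It therefore suffices to bound the residual quadratic form $\sum_{j,k} \xi_j \xi_k \widetilde{K}_{r,\nu}\!\left(\frac{\bs{\mu}}{\norme_{\infty}}(\bs{x}^{(j)} - \bs{x}^{(k)})\right)$ from below by $c_{\bs{x}} \| \bs{\xi} \|^2$, uniformly in $\bs{\mu}$. Writing $\bs{v} := \bs{\mu}/\norme_{\infty}$ and letting $\bs{G}(\bs{v})$ be the $n \times n$ matrix with entries $[\bs{G}(\bs{v})]_{jk} = \widetilde{K}_{r,\nu}(\bs{v}(\bs{x}^{(j)} - \bs{x}^{(k)}))$, this quadratic form is exactly $\bs{\xi}\trans \bs{G}(\bs{v}) \bs{\xi}$. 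As $\bs{\mu}$ ranges over $(0,+\infty)^r$, the normalized vector $\bs{v}$ ranges over $\{v \in (0,1]^r : \|v\|_\infty = 1\}$, whose closure $V := \{v \in [0,1]^r : \|v\|_\infty = 1\}$ is compact. The goal thus reduces to showing that the smallest eigenvalue $\lambda_{\min}(\bs{G}(\bs{v}))$ admits a strictly positive lower bound over $V$.

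First I would establish that $\widetilde{K}_{r,\nu}$ is a \emph{strictly} positive definite kernel: its spectral density $\mathbf{1}_{\{\|\bs{\omega}\| \geqslant 1\}} \|\bs{\omega}\|^{-r-2\nu}$ is positive on a set with nonempty interior, so the standard Bochner-type criterion for strict positive definiteness applies. Consequently $\bs{G}(\bs{v})$ is positive definite, and $\lambda_{\min}(\bs{G}(\bs{v}))>0$, precisely when the rescaled design points $\bs{v}\bs{x}^{(1)},\dots,\bs{v}\bs{x}^{(n)}$ are pairwise distinct. This is where coordinate-distinctness enters: for $j \neq k$, two such points coincide only if $v_i(x_i^{(j)} - x_i^{(k)}) = 0$ for every $i$, and since coordinate-distinctness makes every component $x_i^{(j)} - x_i^{(k)}$ nonzero, this forces $\bs{v} = \bs{0}$, which is impossible because $\|\bs{v}\|_\infty = 1$. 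Hence the points stay distinct for every $\bs{v} \in V$, including the boundary points at which some coordinates of $\bs{v}$ vanish.

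Finally, since $\widetilde{K}_{r,\nu}$ is continuous (it is the Fourier transform of an integrable function, integrability holding because $\nu>0$), the entries of $\bs{G}(\bs{v})$, and therefore $\lambda_{\min}(\bs{G}(\cdot))$, depend continuously on $\bs{v}$. Being positive everywhere on the compact set $V$, this eigenvalue attains a strictly positive minimum $c_{\bs{x}} := \min_{\bs{v} \in V} \lambda_{\min}(\bs{G}(\bs{v})) > 0$; combining with Lemma \ref{lemMinoreNoyauIso} then gives the announced inequality. I expect the main obstacle to be the uniformity as some coordinates of $\bs{v}$ approach $0$: one must pass to the \emph{compact closure} $V$ rather than argue only on the open set where all $v_i>0$, and verify that distinctness of the rescaled points is not lost in that degenerate limit. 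Coordinate-distinctness is exactly what prevents this degeneration; without it, distinct design points could merge as a coordinate of $\bs{v}$ tends to $0$, forcing $c_{\bs{x}}$ down to $0$.
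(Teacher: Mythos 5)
Your proof is correct and follows the same route as the paper: reduce to the normalized vectors $\bs{\mu}/\norme_{\infty}$, observe that the resulting family of rescaled design sets is compact and (by coordinate-distinctness) consists only of sets with pairwise distinct points, and conclude via strict positive definiteness of $\widetilde{K}_{r,\nu}$ together with continuity of the smallest eigenvalue on that compact set. The paper's own proof is just a terser statement of exactly this argument; your version usefully makes explicit the strict positive definiteness of $\widetilde{K}_{r,\nu}$ and the boundary behaviour as coordinates of $\bs{v}$ vanish, both of which the paper leaves implicit.
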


\begin{proof}
For every design set $\bs{x}^{(1)},...,\bs{x}^{(n)}$, the set of all design sets that can be written $\frac{\bs{\mu}}{\norme_{\infty}} \bs{x}^{(1)},...,\frac{\bs{\mu}}{\norme_{\infty}} \bs{x}^{(n)}$ ($\bs{\mu} \in (\R_+)^r$) is compact. If the design set $\bs{x}^{(1)},...,\bs{x}^{(n)}$ has coordinate-distinct points, then every design set in the aforementioned compact set has no overlapping points. Thus the conclusion follows from Lemma \ref{lemMinoreNoyauIso}.
\end{proof}

\begin{prop} \label{Prop:vitesse_convergence_matern_anisotrope}
With Matérn anisotropic geometric or tensorized kernels, for every design set with coordinate-distinct points $\bs{x}^{(1)},...,\bs{x}^{(n)}$, as $\norme \to 0$, $\left\| \coIrr^{-1} \right\| = O(\norme^{-2\nu})$.
\end{prop}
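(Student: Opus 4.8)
The plan is to reduce the statement to a lower bound on the smallest eigenvalue of $\coIrr$. Since all norms on $n\times n$ matrices are equivalent, it suffices to control the spectral norm, and for the symmetric positive definite correlation matrix $\coIrr$ the spectral norm of $\coIrr^{-1}$ equals $1/\lambda_{\min}(\coIrr)$, where $\lambda_{\min}(\coIrr)=\min_{\|\bs{\xi}\|=1}\sum_{k,l=1}^n \xi_k\xi_l\,\coIrr^{(kl)}$. So the whole proposition amounts to showing $\lambda_{\min}(\coIrr)\geqslant C\,\norme^{2\nu}$ for all sufficiently small $\norme$, with $C>0$ depending only on the design set and $\nu$; then $\|\coIrr^{-1}\|=O(\norme^{-2\nu})$ follows at once. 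I will use freely that as $\norme\to0$ one has $\norme_\infty\to0$, so eventually $\norme_\infty\leqslant(2\sqrt{\nu})^{-1}$ and the relevant branch of $f_{r,\nu}$ is $f_{r,\nu}(t)=t^{2\nu}$; equivalence of norms on $\R^r$ also gives $\norme_\infty\asymp\norme$.

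For the anisotropic geometric kernel this is immediate. Here $\coIrr^{(kl)}=K_{r,\nu}\big((\bs{x}^{(k)}-\bs{x}^{(l)})\bs{\mu}\big)$, so the quadratic form $\sum_{k,l}\xi_k\xi_l\coIrr^{(kl)}$ is exactly the left-hand side of Lemma \ref{lemMajor6Iso}. That lemma gives $\lambda_{\min}(\coIrr)\geqslant c_{\bs{x}}\,2^{-r/2-\nu}M_r(\nu)\,f_{r,\nu}(\norme_\infty)$, which for small $\norme_\infty$ equals $c_{\bs{x}}\,2^{-r/2-\nu}M_r(\nu)\,\norme_\infty^{2\nu}$, and the reduction above finishes this case.

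The tensorized kernel is where the work lies, since Lemma \ref{lemMajor6Iso} concerns the radial kernel, not a product of one-dimensional kernels. The idea I would use is to exploit the Hadamard (entrywise) structure of the tensorized correlation matrix. Choosing a coordinate $j_0$ with $\mu_{j_0}=\norme_\infty$, write $\coIrr=A\circ B$ with $A^{(kl)}=K_{1,\nu}\big((x^{(k)}_{j_0}-x^{(l)}_{j_0})\mu_{j_0}\big)$ the one-dimensional Matérn correlation matrix along coordinate $j_0$ and $B^{(kl)}=\prod_{j\neq j_0}K_{1,\nu}\big((x^{(k)}_j-x^{(l)}_j)\mu_j\big)$. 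Both are positive semidefinite (the Schur product theorem makes the Hadamard product $B$ positive semidefinite), and $B$ has unit diagonal because $K_{1,\nu}(0)=1$. The eigenvalue form of the Schur product theorem, namely $A\circ B\succeq\lambda_{\min}(A)\,(I\circ B)$, then gives $\lambda_{\min}(\coIrr)\geqslant\lambda_{\min}(A)\min_k B^{(kk)}=\lambda_{\min}(A)$.

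It remains to bound $\lambda_{\min}(A)$, and this is exactly where coordinate-distinctness enters: it guarantees that $x^{(1)}_{j_0},\dots,x^{(n)}_{j_0}$ are pairwise distinct, so $A$ is the correlation matrix of a genuine non-degenerate one-dimensional design with inverse correlation length $\mu_{j_0}$. Applying Lemma \ref{lemMajor6Iso} in dimension $r=1$ to this design yields $\lambda_{\min}(A)\geqslant c'\,f_{1,\nu}(\mu_{j_0})=c'\,\mu_{j_0}^{2\nu}=c'\,\norme_\infty^{2\nu}$ for $\norme_\infty$ small, and combining with the Schur bound and the first-paragraph reduction gives $\|\coIrr^{-1}\|=O(\norme_\infty^{-2\nu})=O(\norme^{-2\nu})$. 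The main obstacle is recognizing the Hadamard reduction and invoking the Schur eigenvalue inequality in the right direction; the only routine verifications are that the constants from Lemma \ref{lemMajor6Iso} depend on the design set and $\nu$ alone (hence are uniform in $\bs{\mu}$) and that $f_{1,\nu}$ sits on its $t^{2\nu}$ branch once $\norme_\infty$ is small.
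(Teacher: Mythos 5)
Your proof is correct. For the anisotropic geometric case it coincides with the paper's: both reduce the claim to the eigenvalue bound $\lambda_{\min}(\coIrr)\geqslant c\,\norme_{\infty}^{2\nu}$ supplied directly by Lemma \ref{lemMajor6Iso}. For the tensorized case, however, you take a genuinely different route. The paper merely asserts that ``analogous results to Lemma \ref{lemMinoreNoyauIso} and then Lemma \ref{lemMajor6Iso} may be used,'' i.e.\ it proposes redoing the Fourier-analytic spectral lower bound with $\widehat{K}_{r,\nu}^{tens}=\prod_j\widehat{K_{1,\nu}}(\omega_j)$ in place of the radial transform, a computation it does not carry out. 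You instead exploit the Hadamard structure $\coIrr^{tens}=A\circ B$, with $A$ the one-dimensional Matérn correlation matrix along a coordinate $j_0$ realizing $\norme_{\infty}$, and invoke the Schur product theorem in the eigenvalue form $A\circ B\succeq\lambda_{\min}(A)\,(I\circ B)=\lambda_{\min}(A)\,I$, using that $B$ is positive semidefinite (Hadamard product of kernel matrices) with unit diagonal. Coordinate-distinctness guarantees the projected one-dimensional design is non-degenerate, so Lemma \ref{lemMajor6Iso} with $r=1$ yields $\lambda_{\min}(A)\geqslant c'\mu_{j_0}^{2\nu}=c'\norme_{\infty}^{2\nu}$, and equivalence of norms finishes the argument. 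This buys a complete, elementary proof of the tensorized half that reuses the already-established one-dimensional bound instead of a new anisotropic Fourier estimate; the only housekeeping to make explicit is that $c'$ depends on the choice of $j_0$, which itself depends on $\bs{\mu}$, so you should take the minimum of the finitely many constants over $j_0\in[\![1,r]\!]$ to obtain a bound uniform in $\bs{\mu}$.
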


\begin{proof}
For Matérn anisotropic geometric kernels, we need only apply Lemma \ref{lemMajor6Iso}. In the case of tensorized Matérn kernels, analoguous results to Lemma \ref{lemMinoreNoyauIso} and then Lemma \ref{lemMajor6Iso} may be used.
\end{proof}

\citet{AS64} give the following results on the modified Bessel function of second kind (usually noted $K_\nu$ and which we note $\mathcal{K}_\nu$ in order to avoid confusion with the Matérn correlation kernel). If $I_\nu$ is the modified Bessel function of first kind and $\psi$ is the function defined in (6.3.2) by $\psi : \mathbb{N} \setminus \{0\} \rightarrow \R$ ; $k \mapsto - \gamma + \sum_{i=1}^{k-1} i^{-1}$ :

\begin{align}
I_\nu(z) &= \left(\frac{1}{2} z\right)^{\nu} \sum_{k=0}^\infty \frac{\left(\frac{1}{4} z^2\right)^k}{k! \Gamma(\nu+k+1)} & \text{(9.6.10  in   \citep{AS64})} \nonumber \\
\mathcal{K}_\nu(z) &= \frac{1}{2} \pi \frac{I_{-\nu}(z) - I_\nu(z)}{\sin(\nu z)} \quad \text{if $\nu \notin \mathbb{Z}$}. & \text{(9.6.2  in   \citep{AS64})} \nonumber
\end{align}

This gives us the series expansion of $K_{1,\nu}(z)$ ($\nu \in [0,+\infty) \setminus \N$) when $z \rightarrow 0$:

\begin{equation}
\begin{split}
K_{1,\nu}(z) &= \frac{\pi}{\Gamma(\nu) \sin(\nu \pi)} \left( \sum_{0 \leqslant k < \nu} \frac{\nu^k z^{2k}}{k! \Gamma(-\nu+k+1)}
- \frac{\nu^\nu z^{2\nu}}{\Gamma(\nu+1)} + o\left(z^{2\nu}\right) \right) \\
&= \frac{\pi}{\Gamma(\nu) \sin(\nu \pi) \Gamma(-\nu+1)} \left( \sum_{0 \leqslant k < \nu} \frac{\Gamma(-\nu+1)}{k! \Gamma(-\nu+k+1)} \nu^k z^{2k}
- \frac{\Gamma(-\nu+1)}{\Gamma(\nu+1)} \nu^\nu z^{2\nu} + o\left(z^{2\nu}\right) \right) \\
&= \sum_{0 \leqslant k < \nu} \frac{\Gamma(-\nu+1)}{k! \Gamma(-\nu+k+1)} \nu^k z^{2k}
+ \frac{\Gamma(-\nu)}{\Gamma(\nu)} \nu^\nu z^{2\nu} + o\left(z^{2\nu}\right) \\
&= \sum_{0 \leqslant k < \nu} (-1)^k \frac{\Gamma(\nu-k)}{k! \Gamma(\nu)} \nu^k z^{2k}
+ \frac{\Gamma(-\nu)}{\Gamma(\nu)} \nu^\nu z^{2\nu} + o\left(z^{2\nu}\right).
\end{split}
\end{equation}

In the remainder of this subsection, we consider a fixed design set with $n$ coordinate-distinct points $\bs{x}^{(k)}$ ($k \in [\![1,n]\!]$) in $\R^r$. Moreover, all Matérn kernels we consider are assumed to have non-integer smoothness parameter $\nu$. \medskip

Let us now define, for every nonnegative integer $k<\nu$ the matrix $\bs{D}^k$ whose $(i,j)$ element is

\begin{equation}
\bs{D}^k(i,j) := (-1)^k \frac{\Gamma(\nu-k)}{k! \Gamma(\nu)} \nu^k \left\| \bs{x}^{(j)} - \bs{x}^{(k)} \right\|^{2k}.
\end{equation}

Let us also define the matrix $\bs{D}^\nu$ whose $(i,j)$ element is

\begin{align}
\bs{D}^\nu(i,j) &:= \frac{\Gamma(-\nu)}{\Gamma(\nu)} \nu^\nu \left\| \bs{x}^{(j)} - \bs{x}^{(k)} \right\|^{2\nu} &\mathrm{if} \; \nu \in [0,+\infty) \setminus \mathbb{N}. 
\end{align}

If the correlation kernel is Matérn isotropic, $\co$ has the following series expansion if $\nu$ is not an integer when $\mu \rightarrow 0+$:

\begin{equation} \label{Eq:devel_limite_co}
\co = \sum_{0 \leqslant k < \nu} \mu^{2k} \bs{D}^k + \mu^{2\nu} \bs{D}^\nu + \bs{R}_{\mu}.
\end{equation}

In this expansion, $\mu^{-2\nu} \|\bs{R}_{\mu}\| \rightarrow 0$.

For any integer $i \in [\![1,r]\!]$ and any nonnegative integer $k < \nu$ define 
the matrix $\bs{D}_i^k$ whose $(m,p)$ element is

\begin{equation}
\bs{D}_i^k(m,p) := (-1)^k \frac{\Gamma(\nu-k)}{k! \Gamma(\nu)} \nu^k \left| x_i^{(m)} - x_i^{(p)} \right|^{2k}
\end{equation}

and also the matrix $\bs{D}_i^{\nu}$ whose $(m,p)$ element is

\begin{align}
\bs{D}_i^\nu(m,p) &:= \frac{\Gamma(-\nu)}{\Gamma(\nu)} \nu^\nu \left| x_i^{(m)} - x_i^{(p)} \right|^{2\nu}. 
\end{align}

For every $i \in [\![1,r]\!]$, if the points in the design set differed only through their $i$-th coordinate, the series expansion of the correlation matrix (using a Matérn anisotropic geometric or tensorized kernel) when $\norme \to 0$ (and thus when $\mu_i \to 0$) would be 

\begin{equation}
\coI = \sum_{0 \leqslant k < \nu} \mu_i^{2k} \bs{D}_i^k + \mu_i^{2\nu} 
 \bs{D}_i^\nu + \bs{R}_{\mu_i}
\end{equation}

where $\mu_i^{-2\nu}$ 
$ \|\bs{R}_{\mu_i}\| \rightarrow 0$ as $\mu_i \to 0$.  \medskip 

Note the following identities:

\begin{align} \label{Eq:1D_matrices_DL}
\bs{D}_i^0 &= \bs{11} \trans; \\
\bs{D}_i^1 &= - \frac{\Gamma(\nu-1)}{\Gamma(\nu)} \nu 
\left\{ \bs{1} \left(\bs{X}_i^{\circ 2}\right)  \trans + \left(\bs{X}_i^{\circ 2}\right) \bs{1} \trans -2 \bs{X}_i \bs{X}_i \trans \right\}; \\
\bs{D}_i^2 &= \frac{\Gamma(\nu-2)}{\Gamma(\nu)} \nu^2 
\left\{ \bs{1} \left(\bs{X}_i^{\circ 4}\right)  \trans + \left(\bs{X}_i^{\circ 4}\right) \bs{1} \trans -4 \bs{X}_i \left( \bs{X}_i^{\circ 3} \right) \trans - 4 \left( \bs{X}_i^{\circ 3} \right) \bs{X}_i \trans + 6 \left( \bs{X}_i^{\circ 2} \right) \left( \bs{X}_i^{\circ 2} \right) \trans \right\} .
\end{align}

If a tensorized correlation kernel is used, the correlation matrix $\coIrr^{tens}$ may be written

\begin{equation}
\coIrr^{tens} = \prod_{i \in [\![1,r]\!]}^\circ \coI
\end{equation}

where the subscript $\circ$ above the symbol $\prod$ serves to denote the Hadamard product of matrices. \medskip

In case a Matérn anisotropic geometric kernel is used, then define for any nonnegative interger $k < \nu$ the matrix $\bs{D}^k (\bs{\mu})$ whose $(m,p)$ element is

\begin{equation}
\bs{D}^k(\bs{\mu}) (m,p) := (-1)^k \frac{\Gamma(\nu-k)}{k! \Gamma(\nu)} \nu^k d_{m,p} (\bs{\mu})^{2k}
\end{equation} 

where $d_{m,p}(\bs{\mu}) = \left\| \left(\bs{x}^{(m)} - \bs{x}^{(p)} \right) \bs{\mu} \right\|$. \medskip

And, similarly, we may define 
 the matrix $\bs{D}^{\nu} (\bs{\mu})$ whose $(m,p)$ element is
\begin{align}
\bs{D}^{\nu} (\bs{\mu}) (m,p) &:= \frac{\Gamma(-\nu)}{\Gamma(\nu)} \nu^\nu d_{m,p} (\bs{\mu})^{2\nu} &\mathrm{if} \; \nu \in [0,+\infty) \setminus \mathbb{N}. 
\end{align}

We thus have (if $\nu \in [0,+\infty) \setminus \mathbb{N}$)

\begin{equation} \label{Eq:anis_geom_DL}
\coIrr^{geom} = \sum_{0 \leqslant k < \nu} \bs{D}^k (\bs{\mu}) + \bs{D}^\nu (\bs{\mu}) + \bs{R}_{\bs{\mu}}^{geom}
\end{equation}

where $\norme^{-2\nu}$ 
$\|\bs{R}_{\bs{\mu}}^{geom}\| \rightarrow 0$ as $\norme \to 0$. \medskip

Similar identities to those of Equation \eqref{Eq:1D_matrices_DL} can be derived to make Equation \eqref{Eq:anis_geom_DL} more explicit for small values of $\nu$.

\begin{align} \label{Eq:multiD_matrices_DL}
\bs{D}^0 (\bs{\mu}) &= \bs{11} \trans; \\
\bs{D}^1 (\bs{\mu}) &= - \frac{\nu}{\nu-1}
\left\{ \sum_{i=1}^r \mu_i^2 \left( \bs{1} \left(\bs{X}_i^{\circ 2}\right)  \trans + \left(\bs{X}_i^{\circ 2}\right) \bs{1} \trans -2 \bs{X}_i \bs{X}_i \trans \right) \right\}; \\
\bs{D}^2 (\bs{\mu}) &= \frac{\nu^2}{(\nu-1)(\nu-2)} 
\left\{ \sum_{i,j \in [\![1,r]\!]} \mu_i^2 \mu_j^2 \left( \bs{1} \left(\bs{X}_i^{\circ 2} \circ \bs{X}_j^{\circ 2}\right)  \trans + \left(\bs{X}_i^{\circ 2} \circ \bs{X}_j^{\circ 2}\right) \bs{1} \trans \right. \right. \nonumber \\
& \qquad -2 \bs{X}_i \left( \bs{X}_i \circ \bs{X}_j^{\circ 2} \right) \trans - 2 \left( \bs{X}_i \circ \bs{X}_j^{\circ 2} \right) \bs{X}_i \trans -2 \bs{X}_j \left( \bs{X}_j \circ \bs{X}_i^{\circ 2} \right) \trans - 2 \left( \bs{X}_j \circ \bs{X}_i^{\circ 2} \right) \bs{X}_j \trans \nonumber \\
& \qquad \left. \left.
 + \left( \bs{X}_i^{\circ 2} \right) \left( \bs{X}_j^{\circ 2} \right) \trans + \left( \bs{X}_j^{\circ 2} \right) \left( \bs{X}_i^{\circ 2} \right) \trans + 4 \left( \bs{X}_i \circ \bs{X}_j \right) \left( \bs{X}_i \circ \bs{X}_j \right) \trans \right) \right\} .
\end{align}

Fortunately, for small values of $\nu$, $\coIrr^{tens}$ can also be simply written.

\begin{align} \label{Eq:tens_DL}
\mathrm{For} \; \nu \in (0,1): \quad \coIrr^{tens}&= \bs{11} \trans + \sum_{i=1}^r \mu_i^{2\nu} \bs{D}_i^{\nu} + \bs{R}_{\bs{\mu}}^{tens}. \\
\mathrm{For} \; \nu \in (1,2): \quad \coIrr^{tens}&= \bs{11} \trans + \bs{D}^1 (\bs{\mu}) + \sum_{i=1}^r \mu_i^{2\nu} \bs{D}_i^{\nu} + \bs{R}_{\bs{\mu}}^{tens} .\\
\mathrm{For} \; \nu \in (2,3): \quad \coIrr^{tens}&= \bs{11} \trans + \bs{D}^1 (\bs{\mu}) + \frac{\nu-2}{\nu-1} \bs{D}^2 (\bs{\mu}) + \sum_{i=1}^r \mu_i^4 \bs{D}_i^2 + \sum_{i=1}^r \mu_i^{2\nu} \bs{D}_i^{\nu} + \bs{R}_{\bs{\mu}}^{tens} .
\end{align}

In the three expressions above, $\norme^{-2\nu}$ 
$\|\bs{R}_{\bs{\mu}}^{tens}\| \rightarrow 0$ as $\norme \to 0$. \medskip 

Define $k_\nu$ as the orthogonal complement in $\R^n$ of the vector space spanned by:

\begin{enumerate}
\item if $\nu \in (0,1)$: $\bs{1}$;
\item if $\nu \in (1,2)$: $\bs{1}$ and $\bs{X}_i$ ($i \in [\![1,r]\!]$);
\item if $\nu \in (2,3)$: $\bs{1}$ and $\bs{X}_i$ ($i \in [\![1,r]\!]$) and $\bs{X}_i \circ \bs{X}_j$ ($i,j \in [\![1,r]\!]$).
\end{enumerate}

Clearly, for any $\nu \in (0,1) \cup (1,2) \cup (2,3)$, for any vector $\bs{v} \in k_\nu$,

\begin{align}
\bs{v} \trans \coIrr^{geom} \bs{v} &= \bs{v} \trans \bs{D}^\nu (\bs{\mu}) \bs{v} + \bs{v} \trans \bs{R}_{\bs{\mu}}^{geom} \bs{v} , \\
\bs{v} \trans \coIrr^{tens} \bs{v} &= \sum_{i=1}^r \mu_i^{2\nu} \bs{v} \bs{D}_i^\nu \bs{v} + \bs{v} \trans \bs{R}_{\bs{\mu}}^{tens} \bs{v}.
\end{align}

Since when $\mu \to 0$ $\| \bs{D}^\nu (\bs{\mu}) \| = O( \| \bs{\mu} \|^{2\nu} )$, $\| \bs{R}_{\bs{\mu}}^{geom} \| = o( \| \bs{\mu} \|^{2\nu} )$
and $\| \bs{R}_{\bs{\mu}}^{tens} \| = o( \| \bs{\mu} \|^{2\nu} )$, for any $\bs{\mu} \in (0,+\infty)^r$ such that $\| \bs{\mu} \|$ is small enough, there exists $c>0$ such that for any $\bs{v} \in k_\nu$, 

\begin{align} \label{Eq:norm_corr_restricted_knu}
\max(\bs{v} \trans \coIrr^{geom} \bs{v},\bs{v} \trans \coIrr^{tens} \bs{v})   \leqslant c \| \bs{\mu} \|^{2\nu} \bs{v} \trans \bs{v}.
\end{align}

\begin{prop} \label{Prop:norm_corr_bounded_by_ySigmay}
For a Matérn anisotropic geometric or tensorized correlation kernel with smoothness parameter $\nu \in (0,1) \cup (1,2) \cup (2,3)$, for any vector $\bs{y} \in \R^n$ not orthogonal to $k_\nu$, when $\| \bs{\mu} \| \to 0$,  $\| \bs{\mu} \|^{-2\nu} = O\left(\bs{y} \trans \coIrr^{-1} \bs{y} \right)$.
\end{prop}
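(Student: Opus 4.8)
The plan is to deduce the result directly from the bound \eqref{Eq:norm_corr_restricted_knu}, which already encapsulates all the genuine analytic effort (the small-$\norme$ series expansions of the Matérn correlation matrices). Given that bound, the proposition reduces to a short variational argument built on the positive-definiteness of $\coIrr$. First I would recall that, for a design set with distinct points and a strictly positive-definite Matérn kernel, $\coIrr$ is symmetric positive definite whenever $\norme>0$, hence invertible. The key elementary tool is the Cauchy--Schwarz inequality for the inner product $\langle \bs{u},\bs{v}\rangle := \bs{u}\trans \coIrr \bs{v}$: for any nonzero $\bs{z} \in \R^n$,
$$(\bs{z}\trans \bs{y})^2 = \left( \bs{z}\trans \coIrr \coIrr^{-1} \bs{y}\right)^2 \leqslant \left(\bs{z}\trans \coIrr \bs{z}\right)\left( \bs{y}\trans \coIrr^{-1} \bs{y}\right),$$
which rearranges into the lower bound $\bs{y}\trans \coIrr^{-1}\bs{y} \geqslant (\bs{z}\trans \bs{y})^2 / (\bs{z}\trans \coIrr \bs{z})$ valid for every admissible test vector $\bs{z}$.

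Next I would choose $\bs{z}$ wisely, namely as the orthogonal projection $\bs{z}_0$ of $\bs{y}$ onto $k_\nu$. Since $\bs{y}$ is by hypothesis not orthogonal to $k_\nu$, we have $\bs{z}_0 \neq \bs{0}$ and $\bs{z}_0\trans \bs{y} = \|\bs{z}_0\|^2 > 0$. Because $\bs{z}_0 \in k_\nu$, the bound \eqref{Eq:norm_corr_restricted_knu} applies for both kernel types simultaneously, via the maximum, and yields $\bs{z}_0\trans \coIrr \bs{z}_0 \leqslant c \,\norme^{2\nu}\, \|\bs{z}_0\|^2$ once $\norme$ is small enough. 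Combining the two displays gives
$$\bs{y}\trans \coIrr^{-1} \bs{y} \geqslant \frac{\|\bs{z}_0\|^4}{c\,\norme^{2\nu}\,\|\bs{z}_0\|^2} = \frac{\|\bs{z}_0\|^2}{c}\,\norme^{-2\nu},$$
which is exactly $\norme^{-2\nu} = O\!\left(\bs{y}\trans \coIrr^{-1}\bs{y}\right)$ as $\norme \to 0$, the implicit constant $c/\|\bs{z}_0\|^2$ depending only on $\bs{y}$ and the design set.

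There is essentially no obstacle left at this level. The only subtlety worth stating explicitly is that \eqref{Eq:norm_corr_restricted_knu} controls $\coIrr$ only on the subspace $k_\nu$ and only for $\norme$ small, which is precisely why the test vector $\bs{z}_0$ must be taken inside $k_\nu$, and why the hypothesis that $\bs{y}$ is not orthogonal to $k_\nu$ is indispensable: it guarantees $\bs{z}_0 \neq \bs{0}$, so that the constant $\|\bs{z}_0\|^2/c$ is strictly positive. All the genuine difficulty lies upstream, in establishing that the correlation matrix collapses along $k_\nu$ at the sharp rate $\norme^{2\nu}$; once that collapse is known, the matching blow-up of the quadratic form $\bs{y}\trans\coIrr^{-1}\bs{y}$ follows immediately by duality.
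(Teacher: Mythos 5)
Your proof is correct, and it takes a genuinely different route from the paper's. The paper conjugates $\corr$ (in the $\bs{\mu}$-parametrization, $\coIrr$) by an orthogonal matrix adapted to the splitting $\R^n = k_\nu^{\perp} \oplus k_\nu$, writes the conjugated matrix in $2\times 2$ block form, inverts it via the Schur complement, and drops the positive-definite block to get $\bs{y}\trans \coIrr^{-1}\bs{y} \geqslant \bs{y}_2\trans \bs{C}_{\bs{\mu}}^{-1}\bs{y}_2$, where $\bs{y}_2$ is the component of $\bs{y}$ along $k_\nu$ and $\bs{C}_{\bs{\mu}}$ is the restriction of the quadratic form to $k_\nu$; the estimate \eqref{Eq:norm_corr_restricted_knu} then bounds the eigenvalues of $\bs{C}_{\bs{\mu}}$ from above and hence those of $\bs{C}_{\bs{\mu}}^{-1}$ from below. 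You instead invoke the dual (variational) characterization $\bs{y}\trans\coIrr^{-1}\bs{y} \geqslant (\bs{z}\trans\bs{y})^2/(\bs{z}\trans\coIrr\bs{z})$, which is just Cauchy--Schwarz for the $\coIrr$-inner product, and test it with the orthogonal projection $\bs{z}_0$ of $\bs{y}$ onto $k_\nu$. Since $\bs{z}_0\trans\bs{y} = \|\bs{z}_0\|^2 > 0$ by the non-orthogonality hypothesis and \eqref{Eq:norm_corr_restricted_knu} controls $\bs{z}_0\trans\coIrr\bs{z}_0$, the conclusion follows with the same effective constant ($\|\bs{z}_0\| = \|\bs{y}_2\|$). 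Your argument is shorter and avoids the block-inversion bookkeeping entirely; the paper's decomposition has the minor advantage of exhibiting the full block structure of $\coIrr^{-1}$, which could be reused for finer asymptotics, but for the stated proposition nothing is lost. Both proofs correctly locate all the analytic content upstream in \eqref{Eq:norm_corr_restricted_knu}.
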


\begin{proof}
Let $d_\nu$ be the dimension of $k_\nu$ and let $\bs{O}_\nu$ be an orthogonal $n \times n$ matrix whose first $n-d_\nu$ columns form an orthonormal basis of $k_\nu^{\perp}$ and whose last $d_\nu$ columns form an orthonormal basis of $k_\nu$. \medskip

Then $\coIrr = \bs{O}_\nu \bs{O}_\nu \trans \coIrr \bs{O}_\nu \bs{O}_\nu \trans$. Consider the following decomposition of $\bs{O}_\nu \trans \coIrr \bs{O}_\nu$:

\begin{equation}
\bs{O}_\nu \trans \coIrr \bs{O}_\nu = \begin{pmatrix} \bs{A}_{\bs{\mu}} & \bs{B}_{\bs{\mu}} \\ \bs{B}_{\bs{\mu}} \trans & \bs{C}_{\bs{\mu}} \\ \end{pmatrix}
\end{equation}

where the blocks $\bs{A}_{\bs{\mu}}$, $\bs{B}_{\bs{\mu}}$ and $\bs{C}_{\bs{\mu}}$ are respectively $(n-d_\nu) \times (n-d_\mu)$, $(n-d_\mu) \times d_\mu$ and $d_\mu \times d_\mu$ matrices. Note that $ \bs{A}_{\bs{\mu}} $ and $ \bs{C}_{\bs{\mu}} $ represent the restriction of the scalar product defined by $\coIrr$ to $k_\nu^{\perp}$ and $k_\nu$ respectively. When $\| \bs{\mu} \|$ is small enough, defining $c>0$ as in Equation \eqref{Eq:norm_corr_restricted_knu}, $\| \bs{C}_{\bs{\mu}} \| \leqslant c \| \bs{\mu} \|^{2\nu} $.

\begin{equation}
\bs{O}_\nu \trans \coIrr^{-1} \bs{O}_\nu = 
 \begin{pmatrix} \bs{I}_{n-d_\nu} & \bs{0} \\ - \bs{B}_{\bs{\mu}} \bs{C}_{\bs{\mu}}^{-1} & \bs{I}_{d_\nu} \end{pmatrix}
 \begin{pmatrix} \left(\bs{A} - \bs{B}_{\bs{\mu}} \bs{C}_{\bs{\mu}}^{-1} \bs{B}_{\bs{\mu}} \trans \right)^{-1} & \bs{0} \\ \bs{0} & \bs{C}_{\bs{\mu}}^{-1} \end{pmatrix}
 \begin{pmatrix} \bs{I}_{n-d_\nu} &  - \bs{C}_{\bs{\mu}}^{-1} \bs{B}_{\bs{\mu}} \trans \\ \bs{0} & \bs{I}_{d_\nu} \end{pmatrix}.
\end{equation}

For any vector $\bs{y} \in \R^n$, there exist $\bs{y}_1 \in \R^{n-d_\nu}$ and $\bs{y}_2 \in \R^{d_\nu}$ such that 

\begin{equation}
\bs{O}_\nu \trans \bs{y} = \begin{pmatrix} \bs{y}_1 \\ \bs{y}_2 \end{pmatrix},
\end{equation}

\begin{equation}
\begin{split}
\bs{y} \trans \coIrr^{-1} \bs{y} &= 
\begin{pmatrix} \bs{y}_1 - \bs{C}_{\bs{\mu}}^{-1} \bs{B}_{\bs{\mu}} \bs{y}_2 \\ \bs{y}_2 \end{pmatrix} \trans 
\begin{pmatrix} \left(\bs{A} - \bs{B}_{\bs{\mu}} \bs{C}_{\bs{\mu}}^{-1} \bs{B}_{\bs{\mu}} \trans \right)^{-1} & \bs{0} \\ \bs{0} & \bs{C}_{\bs{\mu}}^{-1} \end{pmatrix}
\begin{pmatrix} \bs{y}_1 - \bs{C}_{\bs{\mu}}^{-1} \bs{B}_{\bs{\mu}} \bs{y}_2 \\ \bs{y}_2 \end{pmatrix} .
\end{split}
\end{equation}

Given $\coIrr^{-1}$ is positive definite, the diagonal block $\left(\bs{A} - \bs{B}_{\bs{\mu}} \bs{C}_{\bs{\mu}}^{-1} \bs{B}_{\bs{\mu}} \trans \right)^{-1}$ is positive definite too. This implies
$
\bs{y} \trans \coIrr^{-1} \bs{y} \geqslant \bs{y}_2 \trans \bs{C}_{\bs{\mu}}^{-1} \bs{y}_2.
$
When $\| \bs{\mu} \|$ is small enough, $\bs{y}_2 \trans \bs{C}_{\bs{\mu}}^{-1} \bs{y}_2 \geqslant c^{-1} \| \bs{\mu} \|^{-2\nu} \| \bs{y}_2 \|^2$. \medskip

If $\bs{y}$ is not orthogonal to $k_\nu$, then $\| \bs{y}_2 \| \neq 0$ and thus $\| \bs{\mu} \|^{-2\nu} = O(\bs{y} \trans \coIrr^{-1} \bs{y})$.

\end{proof}

\begin{prop} \label{Prop:majoration_posterior_petitmu_grandnu}
Assume $\nu \in (1,2) \cup (2,3)$. For every $\bs{y} \in \R^n$ that is not orthogonal to the vector subspace
$k_\nu$,
$L(\bs{y} | \bs{\mu})f_i(\mu_i | \manqueI)$ is a bounded function of $\bs{\mu}$.
\end{prop}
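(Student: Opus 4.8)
The plan is to set $g(\bs{\mu}) := L(\bs{y}|\bs{\mu})\,f_i(\mu_i|\manqueI)$, to use the factorisation $L(\bs{y}|\bs{\mu}) \propto |\coIrr|^{-1/2}\,(\bs{y}\trans\coIrr^{-1}\bs{y})^{-n/2}$, and to bound the three factors separately as $\norme \to 0$, then to verify by power counting that their product stays bounded. First I would reduce the global claim to the regime $\norme\to 0$: on $(0,+\infty)^r$ the map $g$ is continuous (Proposition \ref{Prop:densite_continue} together with continuity of $L$), and as $\bs{\mu}$ approaches any boundary point of $[0,+\infty]^r$ other than the origin, coordinate-distinctness forces $\coIrr$ either to remain positive definite (if the surviving coordinates stay bounded) or to converge to $\bs{I}_n$ (as soon as one coordinate tends to $+\infty$), so that $L$ and $f_i$ both stay bounded. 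Hence the only delicate limit is $\bs{\mu}\to\bs{0}$, and it suffices to bound $g$ on a punctured neighbourhood of the origin.

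On that neighbourhood I would collect the following rates. Since $\nu>1$, Lemma \ref{Lem:derivee_matern_tens} (resp. Lemma \ref{Lem:derivee_matern_anisgeom}) gives $\|\deIrivee\| = O(\norme)$ as $\norme\to 0$, because $K_{1,\nu-1}$ is finite at $0$; combined with $\|\coIrr^{-1}\| = O(\norme^{-2\nu})$ from Proposition \ref{Prop:vitesse_convergence_matern_anisotrope} and the elementary bound $f_i \leqslant \|\deIrivee\|\,\|\coIrr^{-1}\|$ (which follows from $[\bs{\mathcal{I}}(\bs{\mu})]_{ii} \leqslant \Tr[(\deIrivee\,\coIrr^{-1})^2]$), this yields $f_i = O(\norme^{1-2\nu})$. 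Proposition \ref{Prop:norm_corr_bounded_by_ySigmay} gives $(\bs{y}\trans\coIrr^{-1}\bs{y})^{-n/2} = O(\norme^{n\nu})$, precisely because $\bs{y}$ is assumed not orthogonal to $k_\nu$. It remains to control $|\coIrr|^{-1/2}$, for which I would establish $|\coIrr| \asymp \norme^{2E}$ with $E = r + \nu(n-1-r)$ when $\nu\in(1,2)$, and with $E = r + r(r+1) + \nu\,d_\nu$, $d_\nu = n-1-r-r(r+1)/2$, when $\nu\in(2,3)$.

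With the three rates in hand the conclusion is arithmetic: the exponent of $\norme$ in $g$ near the origin is $(1-2\nu) + n\nu - E$, which for $\nu\in(1,2)$ simplifies to $(r-1)(\nu-1)\geqslant 0$, and for $\nu\in(2,3)$ to $-r^2-2r+1+\nu(r^2+3r-2)/2$, which is strictly positive since $\nu>2$ while the threshold $2(r^2+2r-1)/(r^2+3r-2)$ never exceeds $2$. In either case $g(\bs{\mu}) = O(\norme^{\alpha})$ with $\alpha\geqslant 0$, so $g$ stays bounded near $\bs{0}$, which completes the argument.

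The main obstacle is the determinant estimate $|\coIrr|\asymp\norme^{2E}$, since Proposition \ref{Prop:vitesse_convergence_matern_anisotrope} controls only the single smallest eigenvalue. The idea is to read the eigenvalue orders off the series expansions \eqref{Eq:anis_geom_DL} and \eqref{Eq:tens_DL}, splitting $\R^n$ into $\mathrm{span}(\bs{1})$ (order $1$), the $\bs{X}_i$-directions (order $\norme^2$), for $\nu\in(2,3)$ the $\bs{X}_i\circ\bs{X}_j$-directions (order $\norme^4$), and $k_\nu$ (order $\norme^{2\nu}$), and to exploit the block/Schur decomposition of $\bs{O}_\nu\trans\coIrr\bs{O}_\nu$ already used in the proof of Proposition \ref{Prop:norm_corr_bounded_by_ySigmay}. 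The genuinely technical point is the matching lower bounds on each block's determinant, which require the leading coefficient matrices $\bs{D}^k$, restricted to the relevant quotient subspaces, to be nonsingular; this is exactly where coordinate-distinctness of the design and the dimension hypotheses (c)(ii)--(iii) enter, guaranteeing both $\dim k_\nu>0$ and the linear independence of $\bs{1}$, the $\bs{X}_i$ and the $\bs{X}_i\circ\bs{X}_j$.
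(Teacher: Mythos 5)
Your high-level factorization is in fact the same as the paper's: the paper writes $L(\bs{y}|\bs{\mu})^2 \propto \prod_{k=1}^n v_k(\bs{\mu})^{-1}\left(\bs{y}\trans\coIrr^{-1}\bs{y}\right)^{-1}$ with $v_1\geqslant\dots\geqslant v_n$ the eigenvalues of $\coIrr$, which is exactly your $|\coIrr|^{-1/2}(\bs{y}\trans\coIrr^{-1}\bs{y})^{-n/2}$, and your bounds on $f_i$ and on $(\bs{y}\trans\coIrr^{-1}\bs{y})^{-n/2}$ coincide with the paper's. The step that fails is the determinant estimate $|\coIrr|\asymp\norme^{2E}$. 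The limit $\norme\to 0$ is anisotropic: the components $\mu_j$ may tend to $0$ at arbitrarily different rates, and the eigenvalue attached to the $\bs{X}_j$-direction is governed by $\mu_j^2$ (through the term $\mu_j^2\,\bs{X}_j\bs{X}_j\trans$ in $\bs{D}^1(\bs{\mu})$), not by $\norme^2$: take $\mu_j=\norme^{100}$ for some $j\neq i$ and that eigenvalue drops far below $\norme^2$ (it is only protected by the global floor $c\norme^{2\nu}$ of Lemma \ref{lemMajor6Iso}). Each such degenerate direction can therefore lower the exponent of $|\coIrr|$ by up to $2(\nu-1)$, i.e.\ raise that of $|\coIrr|^{-1/2}$ by $\nu-1$; with $r-1$ of them your margin $(r-1)(\nu-1)$ is consumed exactly, so your ``arithmetic'' step does not close as written. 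A second, independent problem: coordinate-distinctness does \emph{not} imply linear independence of $\bs{1}$, the $\bs{X}_i$ and the $\bs{X}_i\circ\bs{X}_j$ (two points $(0,0)$ and $(1,1)$ are coordinate-distinct yet give $\bs{X}_1=\bs{X}_2$), so the dimensions entering your exponent $E$ are not pinned down by the stated hypotheses.

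The paper sidesteps both issues by using only spectral bounds that are uniform in the direction of $\bs{\mu}$: $v_1(\bs{\mu})\to n$; $v_k(\bs{\mu})\geqslant c\norme^{2\nu}$ for every $k$; and $v_2(\bs{\mu})\geqslant c_2\norme^2$, obtained by testing the quadratic form on a unit vector built from $\bs{X}_{i(\bs{\mu})}$, where $i(\bs{\mu})$ indexes the \emph{largest} component of $\bs{\mu}$ (so that $\mu_{i(\bs{\mu})}^2\geqslant\norme^2/r$) — one well-chosen direction suffices. This gives $L(\bs{y}|\bs{\mu})=O(\norme^{\nu})\,O(\norme^{\nu-1})\,O(1)^{n-2}=O(\norme^{2\nu-1})$, which exactly cancels $f_i=O(\norme^{1-2\nu})$. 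To salvage your route, replace the isotropic eigenvalue asymptotics by these three lower bounds (one eigenvalue of order $1$, one $\geqslant c\norme^2$, all the rest $\geqslant c\norme^{2\nu}$); the power count then closes at exactly $O(1)$ and you recover the paper's computation.
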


\begin{proof}
Let $v_1(\bs{\mu}) \geqslant v_2(\bs{\mu}) \geqslant ... \geqslant v_n(\bs{\mu})$ be the ordered eigenvalues of $\coIrr$. We can now rewrite $L(\bs{y} | \bs{\mu})$ as

\begin{equation}
L(\bs{y} | \bs{\mu})^2 \propto \prod_{k=1}^n \left[ v_k(\bs{\mu})^{-1} \left( \bs{y} \trans \coIrr^{-1} \bs{y} \right)^{-1} \right] .
\end{equation}

Proposition \ref{Prop:norm_corr_bounded_by_ySigmay} asserts that for any $\bs{y} \in \R^n$ that is not orthogonal to $k_\nu$, $ \left( \bs{y} \trans \coIrr^{-1} \bs{y} \right)^{-1} = O(\norme^{2\nu})$ for $\norme \to 0$. 
Besides, Proposition \ref{Prop:vitesse_convergence_matern_anisotrope} asserts that $ \left\| \coIrr^{-1} \right\| = O( \norme^{-2\nu} )$, so $ \left( \bs{y} \trans \coIrr^{-1} \bs{y} \right)^{-1} = O \left( \left\| \coIrr^{-1} \right\|^{-1} \right)$. This implies that for every integer $i \in [\![1,r]\!]$, $v_k(\bs{\mu})^{-1} \left( \bs{y} \trans \coIrr^{-1} \bs{y} \right)^{-1} = O(1)$. \medskip

Clearly, $\lim_{\|\bs{\mu}\| \to 0} | \bs{1} \trans \bs{v}_1(\bs{\mu}) | = \| \bs{1} \|$ and $\lim_{\|\bs{\mu}\| \to 0} v_1(\bs{\mu}) = n$. The latter implies $\lim_{\|\bs{\mu}\| \to 0} v_1(\bs{\mu})^{-1} = n^{-1}$ and $v_1(\bs{\mu})^{-1} \left( \bs{y} \trans \coIrr^{-1} \bs{y} \right)^{-1} = O \left(\|\bs{\mu}\|^{2\nu} \right)$. Now, for every $\bs{\mu} \in (0,+\infty)^r$, we may (thanks to the axiom of choice) choose a unit eigenvector $\bs{v}_1(\bs{\mu})$ corresponding to the largest eigenvalue $v_1(\bs{\mu})$ and $\bs{v}_2(\bs{\mu})$ corresponding to the second largest eigenvalue $v_2(\bs{\mu})$. Because $\coIrr$ is symmetric, $\bs{v}_1(\bs{\mu}) \trans \bs{v}_2(\bs{\mu}) = 0$ for all $\bs{\mu} \in (0,+\infty)^r$, so $\lim_{\|\bs{\mu}\| \to 0} \bs{1} \trans \bs{v}_2(\bs{\mu}) = 0$. 

\begin{align}
v_2(\bs{\mu}) &= ( \bs{1} \trans \bs{v}_2(\bs{\mu}) )^2 + 2\nu (\nu-1)^{-1} \sum_{i=1}^r \mu_i^2 \left(\bs{X}_i \trans \bs{v}_2(\bs{\mu}) \right)^2 - 2 \mu_i^2 \left(\bs{1} \trans \bs{v}_2(\bs{\mu}) \right) \left(\bs{X}_i^{\circ 2 \top} \bs{v}_2(\bs{\mu}) \right)+ O \left(\|\bs{\mu}\|^4 \right) \nonumber \\
&\geqslant 2\nu (\nu-1)^{-1} \sum_{i=1}^r \mu_i^2 (\bs{X}_i \trans \bs{v}_2(\bs{\mu}))^2 + o \left(\|\bs{\mu}\|^2 \right).
\end{align}

For all $\bs{\mu} \in (0,+\infty)^r$, let $i(\bs{\mu})$ be the smallest integer $i \in [\![1,r]\!]$ such that $\mu_i = \max_{j=1}^r \mu_j$. Now for every integer $i \in [\![1,r]\!]$ let $\bs{w}_i(\bs{\mu})$ be the unit vector that belongs to the space spanned by $\bs{v}_1(\bs{\mu})$ and $\bs{X}_i$ that verifies $\bs{v}_1(\bs{\mu}) \trans \bs{w}_i(\bs{\mu})=0$ and $\bs{X}_i \trans \bs{w}_i(\bs{\mu}) > 0$.

\begin{equation}
\bs{w}_{i(\bs{\mu})}(\bs{\mu}) \coIrr \bs{w}_{i(\bs{\mu})}(\bs{\mu}) \geqslant 2 \nu (\nu-1)^{-1} r^{-1} \norme^2 (\bs{X}_{i(\bs{\mu})} \trans \bs{w}_{i(\bs{\mu})}(\bs{\mu}))^2 +  o \left(\|\bs{\mu}\|^2 \right).
\end{equation}

Because $\lim_{\|\bs{\mu}\| \to 0} | \bs{1} \trans \bs{v}_1(\bs{\mu}) | = \| \bs{1} \|$, $\liminf_{\norme \to 0} \bs{X}_{i(\bs{\mu})} \trans \bs{w}_{i(\bs{\mu})}(\bs{\mu}) \geqslant \min_{i=1}^r \lim_{\norme \to 0} \bs{X}_i \trans \bs{w}_i(\bs{\mu}) > 0$, so there exists a constant $c_2>0$ such that when $\norme$ is small enough

\begin{equation}
\bs{w}_{i(\bs{\mu})}(\bs{\mu}) \coIrr \bs{w}_{i(\bs{\mu})}(\bs{\mu}) \geqslant c_2 \norme^2.
\end{equation}

Recall $v_2(\bs{\mu}) = \max \{ \bs{\xi} \trans \coIrr \bs{\xi} | \bs{\xi} \in S^{n-1} \; \mathrm{and} \; \bs{\xi} \trans \bs{v}_1(\bs{\mu}) \}$, so a fortiori $v_2(\bs{\mu}) \geqslant c_2 \norme^2$. \medskip

This implies $v_2(\bs{\mu})^{-1} = O(\|\bs{\mu} \|^{-2})$ and therefore $v_2(\bs{\mu})^{-1} \left( \bs{y} \trans \coIrr^{-1} \bs{y} \right)^{-1}= O(\|\bs{\mu} \|^{2(\nu-1)})$ . \medskip

Finally, $L(\bs{y} | \bs{\mu}) = O \left( \| \bs{\mu} \|^{\nu} \right) O \left( \| \bs{\mu} \|^{\nu-1} \right) = O \left( \| \bs{\mu} \|^{2\nu-1} \right)$.
Given that $f_i(\mu_i | \manqueI) = O(\norme^{1 - 2\nu})$, the product $L(\bs{y} | \bs{\mu})f_i(\mu_i | \manqueI)$ is bounded when $\norme \to 0$.
\end{proof}

\begin{prop} \label{Prop:majoration_posterior_petitmu_petitnu}
Assume $\nu \in (0,1)$. For every $\bs{y} \in \R^n$ that is not collinear to $\bs{1}$, 
when $\norme \to 0$,
$L(\bs{y} | \bs{\mu})f_i(\mu_i | \manqueI) = O(\mu_i^{-1+\nu})$.
\end{prop}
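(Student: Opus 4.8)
The plan is to bound the two factors $L(\bs{y}\mid\bs{\mu})$ and $f_i(\mu_i\mid\manqueI)$ separately as $\norme\to 0$ and then combine them, exploiting $\mu_i\leqslant\norme$. Write $v_1\geqslant\cdots\geqslant v_n>0$ for the eigenvalues of $\coIrr$ and recall $L(\bs{y}\mid\bs{\mu})\propto|\coIrr|^{-1/2}(\bs{y}\trans\coIrr^{-1}\bs{y})^{-n/2}$, so $L(\bs{y}\mid\bs{\mu})^2\propto|\coIrr|^{-1}(\bs{y}\trans\coIrr^{-1}\bs{y})^{-n}$. First I would show $L(\bs{y}\mid\bs{\mu})=O(\norme^{\nu})$. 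Since $\nu\in(0,1)$, the space $k_\nu$ is the orthogonal complement of $\bs{1}$, so ``$\bs{y}$ not collinear to $\bs{1}$'' is exactly ``$\bs{y}$ not orthogonal to $k_\nu$'', and Proposition \ref{Prop:norm_corr_bounded_by_ySigmay} gives $(\bs{y}\trans\coIrr^{-1}\bs{y})^{-1}=O(\norme^{2\nu})$. For the determinant I would bound the eigenvalues from below in two different ways: on the one hand $v_1\geqslant\bs{1}\trans\coIrr\bs{1}/n\to n$ (every entry of $\coIrr$ tends to $1$), so $v_1\geqslant n/2$ for $\norme$ small; on the other hand Proposition \ref{Prop:vitesse_convergence_matern_anisotrope} gives $\|\coIrr^{-1}\|=O(\norme^{-2\nu})$, i.e. $v_n\geqslant c'\norme^{2\nu}$, whence $v_k\geqslant c'\norme^{2\nu}$ for every $k$. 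Multiplying the single bound $v_1\geqslant n/2$ with the $n-1$ bounds $v_k\geqslant c'\norme^{2\nu}$ yields $|\coIrr|\geqslant(n/2)(c'\norme^{2\nu})^{n-1}$, so $|\coIrr|^{-1}=O(\norme^{-2\nu(n-1)})$ and
\begin{equation}
L(\bs{y}\mid\bs{\mu})^2=O\!\left(\norme^{-2\nu(n-1)}\cdot\norme^{2\nu n}\right)=O(\norme^{2\nu}),
\end{equation}
hence $L(\bs{y}\mid\bs{\mu})=O(\norme^{\nu})$.

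Next I would bound $f_i$. Dropping the nonnegative subtracted term in its definition and using Cauchy--Schwarz,
\begin{equation}
f_i(\mu_i\mid\manqueI)^2\leqslant\Tr\!\left[\left(\deIrivee\,\coIrr^{-1}\right)^2\right]\leqslant\left\|\deIrivee\,\coIrr^{-1}\right\|^2\leqslant\left\|\deIrivee\right\|^2\,\|\coIrr^{-1}\|^2.
\end{equation}
The case $\nu<1$ of Lemma \ref{Lem:derivee_matern_tens} shows that, as $\norme\to 0$, every off-diagonal entry of $\deIrivee$ is equivalent to a constant times $\mu_i^{2\nu-1}$ (the Matérn and Bessel factors all tend to $1$) while its diagonal vanishes, so $\|\deIrivee\|=O(\mu_i^{2\nu-1})$. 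Together with $\|\coIrr^{-1}\|=O(\norme^{-2\nu})$ this gives $f_i(\mu_i\mid\manqueI)=O(\mu_i^{2\nu-1}\norme^{-2\nu})$.

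Finally, combining the two estimates, $L(\bs{y}\mid\bs{\mu})f_i(\mu_i\mid\manqueI)=O(\mu_i^{2\nu-1}\norme^{-\nu})$; since $\mu_i\leqslant\norme$ and $-\nu<0$ we have $\norme^{-\nu}\leqslant\mu_i^{-\nu}$, so the product is $O(\mu_i^{\nu-1})=O(\mu_i^{-1+\nu})$, as claimed. I expect the sharp estimate $L=O(\norme^{\nu})$ to be the crux of the argument: one must isolate the single eigenvalue $v_1\to n$ from the $n-1$ eigenvalues of order $\norme^{2\nu}$. Bounding all $n$ eigenvalues below merely by $c'\norme^{2\nu}$ would only give $|\coIrr|^{-1}=O(\norme^{-2\nu n})$ and hence $L=O(1)$, degrading the final bound to the useless $O(\mu_i^{-1})$; this is also precisely where the hypothesis $n>1$ enters.
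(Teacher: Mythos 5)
Your proof is correct and follows essentially the same route as the paper's: it establishes $L(\bs{y}\mid\bs{\mu})=O(\norme^{\nu})$ by isolating the eigenvalue $v_1\to n$ and invoking Propositions \ref{Prop:norm_corr_bounded_by_ySigmay} and \ref{Prop:vitesse_convergence_matern_anisotrope}, then bounds $f_i\leqslant\|\deIrivee\|\,\|\coIrr^{-1}\|=O(\mu_i^{2\nu-1}\norme^{-2\nu})$ and combines the two using $\mu_i\leqslant\norme$. The only cosmetic difference is that you read the bound $\|\deIrivee\|=O(\mu_i^{2\nu-1})$ off the explicit tensorized formula of Lemma \ref{Lem:derivee_matern_tens} rather than citing Lemma \ref{Lem:Matern_respecte_hyp}(b), which covers both kernel families at once.
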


\begin{proof}
This proof is similar to the previous one, so we use the same notations. $v_1(\bs{\mu})^{-1} = O(1)$, so $ v_1(\bs{\mu})^{-1} \left( \bs{y} \trans \coIrr^{-1} \bs{y} \right)^{-1} = O(\norme^{2\nu})$, which yields that 
$L(\bs{y} | \bs{\mu}) = O(\norme^{\nu})$. 

This implies that $L(\bs{y} | \bs{\mu}) \left\| \coIrr^{-1} \right\| = O(\norme^{-\nu}) = O(\mu_i^{-\nu})$.

By Lemma \ref{Lem:Matern_respecte_hyp},
$\left\| \deIrivee \right\| = O( \mu_i^{-1+2\nu} )$. 
Putting all this together, $L(\bs{y} | \bs{\mu})f_i(\mu_i | \manqueI) = O(\mu_i^{-1+\nu})$.
\end{proof}

\begin{prop} \label{Prop:densite_continue_0}

For Matérn anisotropic geometric or tensorized kernels with smoothness parameter $\nu \in (0,1) \cup (1,2) \cup (2,3)$, if $\bs{y} \in \R^n$ is not orthogonal to $k_\nu$, then
the conditional posterior distribution $\pi_i(\mu_i | \bs{y}, \bs{\mu}_{-i})$, seen as a function of $\bs{\mu}$, is continuous over $\{ \bs{\mu} \in [0,+\infty)^r : \mu_i \neq 0 \}$.

Moreover,
$$
\forall \mu_i > 0, \quad
\pi_i(\mu_i | \bs{y}, \bs{\mu}_{-i} = \bs{0}_{r-1}) = \frac{L(\bs{y} | \mu_i, \bs{\mu}_{-i} = \bs{0}_{r-1}) 
f_i(\mu_i | \bs{\mu}_{-i} = \bs{0}_{r-1})}
{\int_0^\infty L(\bs{y} | \mu_i=t, \bs{\mu}_{-i} = \bs{0}_{r-1}) f_i(\mu_i=t | \bs{\mu}_{-i} = \bs{0}_{r-1} ) dt } 
> 0.
$$
\end{prop}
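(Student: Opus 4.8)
The plan is to write the posterior conditional density as a normalised ratio and to establish separately the regularity of its numerator and of its normalising constant. Set $N(\bs{\mu}) := L(\bs{y}\mid\bs{\mu})\, f_i(\mu_i\mid\manqueI)$ for the unnormalised numerator and $g(\manqueI) := \int_0^\infty N(\mu_i=t,\manqueI)\, dt$ for the normalising constant, so that $\pi_i(\mu_i\mid\bs{y},\manqueI) = N(\bs{\mu})/g(\manqueI)$. I would prove (a) that $N$ is continuous and strictly positive on $\{\bs{\mu}\in[0,+\infty)^r : \mu_i\neq 0\}$, and (b) that $g$ is finite, strictly positive, and continuous in $\manqueI$ over $[0,+\infty)^{r-1}$. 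Continuity of the ratio over $\{\mu_i\neq 0\}$ then follows at once, the denominator never vanishing, and the displayed formula at $\manqueI=\bs{0}_{r-1}$ together with its positivity is simply the definition evaluated there.

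For (a), the essential observation is that $\coIrr$ stays positive definite for every $\bs{\mu}$ with $\mu_i\neq 0$, no matter how many components of $\manqueI$ vanish: since the design set is coordinate-distinct the scalars $x_i^{(k)}$ are pairwise distinct, so the correlation matrix built from the $i$-th coordinate alone is already a strictly positive definite one-dimensional Matérn matrix, and activating the remaining coordinates (those with $\mu_j>0$) preserves positive definiteness. Hence $\coIrr^{-1}$ and $L(\bs{y}\mid\bs{\mu})>0$ are continuous, and through the explicit Bessel-function expressions of Lemmas \ref{Lem:derivee_matern_tens} and \ref{Lem:derivee_matern_anisgeom} so are $\deIrivee$ and $f_i$. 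Strict positivity of $f_i$ follows from reading the Fisher entry $[\bs{\mathcal{I}}(\bs{\mu})]_{ii} = \Tr[(\deIrivee\,\coIrr^{-1})^2] - \tfrac1n\Tr[\deIrivee\,\coIrr^{-1}]^2$ as the scaled variance of the (real) eigenvalues of the symmetric matrix $\coIrr^{-1/2}\,\deIrivee\,\coIrr^{-1/2}$: it vanishes only when $\deIrivee$ is proportional to $\coIrr$, which is impossible because $\deIrivee$ has a null diagonal while $\coIrr$ does not, and $\deIrivee\neq\bs{0}$ since its off-diagonal entries do not vanish on a coordinate-distinct set.

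Part (b) is the crux, and I would obtain the continuity of $g$ by dominated convergence, exhibiting an integrable function that bounds $N(t,\manqueI)$ uniformly for $\manqueI$ in a small neighbourhood of the target point (pointwise convergence of the integrand being supplied by part (a)). The dominating function is assembled by splitting $(0,+\infty)$ into three regimes. On a compact middle interval $[\varepsilon,T]$ the integrand is continuous on a compact set, hence bounded. For large $t$ one has $\norme\to\infty$ and $\coIrr\to\bs{I}_n$ (Lemma \ref{Lem:bien_eleve}), so $L$ stays bounded (here $\bs{y}\neq\bs{0}$, which holds since $\bs{y}$ is not orthogonal to $k_\nu$), while $f_i\sim\|\deIrivee\|\leqslant M_{i,1}\,t^{-c}$ with $c>1$ by Lemmas \ref{Lem:dens_equivalente} and \ref{Lem:Matern_respecte_hyp}, giving an integrable tail. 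The main obstacle is the regime $t\to 0$ at the boundary target $\widetilde{\manqueI}=\bs{0}_{r-1}$: there $\norme\to 0$, and integrability near $0$ rests precisely on the small-$\norme$ estimates developed for this purpose, namely Proposition \ref{Prop:majoration_posterior_petitmu_grandnu} (giving $N$ bounded when $\nu\in(1,2)\cup(2,3)$) and Proposition \ref{Prop:majoration_posterior_petitmu_petitnu} (giving $N=O(\mu_i^{-1+\nu})$, integrable at $0$ since $\nu>0$, when $\nu\in(0,1)$). This is exactly where the hypothesis that $\bs{y}$ is not orthogonal to $k_\nu$ enters. Choosing $\varepsilon$ and the neighbourhood radius small enough that $t<\varepsilon$ together with $\manqueI$ near $\bs{0}_{r-1}$ forces $\norme$ below the threshold of these propositions makes the bound uniform in $\manqueI$.

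Finally, for a target $\widetilde{\manqueI}\neq\bs{0}_{r-1}$ the small-$t$ regime keeps $\norme$ bounded away from $0$, so the argument reduces to a likelihood-weighted version of Proposition \ref{Prop:densite_continue}; in every case $g$ is finite, and it is strictly positive because $N>0$. Dividing the continuous positive numerator by the continuous positive denominator yields the asserted continuity over $\{\bs{\mu}\in[0,+\infty)^r:\mu_i\neq 0\}$, and evaluating at $\manqueI=\bs{0}_{r-1}$ reproduces the displayed expression, which is therefore positive.
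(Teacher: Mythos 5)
Your proposal is correct and follows essentially the same route as the paper: continuity of the unnormalised density plus dominated convergence for the normalising integral, with the tail at infinity controlled via Lemmas \ref{Lem:Matern_respecte_hyp} and \ref{Lem:bien_eleve} and the behaviour near $\bs{0}$ controlled via Propositions \ref{Prop:majoration_posterior_petitmu_grandnu} and \ref{Prop:majoration_posterior_petitmu_petitnu}, which is exactly where the hypothesis that $\bs{y}$ is not orthogonal to $k_\nu$ is used. Your additional justifications (positive definiteness of $\corr$ when only some components of $\manqueI$ vanish, and strict positivity of $f_i$ via the eigenvalue-variance reading of the Fisher entry) are welcome details that the paper leaves implicit, but they do not change the argument.
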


\begin{proof}
Given Proposition \ref{Prop:densite_continue} and the fact that $\forall \bs{y} \in \R^n$, $\forall i \in [\![1,r]\!]$ and $\forall \mu_i \in (0,+\infty)$,
as $ \normeI \to 0$, $L(\bs{y} | \bs{\mu}) f_i(\mu_i | \bs{\mu}_{-i})$ converges pointwise to 
$L(\bs{y} | \mu_i, \bs{\mu}_{-1} = \bs{0}_{r-1}) f_i(\mu_i | \bs{\mu}_{-i} = \bs{0}_{r-1} )$, we only need to show that 

\begin{equation} \label{Eq:integrale_limite}
\int_0^\infty L(\bs{y} | \mu_i=t, \bs{\mu}_{-i}) f_i(\mu_i=t | \bs{\mu}_{-i}) dt \underset{\normeI \to 0}{\longrightarrow}
\int_0^\infty L(\bs{y} | \mu_i=t, \bs{\mu}_{-i} = \bs{0}_{r-1}) f_i(\mu_i=t | \bs{\mu}_{-i} = \bs{0}_{r-1} ) dt
< + \infty.
\end{equation}

Lemma \ref{Lem:Matern_respecte_hyp} implies that there exists $M_i>0$ such that 

\begin{equation}
\begin{split}
L(\bs{y} | \bs{\mu}) f_i(\mu_i | \bs{\mu}_{-i})
&= L(\bs{y} | \bs{\mu}) \left\| \coIrr^{-1} \right\| \left\| \coIrr^{-1} \right\|^{-1} f_i(\mu_i | \bs{\mu}_{-i}) \\
&\leqslant L(\bs{y} | \bs{\mu}) \left\| \coIrr^{-1} \right\| M_i \mu_i^{-2}.
\end{split}
\end{equation}

Lemma \ref{Lem:bien_eleve} then ensures $\left\| \coIrr^{-1} - \bs{I}_n \right\| \to 0$ as $\mu_i \to +\infty$, so the right member of the inequality is integrable in the neighborhood of $+\infty$. Let us now focus on the neighborhood of 0. \medskip

If $\nu>1$, Proposition \ref{Prop:majoration_posterior_petitmu_grandnu} asserts that $L(\bs{y} | \bs{\mu}) f_i(\mu_i | \bs{\mu}_{-i})$ is bounded in the neighborhood of 0.

If $\nu<1$, Proposition \ref{Prop:majoration_posterior_petitmu_petitnu} asserts that $L(\bs{y} | \bs{\mu}) f_i(\mu_i | \bs{\mu}_{-i}) \mu_i^{1-\nu}$ is bounded in the neighborhood of 0. \medskip

Therefore, there exists a function independent of $\manqueI$ that is both greater than $L(\bs{y} | \bs{\mu}) f_i(\mu_i | \bs{\mu}_{-i})$ and integrable over $\mu_i \in (0,+\infty)$, so the dominated convergence theorem is applicable.

\end{proof}

\subsection{Lower bound for conditional reference posterior densities}

The following Lemma provides the key to proving Theorem \ref{Thm:existence_posterior_gibbs}:

\begin{lem} \label{Lem:implique_theoreme}
In a Simple Kriging model with the characteristics described above, there exists a hyperplane $\mathcal{H}$ of $\R^n$ such that, for any $\bs{y} \in \R^n \setminus \mathcal{H}$ and any $i \in [\![1,r]\!]$,
there exists a measurable function $m_{i,\bs{y}}:(0,+\infty) \rightarrow (0,+\infty)$ such that, for all $\bs{\mu}_{-i} \in (0,+\infty)^{r-1}$, the conditional reference posterior density verifies:

\begin{equation}
\pi_i(\mu_i | \bs{y}, \bs{\mu}_{-i}) \geqslant m_{i,\bs{y}} (\mu_i) > 0.
\end{equation}

\end{lem}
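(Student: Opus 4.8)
The plan is to obtain the uniform lower bound by gluing together the two asymptotic regimes already analysed separately --- the low-correlation regime $\normeI \to \infty$ and the high-correlation regime $\normeI \to 0$ --- and to handle the intermediate range by a compactness argument. First I would pin down the exceptional set. Conditions (i)--(iii) on $n$ and $r$ guarantee that $\dim k_\nu \geqslant 1$: indeed the spanning family defining $k_\nu^\perp$ has at most $1$, $r+1$, respectively $r(r+1)/2 + r + 1$ elements in the three cases, and the stated bounds on $n$ make $n$ strictly exceed these, so $k_\nu^\perp$ is a proper subspace and is contained in some hyperplane $\mathcal{H}$. Taking this $\mathcal{H}$, any $\bs{y} \in \R^n \setminus \mathcal{H}$ fails to lie in $k_\nu^\perp$, hence is not orthogonal to $k_\nu$ and a fortiori satisfies $\bs{y} \neq \bs{0}$; thus both Proposition \ref{Prop:minoration_proba} and Proposition \ref{Prop:densite_continue_0} apply.

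Now fix $i \in [\![1,r]\!]$ and $\mu_i \in (0,+\infty)$, choose any $\delta > 0$, and split the range of $\manqueI$ into $\{\normeI \geqslant \delta\}$ and $\{\normeI \leqslant \delta\}$. On the first region Proposition \ref{Prop:minoration_proba} directly supplies a constant $b_{i,\delta,\bs{y}}(\mu_i) > 0$ with $\pi_i(\mu_i \,|\, \bs{y}, \manqueI) \geqslant b_{i,\delta,\bs{y}}(\mu_i)$ whenever $\normeI \geqslant \delta$.

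On the second region I would argue by compactness. The set $K := \{ \manqueI \in [0,+\infty)^{r-1} : \normeI \leqslant \delta \}$ is compact, and by Proposition \ref{Prop:densite_continue_0} the map $\manqueI \mapsto \pi_i(\mu_i \,|\, \bs{y}, \manqueI)$ is continuous on all of $K$, the boundary point $\manqueI = \bs{0}_{r-1}$ included. This map is strictly positive on $K$: at interior and partial-boundary points it is the ratio of the strictly positive, finite numerator $L(\bs{y}\,|\,\bs{\mu}) f_i(\mu_i \,|\, \manqueI)$ --- positivity holding because $\mu_i > 0$ together with coordinate-distinctness of the design keeps $\coIrr$ positive definite --- to the finite strictly positive normalising integral, while at $\manqueI = \bs{0}_{r-1}$ strict positivity is exactly the second assertion of Proposition \ref{Prop:densite_continue_0}. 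A continuous strictly positive function on the compact $K$ attains a strictly positive minimum $\tilde{m}_{i,\bs{y},\delta}(\mu_i) > 0$. Setting $m_{i,\bs{y}}(\mu_i) := \min( b_{i,\delta,\bs{y}}(\mu_i),\, \tilde{m}_{i,\bs{y},\delta}(\mu_i) )$ then yields a strictly positive bound valid for every $\manqueI \in (0,+\infty)^{r-1}$, since the two regions cover the whole range. Measurability of $\mu_i \mapsto m_{i,\bs{y}}(\mu_i)$ follows because $\tilde{m}_{i,\bs{y},\delta}$ is continuous in $\mu_i$ (a minimum over the fixed compact $K$ of the jointly continuous integrand) and $b_{i,\delta,\bs{y}}$ may be taken measurable.

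The one delicate point --- and the place where all the earlier spectral work pays off --- is the strict positivity of the density at $\manqueI = \bs{0}_{r-1}$. Without it, the conditional density could in principle degenerate as the remaining inverse correlation lengths vanish, and the compactness argument would deliver only a nonnegative, possibly vanishing, infimum. This is precisely what Proposition \ref{Prop:densite_continue_0} secures, and it is there, rather than in the present gluing argument, that the smoothness restrictions $\nu \in (0,1)\cup(1,2)\cup(2,3)$, the sample-size conditions, and the non-orthogonality of $\bs{y}$ to $k_\nu$ are genuinely consumed.
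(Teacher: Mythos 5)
Your proposal is correct and follows essentially the same route as the paper: the hyperplane $\mathcal{H}$ is taken to contain $k_\nu^\perp$ (whose codimension is at least $1$ by the sample-size conditions), the range of $\bs{\mu}_{-i}$ is split into a region bounded away from $\bs{0}_{r-1}$ handled by Proposition \ref{Prop:minoration_proba} and a compact neighborhood of $\bs{0}_{r-1}$ handled by the continuity and positivity supplied by Proposition \ref{Prop:densite_continue_0}, and $m_{i,\bs{y}}$ is obtained as the minimum of the two resulting bounds. Your explicit remark that strict positivity at $\bs{\mu}_{-i}=\bs{0}_{r-1}$ is the load-bearing ingredient, and your spelled-out positivity at partial-boundary points, are slightly more detailed than the paper's wording but not a different argument.
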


\begin{proof}
This proof consists in combining Proposition \ref{Prop:minoration_proba} and Proposition \ref{Prop:densite_continue_0}, which respectively deal with large and small values of $\normeI$. \medskip

Proposition \ref{Prop:minoration_proba} implies that for any $\bs{y} \in \R^n \setminus \{0\}^n$, for any $i \in [\![1,r]\!]$ and any $\mu_i \in (0,+\infty)$, there exists a compact neighborhood $N_i(\mu_i)$ of $\bs{0}_{r-1}$ within $[0,+\infty)^r$ such that 

\begin{equation}
\inf \{\pi(\mu_i | \bs{y}, \manqueI) : \manqueI \in [0,+\infty)^{r-1} \setminus N_i(\mu_i)  \} > 0.
\end{equation}

The vector space $k_\nu \subset \R^n$ has dimension greater or equal to 

\begin{enumerate}[(a)]
\item $n-1$ if $\nu \in (0,1)$; 
\item $n-(r+1)$ if $\nu \in (1,2)$;
\item $n-(r+1)(r+2)/2$ if $\nu \in (2,3)$. 
\end{enumerate}

For all Simple Kriging models tackled by this lemma, the dimension of $k_\nu$ is therefore greater or equal to 1.
 Its orthogonal complement $k_\nu^{\perp}$ is then included within a hyperplane $\mathcal{H}$ of $\R^n$. Assuming $\bs{y} \in \R^n \setminus \mathcal{H} $, Proposition \ref{Prop:densite_continue_0} ensures that
$\pi(\mu_i | \bs{y}, \manqueI)$ is a continuous and positive function of $\bs{\mu}$ on $\{\bs{\mu} \in [0,+\infty)^r : \mu_i \neq 0 \}$. In particular, this implies that for any $\mu_i \in (0,+\infty)$ and any compact neighborhood $N_i(\mu_i)$ of $\bs{0}_{r-1}$ within $[0,+\infty)^r$, 

\begin{equation}
\inf \{\pi(\mu_i | \bs{y}, \manqueI) : \manqueI \in N_i(\mu_i)  \} > 0.
\end{equation}

Putting this together, if $\bs{y} \in \R^n \setminus \mathcal{H}$, for any $i \in [\![1,r]\!]$ and any $\mu_i \in (0,+\infty)$,

\begin{equation}
\inf \{\pi(\mu_i | \bs{y}, \manqueI) : \manqueI \in [0,+\infty)^{r-1}  \} > 0.
\end{equation}

The mapping $m_{i,\bs{y}}:\mu_i \mapsto \inf \{\pi(\mu_i | \bs{y}, \manqueI) : \manqueI \in [0,+\infty)^{r-1}  \}$, which is measurable on $(0,+\infty)$ is therefore also positive on $(0,+\infty)$.

\end{proof}

\begin{proof}[Proof of Theorem \ref{Thm:existence_posterior_gibbs}]

Lemma \ref{Lem:implique_theoreme} implies that $\forall \bs{\mu}^{(0)} \in (0,+\infty)^r$,

$$ P_{\bs{y}}(\bs{\mu}^{(0)}, d \bs{\mu}) \geqslant \frac{1}{r} \sum_{i=1}^r m_{i,\bs{y}} (\mu_i) d \mu_i \;  \delta_{\manqueI^{(0)}} (d \manqueI), $$

and thus $\forall \bs{\mu}^{(0)} \in (0,+\infty)^r$, $\forall n \geqslant r$,

$$ P_{\bs{y}}^n(\bs{\mu}^{(0)}, d \bs{\mu}) \geqslant \frac{1}{r^r} \prod_{i=1}^r m_{i,\bs{y}} (\mu_i) d \mu_i. $$

Defining $f_{\bs{y}}(\bs{\mu}) := r^{-r} \prod_{i=1}^r m_{i,\bs{y}} (\mu_i)$, $f_{\bs{y}}$ is a measurable positive function. Therefore $f_{\bs{y}}$ is the density with respect to the Lebesgue measure of a positive measure with mass $\epsilon_{\bs{y}}>0$. So $\epsilon_{\bs{y}}^{-1} f_{\bs{y}}$ is a probability density with respect to the Lebesgue measure and the Markov kernel $P_{\bs{y}}$ thus satisfies the uniform $(n,\epsilon_{\bs{y}})$ Doeblin condition:

\begin{equation}
 \forall \bs{\mu}^{(0)} \in (0,+\infty)^r \qquad P_{\bs{y}}^n(\bs{\mu}^{(0)}, d \bs{\mu}) \geqslant  \epsilon_{\bs{y}} \left( \frac{1}{\epsilon_{\bs{y}}} f_{\bs{y}} (\bs{\mu}) \right) d \bs{\mu} .
 \end{equation}

This implies that $P_{\bs{y}}$ is uniformly ergodic: it has a unique invariant probability distribution $\pi_G(\cdot | \bs{y})$ and $ \lim_{n \to \infty} \sup_{\bs{\mu}^{(0)} \in (0,+\infty)^r } \| P_{\bs{y}}^n (\bs{\mu}^{(0)},\cdot) - \pi_G(\cdot | \bs{y}) \|_{TV} = 0$, where $\| \cdot \|_{TV}$ is the total variation norm. By definition, $\pi_G(\cdot | \bs{y})$ is the Gibbs compromise between the incompatible posterior conditionals $\pi_i(\mu_i | \bs{y}, \manqueI)$.

\end{proof}

\end{appendices}

\pagebreak
\bibliography{biblio}
\bibliographystyle{plainnat}
\end{document}